\documentclass[11pt,twoside,a4paper]{article}
\usepackage{color,amscd,amsmath,amssymb,amsthm,latexsym,stmaryrd,mathdots,authblk,mathabx,shuffle,hyperref,tikz,tkz-tab} 
\usepackage[latin1]{inputenc}
\usepackage[T1]{fontenc}   
\usepackage[english]{babel}
\setlength{\textwidth}{16cm}
\setlength{\textheight}{25cm}
\topmargin = -25mm
\oddsidemargin = -1mm
\evensidemargin = 0mm
\providecommand{\keywords}[1]{\textbf{\textit{Keywords.}} #1}
\providecommand{\AMSclass}[1]{\textbf{\textit{AMS classification.}} #1}

\input{xy}
\xyoption{all}
\newcommand{\rond}[1]{*++[o][F-]{#1}}


\newcommand{\tun}{\begin{tikzpicture}[line cap=round,line join=round,>=triangle 45,x=0.5cm,y=0.5cm]
\clip(-0.2,-0.1) rectangle (0.2,0.2);
\begin{scriptsize}
\draw [fill=black] (0.,0.) circle (1pt);
\end{scriptsize}
\end{tikzpicture}}

\newcommand{\tdeux}{\begin{tikzpicture}[line cap=round,line join=round,>=triangle 45,x=0.5cm,y=0.5cm]
\clip(-.2,-.1) rectangle (0.2,0.7);
\draw [line width=.5pt] (0.,0.5)-- (0.,0.);
\begin{scriptsize}
\draw [fill=black] (0.,0.) circle (1pt);
\draw [fill=black] (0.,0.5) circle (1pt);
\end{scriptsize}
\end{tikzpicture}}

\newcommand{\ttroisun}{\begin{tikzpicture}[line cap=round,line join=round,>=triangle 45,x=0.5cm,y=0.5cm]
\clip(-0.5,-0.1) rectangle (0.5,0.7);
\draw [line width=0.5pt] (0.,0.)-- (-0.3,0.5);
\draw [line width=0.5pt] (0.,0.)-- (0.3,0.5);
\begin{scriptsize}
\draw [fill=black] (-0.3,0.5) circle (1pt);
\draw [fill=black] (0.,0.) circle (1pt);
\draw [fill=black] (0.3,0.5) circle (1pt);
\end{scriptsize}
\end{tikzpicture}}
\newcommand{\ttroisdeux}{\begin{tikzpicture}[line cap=round,line join=round,>=triangle 45,x=0.5cm,y=0.5cm]
\clip(-.2,-.1) rectangle (0.2,1.2);
\draw [line width=0.5pt] (0.,0.5)-- (0.,0.);
\draw [line width=0.5pt] (0.,0.5)-- (0.,1.);
\begin{scriptsize}
\draw [fill=black] (0.,0.) circle (1pt);
\draw [fill=black] (0.,0.5) circle (1pt);
\draw [fill=black] (0.,1.) circle (1pt);
\end{scriptsize}
\end{tikzpicture}}

\newcommand{\tquatreun}{\begin{tikzpicture}[line cap=round,line join=round,>=triangle 45,x=0.5cm,y=0.5cm]
\clip(-0.5,-0.1) rectangle (0.5,0.7);
\draw [line width=0.5pt] (0.,0.)-- (-0.3,0.5);
\draw [line width=0.5pt] (0.,0.)-- (0.3,0.5);
\draw [line width=0.5pt] (0.,0.)-- (0.,0.5);
\begin{scriptsize}
\draw [fill=black] (-0.3,0.5) circle (1.0pt);
\draw [fill=black] (0.,0.) circle (1.0pt);
\draw [fill=black] (0.3,0.5) circle (1.0pt);
\draw [fill=black] (0.,0.5) circle (1.0pt);
\end{scriptsize}
\end{tikzpicture}}
\newcommand{\tquatredeux}{\begin{tikzpicture}[line cap=round,line join=round,>=triangle 45,x=0.5cm,y=0.5cm]
\clip(-0.5,-0.1) rectangle (0.5,1.2);
\draw [line width=0.5pt] (0.,0.)-- (-0.3,0.5);
\draw [line width=0.5pt] (0.,0.)-- (0.3,0.5);
\draw [line width=0.5pt] (-0.3,0.5)-- (-0.3,1.);
\begin{scriptsize}
\draw [fill=black] (-0.3,0.5) circle (1.0pt);
\draw [fill=black] (0.,0.) circle (1.0pt);
\draw [fill=black] (0.3,0.5) circle (1.0pt);
\draw [fill=black] (-0.3,1.) circle (1.0pt);
\end{scriptsize}
\end{tikzpicture}}
\newcommand{\tquatretrois}{\begin{tikzpicture}[line cap=round,line join=round,>=triangle 45,x=0.5cm,y=0.5cm]
\clip(-0.5,-0.1) rectangle (0.5,1.2);
\draw [line width=0.5pt] (0.,0.)-- (-0.3,0.5);
\draw [line width=0.5pt] (0.,0.)-- (0.3,0.5);
\draw [line width=0.5pt] (0.3,0.5)-- (0.3,1.);
\begin{scriptsize}
\draw [fill=black] (-0.3,0.5) circle (1.0pt);
\draw [fill=black] (0.,0.) circle (1.0pt);
\draw [fill=black] (0.3,0.5) circle (1.0pt);
\draw [fill=black] (0.3,1.) circle (1.0pt);
\end{scriptsize}
\end{tikzpicture}}
\newcommand{\tquatrequatre}{\begin{tikzpicture}[line cap=round,line join=round,>=triangle 45,x=0.5cm,y=0.5cm]
\clip(-0.5,-0.1) rectangle (0.5,1.7);
\draw [line width=0.5pt] (0.,0.)-- (0.,0.5);
\draw [line width=0.5pt] (0.,0.5)-- (0.3,1.);
\draw [line width=0.5pt] (0.,0.5)-- (-0.3,1.);
\begin{scriptsize}
\draw [fill=black] (0.,0.) circle (1.0pt);
\draw [fill=black] (0.,0.5) circle (1.0pt);
\draw [fill=black] (-0.3,1.) circle (1.0pt);
\draw [fill=black] (0.3,1.) circle (1.0pt);
\end{scriptsize}
\end{tikzpicture}}
\newcommand{\tquatrecinq}{\begin{tikzpicture}[line cap=round,line join=round,>=triangle 45,x=0.5cm,y=0.5cm]
\clip(-.2,-.1) rectangle (0.5,1.7);
\draw [line width=0.5pt] (0.,0.)-- (0.,0.5);
\draw [line width=0.5pt] (0.,0.5)-- (0.,1.);
\draw [line width=0.5pt] (0.,1.)-- (0.,1.5);
\begin{scriptsize}
\draw [fill=black] (0.,0.) circle (1.0pt);
\draw [fill=black] (0.,0.5) circle (1.0pt);
\draw [fill=black] (0.,1.) circle (1.0pt);
\draw [fill=black] (0.,1.5) circle (1.0pt);
\end{scriptsize}
\end{tikzpicture}}


\newcommand{\tdun}[1]{\begin{tikzpicture}[line cap=round,line join=round,>=triangle 45,x=0.5cm,y=0.5cm]
\clip(-.2,-.1) rectangle (0.5,0.5);
\begin{scriptsize}
\draw [fill=black] (0.,0.) circle (1pt);
\end{scriptsize}
\draw(0.3,0.1) node {\tiny #1};
\end{tikzpicture}}

\newcommand{\tddeux}[2]{\begin{tikzpicture}[line cap=round,line join=round,>=triangle 45,x=0.5cm,y=0.5cm]
\clip(-.2,-.1) rectangle (0.5,1.);
\draw [line width=.5pt] (0.,0.5)-- (0.,0.);
\begin{scriptsize}
\draw [fill=black] (0.,0.) circle (1pt);
\draw [fill=black] (0.,0.5) circle (1pt);
\end{scriptsize}
\draw(0.3,0.1) node {\tiny #1};
\draw(0.3,0.6) node {\tiny #2};
\end{tikzpicture}}

\newcommand{\tdtroisun}[3]{\begin{tikzpicture}[line cap=round,line join=round,>=triangle 45,x=0.5cm,y=0.5cm]
\clip(-0.5,-0.1) rectangle (0.5,1.2);
\draw [line width=0.5pt] (0.,0.)-- (-0.3,0.5);
\draw [line width=0.5pt] (0.,0.)-- (0.3,0.5);
\begin{scriptsize}
\draw [fill=black] (-0.3,0.5) circle (1pt);
\draw [fill=black] (0.,0.) circle (1pt);
\draw [fill=black] (0.3,0.5) circle (1pt);
\end{scriptsize}
\draw(0.35,0.1) node {\tiny #1};
\draw(0.3,0.8) node {\tiny #2};
\draw(-0.3,0.8) node {\tiny #3};
\end{tikzpicture}}
\newcommand{\tdtroisdeux}[3]{\begin{tikzpicture}[line cap=round,line join=round,>=triangle 45,x=0.5cm,y=0.5cm]
\clip(-.2,-.1) rectangle (0.5,1.5);
\draw [line width=0.5pt] (0.,0.5)-- (0.,0.);
\draw [line width=0.5pt] (0.,0.5)-- (0.,1.);
\begin{scriptsize}
\draw [fill=black] (0.,0.) circle (1pt);
\draw [fill=black] (0.,0.5) circle (1pt);
\draw [fill=black] (0.,1.) circle (1pt);
\end{scriptsize}
\draw(0.3,0.1) node {\tiny #1};
\draw(0.3,0.6) node {\tiny #2};
\draw(0.3,1.1) node {\tiny #3};
\end{tikzpicture}}

\newcommand{\tdquatreun}[4]{\begin{tikzpicture}[line cap=round,line join=round,>=triangle 45,x=0.5cm,y=0.5cm]
\clip(-0.5,-0.1) rectangle (0.5,1.2);
\draw [line width=0.5pt] (0.,0.)-- (-0.3,0.5);
\draw [line width=0.5pt] (0.,0.)-- (0.3,0.5);
\draw [line width=0.5pt] (0.,0.)-- (0.,0.5);
\begin{scriptsize}
\draw [fill=black] (-0.3,0.5) circle (1.0pt);
\draw [fill=black] (0.,0.) circle (1.0pt);
\draw [fill=black] (0.3,0.5) circle (1.0pt);
\draw [fill=black] (0.,0.5) circle (1.0pt);
\end{scriptsize}
\draw(0.35,0.1) node {\tiny #1};
\draw(0.3,0.8) node {\tiny #2};
\draw(0.,0.8) node {\tiny #3};
\draw(-0.3,0.8) node {\tiny #4};
\end{tikzpicture}}

\newcommand{\tdquatrequatre}[4]{\begin{tikzpicture}[line cap=round,line join=round,>=triangle 45,x=0.5cm,y=0.5cm]
\clip(-0.5,-0.1) rectangle (0.5,1.7);
\draw [line width=0.5pt] (0.,0.)-- (0.,0.5);
\draw [line width=0.5pt] (0.,0.5)-- (0.3,1.);
\draw [line width=0.5pt] (0.,0.5)-- (-0.3,1.);
\begin{scriptsize}
\draw [fill=black] (0.,0.) circle (1.0pt);
\draw [fill=black] (0.,0.5) circle (1.0pt);
\draw [fill=black] (-0.3,1.) circle (1.0pt);
\draw [fill=black] (0.3,1.) circle (1.0pt);
\end{scriptsize}
\draw(0.3,0.1) node {\tiny #1};
\draw(0.3,0.5) node {\tiny #2};
\draw(0.3,1.3) node {\tiny #3};
\draw(-0.3,1.3) node {\tiny #4};
\end{tikzpicture}}

\newcommand{\ptroisun}{\begin{tikzpicture}[line cap=round,line join=round,>=triangle 45,x=0.5cm,y=0.5cm]
\clip(-0.5,-0.1) rectangle (0.5,0.7);
\draw [line width=0.5pt] (0.,0.5)-- (-0.3,0.);
\draw [line width=0.5pt] (0.,0.5)-- (0.3,0.);
\begin{scriptsize}
\draw [fill=black] (-0.3,0.) circle (1pt);
\draw [fill=black] (0.,0.5) circle (1pt);
\draw [fill=black] (0.3,0.) circle (1pt);
\end{scriptsize}
\end{tikzpicture}}

\newcommand{\pquatreun}{\begin{tikzpicture}[line cap=round,line join=round,>=triangle 45,x=0.5cm,y=0.5cm]
\clip(-0.5,-0.1) rectangle (0.5,0.7);
\draw [line width=0.5pt] (0.,0.5)-- (-0.3,0.);
\draw [line width=0.5pt] (0.,0.5)-- (0.3,0.);
\draw [line width=0.5pt] (0.,0.5)-- (0.,0.);
\begin{scriptsize}
\draw [fill=black] (-0.3,0.) circle (1.0pt);
\draw [fill=black] (0.,0.5) circle (1.0pt);
\draw [fill=black] (0.3,0.) circle (1.0pt);
\draw [fill=black] (0.,0.) circle (1.0pt);
\end{scriptsize}
\end{tikzpicture}}
\newcommand{\pquatredeux}{\begin{tikzpicture}[line cap=round,line join=round,>=triangle 45,x=0.5cm,y=0.5cm]
\clip(-0.5,-0.1) rectangle (0.5,1.2);
\draw [line width=0.5pt] (0.,1.)-- (-0.3,0.5);
\draw [line width=0.5pt] (0.,1.)-- (0.3,0.5);
\draw [line width=0.5pt] (-0.3,0.5)-- (-0.3,0.);
\begin{scriptsize}
\draw [fill=black] (-0.3,0.5) circle (1.0pt);
\draw [fill=black] (0.,1.) circle (1.0pt);
\draw [fill=black] (0.3,0.5) circle (1.0pt);
\draw [fill=black] (-0.3,0.) circle (1.0pt);
\end{scriptsize}
\end{tikzpicture}}
\newcommand{\pquatretrois}{\begin{tikzpicture}[line cap=round,line join=round,>=triangle 45,x=0.5cm,y=0.5cm]
\clip(-0.5,-0.1) rectangle (0.5,1.2);
\draw [line width=0.5pt] (0.,1.)-- (-0.3,0.5);
\draw [line width=0.5pt] (0.,1.)-- (0.3,0.5);
\draw [line width=0.5pt] (0.3,0.5)-- (0.3,0.);
\begin{scriptsize}
\draw [fill=black] (-0.3,0.5) circle (1.0pt);
\draw [fill=black] (0.,1.) circle (1.0pt);
\draw [fill=black] (0.3,0.5) circle (1.0pt);
\draw [fill=black] (0.3,0.) circle (1.0pt);
\end{scriptsize}
\end{tikzpicture}}
\newcommand{\pquatrequatre}{\begin{tikzpicture}[line cap=round,line join=round,>=triangle 45,x=0.5cm,y=0.5cm]
\clip(-0.5,-0.1) rectangle (0.5,1.7);
\draw [line width=0.5pt] (0.,1.)-- (0.,0.5);
\draw [line width=0.5pt] (0.,0.5)-- (0.3,0.);
\draw [line width=0.5pt] (0.,0.5)-- (-0.3,0.);
\begin{scriptsize}
\draw [fill=black] (0.,1.) circle (1.0pt);
\draw [fill=black] (0.,0.5) circle (1.0pt);
\draw [fill=black] (-0.3,0.) circle (1.0pt);
\draw [fill=black] (0.3,0.) circle (1.0pt);
\end{scriptsize}
\end{tikzpicture}}
\newcommand{\pquatrecinq}{\begin{tikzpicture}[line cap=round,line join=round,>=triangle 45,x=0.5cm,y=0.5cm]
\clip(-0.5,-0.1) rectangle (0.5,0.7);
\draw [line width=0.5pt] (-0.3,0.)-- (-0.3,0.5);
\draw [line width=0.5pt] (0.3,0.)-- (0.3,0.5);
\draw [line width=0.5pt] (-0.3,0.)-- (0.3,0.5);
\begin{scriptsize}
\draw [fill=black] (-0.3,0.) circle (1pt);
\draw [fill=black] (0.3,0.) circle (1pt);
\draw [fill=black] (-0.3,0.5) circle (1pt);
\draw [fill=black] (0.3,0.5) circle (1pt);
\end{scriptsize}
\end{tikzpicture}}
\newcommand{\pquatresix}{\begin{tikzpicture}[line cap=round,line join=round,>=triangle 45,x=0.5cm,y=0.5cm]
\clip(-0.5,-0.1) rectangle (0.5,0.7);
\draw [line width=0.5pt] (-0.3,0.)-- (-0.3,0.5);
\draw [line width=0.5pt] (0.3,0.)-- (0.3,0.5);
\draw [line width=0.5pt] (0.3,0.)-- (-0.3,0.5);
\begin{scriptsize}
\draw [fill=black] (-0.3,0.) circle (1pt);
\draw [fill=black] (0.3,0.) circle (1pt);
\draw [fill=black] (-0.3,0.5) circle (1pt);
\draw [fill=black] (0.3,0.5) circle (1pt);
\end{scriptsize}
\end{tikzpicture}}
\newcommand{\pquatresept}{\begin{tikzpicture}[line cap=round,line join=round,>=triangle 45,x=0.5cm,y=0.5cm]
\clip(-0.5,-0.1) rectangle (0.5,0.7);
\draw [line width=0.5pt] (-0.3,0.)-- (-0.3,0.5);
\draw [line width=0.5pt] (0.3,0.)-- (0.3,0.5);
\draw [line width=0.5pt] (0.3,0.)-- (-0.3,0.5);
\draw [line width=0.5pt] (-0.3,0.)-- (0.3,0.5);
\begin{scriptsize}
\draw [fill=black] (-0.3,0.) circle (1pt);
\draw [fill=black] (0.3,0.) circle (1pt);
\draw [fill=black] (-0.3,0.5) circle (1pt);
\draw [fill=black] (0.3,0.5) circle (1pt);
\end{scriptsize}
\end{tikzpicture}}
\newcommand{\pquatrehuit}{\begin{tikzpicture}[line cap=round,line join=round,>=triangle 45,x=0.5cm,y=0.5cm]
\clip(-0.5,-0.1) rectangle (0.5,1.2);
\draw [line width=0.5pt] (0.,1.)-- (-0.3,0.5);
\draw [line width=0.5pt] (0.,1.)-- (0.3,0.5);
\draw [line width=0.5pt] (-0.3,0.5)-- (0.,0.);
\draw [line width=0.5pt] (0.3,0.5)-- (0.,0.);
\begin{scriptsize}
\draw [fill=black] (-0.3,0.5) circle (1.0pt);
\draw [fill=black] (0.,1.) circle (1.0pt);
\draw [fill=black] (0.3,0.5) circle (1.0pt);
\draw [fill=black] (0.,0.) circle (1.0pt);
\end{scriptsize}
\end{tikzpicture}}

\newcommand{\pdtroisun}[3]{\begin{tikzpicture}[line cap=round,line join=round,>=triangle 45,x=0.5cm,y=0.5cm]
\clip(-0.7,-0.1) rectangle (0.7,1.2);
\draw [line width=0.5pt] (0.,0.5)-- (-0.3,0.);
\draw [line width=0.5pt] (0.,0.5)-- (0.3,0.);
\begin{scriptsize}
\draw [fill=black] (-0.3,0.) circle (1pt);
\draw [fill=black] (0.,0.5) circle (1pt);
\draw [fill=black] (0.3,0.) circle (1pt);
\end{scriptsize}
\draw(0.,0.8) node {\tiny #1};
\draw(-0.6,0.1) node {\tiny #2};
\draw(0.6,0.1) node {\tiny #3};
\end{tikzpicture}}

\definecolor{vert}{rgb}{0.,0.5,0.}

\title{Plane posets, special posets, and permutations}
\date{}
\author{Lo\"\i c Foissy}
\affil{\small{Univ. Littoral Côte d'Opale, UR 2597
LMPA, Laboratoire de Mathématiques Pures et Appliquées Joseph Liouville
F-62100 Calais, France}.\\ Email: \texttt{foissy@univ-littoral.fr}}

\theoremstyle{plain}
\newtheorem{theo}{Theorem}[section]
\newtheorem{lemma}[theo]{Lemma}
\newtheorem{cor}[theo]{Corollary}
\newtheorem{prop}[theo]{Proposition}
\newtheorem{defi}[theo]{Definition}

\theoremstyle{remark}
\newtheorem{remark}{Remark}[section]
\newtheorem{notation}{Notations}[section]
\newtheorem{example}{Example}[section]

\newcommand{\K}{\mathbb{K}}

\renewcommand{\geq}{\geqslant}
\renewcommand{\leq}{\leqslant}

\DeclareMathOperator{\tdelta}{\tilde{\Delta}}

\DeclareMathOperator{\Ker}{\mathrm{Ker}}

\DeclareMathOperator{\Char}{\mathrm{char}}
\DeclareMathOperator{\Card}{\mathrm{card}}

\newcommand{\DP}{\mathcal{DP}}
\newcommand{\PP}{\mathcal{PP}}
\newcommand{\WNP}{\mathcal{WNP}}
\newcommand{\PF}{\mathcal{PF}}
\newcommand{\OF}{\mathcal{OF}}
\newcommand{\HOF}{\mathcal{HOF}}
\newcommand{\HOP}{\mathcal{HOP}}
\newcommand{\SP}{\mathcal{SP}}
\newcommand{\SPP}{\mathcal{S}\PP}
\newcommand{\SWNP}{\mathcal{S}\WNP}
\newcommand{\SPF}{\mathcal{S}\PF}

\newcommand{\D}{\mathcal{D}}

\renewcommand{\P}{\mathcal{P}}
\newcommand{\h}{\mathcal{H}}

\renewcommand{\S}{\mathfrak{S}}

\newcommand{\FQSym}{\mathbf{FQSym}}
\newcommand{\PQSym}{\mathbf{PQSym}}

\newcommand{\prodg}{\rightsquigarrow}
\newcommand{\prodh}{\lightning}

\newcommand{\rondrond}[1]{*++[o][F--]{#1}}

\begin{document}

\maketitle

\begin{abstract}
We study the self-dual Hopf algebra $\h_{\SP}$ of special posets introduced by Malvenuto and Reutenauer
and the Hopf algebra morphism from $\h_{\SP}$ to the Hopf algebra of free quasi-symmetric functions $\FQSym$ given by linear extensions.
In particular, we construct two Hopf subalgebras both isomorphic to $\FQSym$; the first one is based on plane posets, the second one on heap-ordered forests. 
An explicit isomorphism between these two Hopf subalgebras is also defined, with the help of two combinatorial transformations on special posets. 
The restriction of the Hopf pairing of $\h_{\SP}$ to these Hopf subalgebras and others is also studied, as well as certain isometries between them.
These problems are solved using duplicial and dendriform structures. 

\end{abstract}

\keywords{Special posets; permutations; self-dual Hopf algebras; duplicial algebras; dendriform algebras.}\\

\AMSclass{06A11, 05A05, 16W30, 17A30.}

\tableofcontents

\section*{Introduction}

The Hopf algebra of double posets is introduced in \cite{MR2}. Recall that a \emph{double poset} is a finite set with two partial orders;
the set of isoclasses of double posets is given a structure of monoid, with a product called \emph{composition} (definition \ref{4}). The algebra of this monoid
is given a coassociative coproduct, with the help of the notion of \emph{ideal} of a double poset. We then obtain a graded, connected Hopf algebra,
non commutative and non cocommutative. This Hopf algebra $\h_{\DP}$ is self-dual: it has a nondegenerate Hopf pairing
$\langle-,-\rangle$, such that the pairing of two double posets is given by the number of \emph{pictures} between these double posets
(definition \ref{6}); see \cite{Foissy1} for more details on the nondegeneracy of this pairing. 

Other algebraic structures are constructed on $\h_{\DP}$ in \cite{Foissy1}.  In particular, a second product is defined on $\h_{\DP}$,
making it a free $2$-$As$ Hopf algebra \cite{LR1}. As a consequence, this object is closely related to operads and the theory of combinatorial Hopf algebras
\cite{LR2}. In particular, it contains the free $2$-$As$ algebra on one generator: this is the Hopf subalgebra $\h_{\WNP}$ of \emph{WN posets},
see definition \ref{3}. Another interesting Hopf subalgebra $\h_{\PP}$ is given by \emph{plane posets}, that is to say double poset with a particular
condition of (in)compatibility between the two orders (definition \ref{2}). \\

We investigate in the present text the algebraic properties of the family of \emph{special posets}, that is to say double posets such that the second order is total
 \cite{MR2}. They generate a Hopf subalgebra of $\h_{\DP}$ denoted by $\h_{\SP}$. 
For example, as explained in \cite{Foissy1}, the two partial orders of a plane poset allow to define a third, total order, so plane posets can also be considered 
as special posets: this defines an injective morphism of Hopf algebras from $\h_{\PP}$ to $\h_{\SP}$. Its image is denoted by $\h_{\SPP}$.
Another interesting Hopf subalgebra of $\h_{\SP}$  is generated by the set of \emph{ordered forests}; 
it is the Hopf algebra $\h_{\OF}$ used in \cite{Foissy3,Foissy7}.  A special poset is \emph{heap-ordered} if its second order (recall it is total) 
is a linear extension of the first one; these objects define another Hopf subalgebra $\h_{\HOP}$ of $\h_{\SP}$.  
Taking the intersections, we finally obtain a commutative diagram of six Hopf algebras:
\[\xymatrix{\h_{\SPP}\ar@{^(->}[r]&\h_{\HOP}\ar@{^(->}[r]&\h_{\SP}\\
\h_{\SPF}\ar@{^(->}[r]\ar@{^(->}[u]&\h_{\HOF}\ar@{^(->}[r]\ar@{^(->}[u]&\h_{\OF}\ar@{^(->}[u]}\]
The Hopf algebra $\h_{\HOF}$ of \emph{heap-ordered forests} is used in \cite{Foissy7}; 
$\h_{\SPF}$ is generated by the set of \emph{plane forests}, considered as special posets, and is isomorphic to the coopposite of the non commutative 
Connes-Kreimer Hopf algebra of plane forests $\h_{\SPF}$ \cite{Foissy4,Foissy5,Holtkamp}. 

A Hopf algebra morphism $\Theta$, from $\h_{\SP}$ to the Malvenuto-Reutenauer Hopf algebra of permutations $\FQSym$ \cite{MR1}, 
also known as the Hopf algebra of free quasi-symmetric functions \cite{Duchamp}, is defined in \cite{MR2}.
This construction uses the linear extensions of the first order of a special poset. The morphism $\Theta$ is surjective and respects the Hopf pairings 
defined on $\h_{\SP}$ and $\FQSym$. Moreover, its restrictions to $\h_{\SPP}$ and $\h_{\HOF}$ are isometric Hopf algebra isomorphisms (corollary \ref{22}).
In the particular case of $\h_{\SPP}$, this is proved using, first a bijection from the set of special plane posets of order $n$ to the $n$-th symmetric group $\S_n$ 
for all $n \geq 0$, then intervals in $\S_n$ for the right weak Bruhat order, see proposition \ref{21}.  As a consequence, we obtain a commutative diagram:
\[\xymatrix{\h_{\SPP}\ar@{_{(}->}[rd] \ar@{^{(}->>}[rrd]&&\\
&\h_{\SP}\ar@{->>}^{\hspace{-5mm}\Theta}[r]&\FQSym\\
\h_{\HOF}\ar@{^{(}->}[ru] \ar@{_{(}->>}[rru]&&}\]
We then complete this diagram with a Hopf algebra morphism $\Upsilon:\h_{\SP} \longrightarrow \h_{\HOF}$,
combinatorially defined (theorem \ref{25}), such that its restriction to $\h_{\SPP}$ gives the following commutative diagram:
\[\xymatrix{\h_{\SPP}\ar@{_{(}->}[rd] \ar@{^{(}->>}[rrd] \ar@{_{(}->>}[dd]_{\Upsilon}&&\\
&\h_{\SP}\ar@{->>}^{\hspace{-5mm}\Theta}[r]&\FQSym\\
\h_{\HOF}\ar@{^{(}->}[ru] \ar@{_{(}->>}[rru]&&}\]
The definition of $\Upsilon$ uses two transformations of special posets, summarized by
$\tddeux{$j$}{$i$}\longrightarrow \tdun{$i$}\tdun{$j$}-\tddeux{$i$}{$j$}$ and
$\pdtroisun{$k$}{$i$}{$j$}\longrightarrow \tddeux{$i$}{$k$} \tdun{$j$}-\tdtroisun{$i$}{$k$}{$j$}+\tdtroisdeux{$i$}{$j$}{$k$}$.

In order to prove the cofreeness of $\h_{\SPF}$, $\h_{\SP}$, $\h_{\HOP}$, $\h_{\SPP}$, $\h_{\OF}$ and $\h_{\SWNP}$,
we introduce a new product $\nwarrow$ on $\h_{\SP}$ making it a \emph{duplicial algebra} \cite{Loday2}, and two non associative coproducts $\Delta_\prec$
and $\Delta_\succ$, making it a \emph{dendriform coalgebra} \cite{Loday1,Ronco}, see paragraph \ref{sect51}. 
These two complementary structures are compatible, and $\h_{\SP}$ is a \emph{Dup-Dend bialgebra} \cite{Foissy3}. 
By the theorem of rigidity for Dup-Dend bialgebras, all these objects are isomorphic to non-commutative Connes-Kreimer Hopf algebras
of decorated plane forests \cite{Foissy4,Foissy5,Holtkamp} (note that this result was obvious for $\h_{\SPF}$), so are free and cofree. 
Moreover, it is possible to define a Dup-Dend structure on $\FQSym$ 
in such a way that the Hopf algebra morphism $\Theta$ becomes a morphism of Dup-Dend bialgebras.
Dendriform structures are also used to show that the restriction of the pairing of $\h_{\DP}$ on $\h_{\SPF}$ is nondegenerate,
with the help of \emph{bidendriform bialgebras} \cite{Foissy6}: in fact, the pairing of $\h_{\SP}$ restricted to $\h_{\SPF}$
respects a certain bidendriform structure. 

In the seventh section, we construct an isometric Hopf algebra morphism between $\h_{\PP}$ and $\h_{\SPP}$.
These two Hopf algebras are clearly isomorphic, with a very easily defined isomorphism, which is not an isometry. 
We prove that these two objects are isometric as Hopf algebras up to two conditions on the base field: it should be not of characteristic two
and should contain a square root of $-1$. This is done using the freeness and cofreeness of $\h_{\PP}$ and 
manipulations of symmetric matrices. \\

This text is organized as follows. The first section recalls the concepts and notations on the Hopf algebra of double posets $\h_{\DP}$.
The second section introduces special posets, heap-ordered posets, special plane posets and the other families of double posets here studied. 
The bijection between the set of special plane posets of order $n$ and $\S_n$ is defined in the third section.
The properties of the morphism $\Theta$ from $\h_{\SP}$ to $\FQSym$ are investigated in the next section.
In particular, it is proved that its restrictions to $\h_{\SPP}$ or $\h_{\HOF}$ are isomorphisms, and the induced isomorphism
from $\h_{\SPP}$ to $\h_{\HOF}$ is combinatorially defined. The fifth and sixth sections introduce duplicial, dendriform and bidendriform structures
and gives applications of these algebraic objects on our families of posets. The problem of finding an isometry from $\h_{\SPP}$ to $\h_{\PP}$ 
is studied in the seventh section; all the obtained results are summarized up in the conclusion. \\

\textbf{Acknowledgements}. The author warmly thanks Darij Grinberg for pointing an error in the preceding version of the paper,
on a lemma on symmetric integral matrices. The proofs of the last section have been changed accordingly.

\begin{notation}
\begin{enumerate}
\item $\K$ is a commutative field. Any algebra, coalgebra, Hopf algebra\ldots of the present text will be taken over $\K$.
\item If $\h=(\h,m,1,\Delta,\varepsilon,S)$ is a Hopf algebra, we shall denote by $\h^+$ its augmentation ideal, that is to say $\Ker(\varepsilon)$.
This ideal $\h^+$ has a coassociative, non counitary coproduct $\tdelta$, defined by $\tdelta(x)=\Delta(x)-x\otimes 1-1\otimes x$ for all $x\in \h^+$.
\item For all $n\geq 1$, $\S_n$ is the $n$-th symmetric group. Any element $\sigma$ of $\S_n$ will be represented by the word $\sigma(1)\ldots \sigma(n)$.
By convention, $\S_0$ is a group with a single element, denoted by the empty word $1$.
\end{enumerate}\end{notation}

\section{Reminders on double posets}

\subsection{Several families of double posets}

\begin{defi}\label{1} \textnormal{
\cite{MR2}. A \textit{double poset} is a triple $(P,\leq_1,\leq_2)$, where $P$ is a finite set and $\leq_1$, $\leq_2$ are two partial orders on $P$.
The set of isoclasses of double posets will be denoted by $\DP$. 
The set of isoclasses of double posets of cardinality $n$ will be denoted by $\DP(n)$ for all $n\in \mathbb{N}$.
}\end{defi}

\begin{remark}
Let $P\in \DP$. Then any subset $Q \subseteq P$ inherits also two partial orders by restriction, so is also a double poset:
we shall speak in this way of \emph{double subposets}.
\end{remark}

\begin{defi}\label{2}\textnormal{
A \textit{plane poset} is a double poset $(P,\leq_h,\leq_r)$ such that for all $x, y\in P$ with $x\neq y$, $x$ and $y$ are comparable for $\leq_h$ if, 
and only if, $x$ and $y$ are not comparable for $\leq_r$. The set of isoclasses of plane posets will be denoted by $\PP$. 
For all $n \in \mathbb{N}$, the set of isoclasses of plane posets of cardinality $n$ will be denoted by $\PP(n)$.
}\end{defi}

If $(P,\leq_h,\leq_r)$ is a plane poset, we shall represent the Hasse graph of $(P,\leq_h)$ 
such that $x <_r y$ in $P$, if and only if $y$ is more on the right than $x$ in the graph. Because of the incompatibility condition between the two orders,
this is a faithful representation of plane posets. For example, let us consider the two following Hasse graphs:
\[\xymatrix{a\ar@{-}[d]\ar@{-}[dr]&b\ar@{-}[d]\\c&d}\hspace{1cm}\xymatrix{b\ar@{-}[d]&a\ar@{-}[d]\ar@{-}[dl]\\d&c}\]
The first one represents the plane poset $(P,\leq_h,\leq_r)$ such that:
\begin{itemize}
\item $\{(x,y) \in P^2\mid x<_h y\}=\{(c,a),(d,a),(d,b)\}$,
\item $\{(x,y) \in P^2\mid x<_r y\}=\{(a,b),(c,b),(c,d)\}$,
\end{itemize}
whereas the second one represents the plane poset $(Q,\leq_h,\leq_r)$ such that:
\begin{itemize}
\item $\{(x,y) \in Q^2\mid x<_h y\}=\{(c,a),(d,a),(d,b)\}$,
\item $\{(x,y) \in Q^2\mid x<_r y\}=\{(b,a),(b,c),(d,c)\}$.
\end{itemize}

\begin{example} 
The empty double poset is denoted by $1$. 
\begin{align*}
\PP(0)&=\{1\},\\
\PP(1)&=\{\tun\},\\
\PP(2)&=\{\tun\tun,\tdeux\},\\
\PP(3)&=\{\tun\tun\tun,\tun\tdeux,\tdeux\tun,\ttroisun,\ttroisdeux,\ptroisun\},\\
\PP(4)&=\left\{\begin{array}{c}
\tun\tun\tun\tun,\tun\tun\tdeux,\tun\tdeux\tun,\tdeux\tun\tun,\tun\ttroisun,\ttroisun\tun,\tun\ttroisdeux,\ttroisdeux\tun,\tun\ptroisun,\ptroisun\tun,\tdeux\tdeux,\tquatreun,\\
\tquatredeux,\tquatretrois,\tquatrequatre,\tquatrecinq,\pquatreun,\pquatredeux,
\pquatretrois,\pquatrequatre,\pquatrecinq,\pquatresix,\pquatresept,\pquatrehuit\end{array}\right\}.
\end{align*}\end{example}

\begin{remark}Let $F$ be a plane forest. We defined in \cite{Foissy4} two partial orders on $F$, which makes it a plane poset:
\begin{itemize}
\item We orient the edges of the forest $F$ from the roots to the leaves. The obtained oriented graph is the Hasse graph of the partial order $\leq_h$.
In other words, if $x,y \in F$, $x\leq_h y$ if, and only if, there is an oriented path from $x$ to $y$ in $F$.
\item if $x,y$ are two vertices of $F$ which are not comparable for $\leq_h$, two cases can occur.
\begin{itemize}
\item If $x$ and $y$ are in two different trees of $F$, then one of these trees is more on the left than the other; this defines the order $\leq_r$ on $x$ and $y$.
\item If $x$ and $y$ are in the same tree $T$ of $F$, as they are not comparable for $\leq_h$ they are both different from the root of $T$.
We then compare them in the plane forest obtained by deleting the root of $T$. 
\end{itemize}
This inductively defines the order $\leq_r$ for any plane forest by induction on the number of vertices.
\end{itemize}
 Equivalently, a plane poset is a plane forest if, and only if its Hasse graph is a forest. The set of plane forests will be denoted by $\PF$; 
for all $n\geq 0$, the set of plane forests with $n$ vertices will be denoted by $\PF(n)$. For example:
\begin{align*}
\PF(0)&=\{1\},\\
\PF(1)&=\{\tun\},\\
\PF(2)&=\{\tun\tun, \tdeux\},\\
\PF(3)&=\{\tun\tun\tun,\tun\tdeux,\tdeux\tun,\ttroisun,\ttroisdeux\},\\
\PF(4)&=\left\{\tun\tun\tun\tun,\tun\tun\tdeux,\tun\tdeux\tun,\tdeux\tun\tun,\tun\ttroisun,\ttroisun\tun,
\tun\ttroisdeux,\ttroisdeux\tun,\tdeux\tdeux,\tquatreun,\tquatredeux,\tquatretrois,\tquatrequatre,\tquatrecinq\right\}.
\end{align*}\end{remark}

\begin{defi}\label{3}\textnormal{
Let $P$ be a double poset. We shall say that $P$ is \textit{WN} ("without N") if it is plane and does not contain any double subposet isomorphic
to $\pquatrecinq$ nor $\pquatresix$. The set of isoclasses of WN posets will be denoted by $\WNP$. 
For all $n \in \mathbb{N}$, the set of isoclasses of WN posets of cardinality $n$ will be denoted by $\WNP(n)$.
}\end{defi}

\begin{example}
\begin{align*}
\WNP(0)&=\{1\},\\
\WNP(1)&=\{\tun\},\\
\WNP(2)&=\{\tun\tun,\tdeux\},\\
\WNP(3)&=\{\tun\tun\tun,\tun\tdeux,\tdeux\tun,\ttroisun,\ttroisdeux,\ptroisun\},\\
\WNP(4)&=\left\{\begin{array}{c}
\tun\tun\tun\tun,\tun\tun\tdeux,\tun\tdeux\tun,\tdeux\tun\tun,\tun\ttroisun,\ttroisun\tun,\tun\ttroisdeux,\ttroisdeux\tun,\tun\ptroisun,\ptroisun\tun,\tdeux\tdeux,\tquatreun,\\
\tquatredeux,\tquatretrois,\tquatrequatre,\tquatrecinq,\pquatreun,\pquatredeux,\pquatretrois,\pquatrequatre,\pquatresept,\pquatrehuit\end{array}\right\}.
\end{align*}\end{example}

\begin{remark}
$\PF \subsetneq \WNP \subsetneq \PP$.
\end{remark}

\subsection{Products and coproducts of double posets}

\begin{defi}\label{4} \textnormal{
Let $P$ and $Q$ be two elements of $\DP$. We define $PQ\in \DP$ by:
\begin{itemize}
\item $PQ$ is the disjoint union of $P$ and $Q$ as a set.
\item $P$ and $Q$ are double subposets of $PQ$.
\item For all $x\in P$, $y\in Q$, $x \leq_2 y$ in $PQ$ and $x$ and $y$ are not comparable for $\leq_1$ in $PQ$.
\end{itemize}}\end{defi}

\begin{remark}\begin{enumerate}
\item This product is called \textit{composition} in \cite{MR2} and denoted by $\prodg$ in \cite{Foissy1}. 
\item The Hasse graph of $PQ$ (in the sense defined below)  is the concatenation of the Hasse graphs of $P$ and $Q$,
that is to say the disjoint union of these graphs, the graph of $P$ being on the left of the graph of $Q$. 
\end{enumerate}\end{remark}

This associative product is linearly extended to the vector space $\h_{\DP}$ generated by the set of double posets. Moreover, the subspaces $\h_{\PP}$,
$\h_{\WNP}$ and $\h_{\PF}$ respectively generated by the sets $\PP$, $\WNP$ and $\PF$ are  stable under this product.

\begin{defi}\label{5}\textnormal{\cite{MR2}. \begin{enumerate}
\item  Let $P=(P,\leq_1,\leq_2)$ be a double poset and let $I\subseteq P$. We shall say that $I$ is a \textit{$1$-ideal} of $P$ if:
\[\forall x\in I,\: \forall y\in P,\: (x \leq_1 y)\Longrightarrow (y\in I).\]
We shall write shortly "ideal" instead of "$1$-ideal" in the sequel.
\item The associative algebra $\h_{\DP}$ is given a Hopf algebra structure with the following coproduct: for any double poset $P$,
\[\Delta(P)=\sum_{\mbox{\scriptsize $I$ ideal of $P$}} (P\setminus I)\otimes I.\]
This Hopf algebra is graded by the cardinality of the double posets. 
\end{enumerate}}\end{defi}

As any double subposet of a, respectively, plane poset, WN poset, plane forest, is also a, respectively, plane poset, WN poset, plane forest,
$\h_{\PP}$, $\h_{\WNP}$ and $\h_{\PF}$ are Hopf subalgebras of $\h_{\DP}$. \\

\begin{example}
\begin{align*}
\tdelta(\tdeux)&=\tun\otimes \tun\\
\tdelta(\ttroisun)&=2\tdeux \otimes \tun+\tun\otimes \tun\tun\\
\tdelta(\ttroisdeux)&=\tun\otimes \tdeux+\tdeux \otimes \tun\\
\tdelta(\ptroisun)&=\tun\tun \otimes \tun+2\tun\otimes \tdeux\\
\tdelta(\tquatreun)&=\tun\otimes \tun\tun\tun+3\tdeux\otimes \tun\tun+3\ttroisun\otimes \tun\\
\tdelta(\tquatredeux)&=\ttroisdeux\otimes \tun+\ttroisun\otimes \tun+\tdeux\otimes \tdeux+\tdeux \otimes \tun\tun+\tun \otimes \tdeux\tun\\
\tdelta(\tquatretrois)&=\ttroisdeux\otimes \tun+\ttroisun\otimes \tun+\tdeux\otimes \tdeux+\tdeux \otimes \tun\tun+\tun \otimes \tun\tdeux\\
\tdelta(\tquatrequatre)&=2\ttroisdeux \otimes \tun+\tun\otimes \ttroisun+\tdeux \otimes \tun\tun\\
\tdelta(\tquatrecinq)&=\tun\otimes \ttroisdeux+\tdeux \otimes \tdeux+\ttroisdeux \otimes \tun\\
\tdelta(\pquatreun)&=\tun\tun\tun\otimes\tun+3\tun\tun\otimes\tdeux+3\tun \otimes \ptroisun\\
\tdelta(\pquatredeux)&=\tun\otimes\ttroisdeux + \tun\otimes\ptroisun+\tdeux\otimes \tdeux+\tun\tun \otimes \tdeux+\tdeux \tun \otimes \tun\\
\tdelta(\pquatretrois)&=\tun\otimes\ttroisdeux +\tun\otimes\ptroisun+\tdeux\otimes \tdeux+\tun\tun\otimes \tdeux +\tun \tdeux \otimes \tun\\
\tdelta(\pquatrequatre)&=2\tun \otimes \ttroisdeux +\ptroisun\otimes \tun+ \tun\tun \otimes \tdeux\\
\tdelta(\pquatrecinq)&=\tdeux\tun \otimes \tun+\ptroisun \otimes \tun+\tdeux \otimes \tdeux+\tun\tun\otimes \tun\tun+\tun\otimes \tun\tdeux+\tun \otimes \ttroisun\\
\tdelta(\pquatresix)&=\tun\tdeux \otimes \tun+\ptroisun \otimes \tun+\tdeux \otimes \tdeux+\tun\tun\otimes \tun\tun+\tun\otimes \tdeux \tun+\tun \otimes \ttroisun\\
\tdelta(\pquatresept)&=2\ptroisun\otimes \tun+2\tun \otimes \ttroisun+\tun\tun\otimes \tun\tun\\
\tdelta(\pquatrehuit)&=\ttroisun \otimes \tun+2\tdeux \otimes \tdeux+\tun\otimes \ptroisun
\end{align*}
\end{example}

\subsection{Hopf pairing on double posets}

\begin{defi}\label{6}\textnormal{\cite{MR2}\begin{enumerate}
\item For two double posets $P,Q$,  $S(P,Q)$ is the set of bijections $\sigma:P\longrightarrow Q$ such that, for all $i,j \in P$:
\begin{itemize}
\item ($i\leq_1 j$ in $P$) $\Longrightarrow$ ($\sigma(i) \leq_2 \sigma(j)$ in $Q$).
\item ($\sigma(i)\leq_1 \sigma(j)$ in $Q$) $\Longrightarrow$ ($i \leq_2 j$ in $P$).
\end{itemize}
These bijections are called \emph{pictures}.
\item We define a pairing on $\h_{\DP}$ by $\langle P,Q \rangle=Card(S(P,Q))$ for $P,Q \in \DP$. This pairing is a symmetric Hopf pairing.
\end{enumerate}}\end{defi}

It is proved in \cite{Foissy1} that this pairing is nondegenerate if, and only if, the characteristic of $\K$ is zero.
Moreover, the restriction of this pairing to $\h_{\PP}$, $\h_{\PF}$ or $\h_{\WNP}$ is nondegenerate, whatever the field $\K$ is.

\section{Several families of posets}

\subsection{Special posets}

\begin{defi}\textnormal{
\cite{MR2}. A double poset $P=(P,\leq_1,\leq_2)$ is \textit{special} if the order $\leq_2$ is total. The set of special double posets will be denoted by $\SP$.
The set of special double posets of cardinality $n$ will be denoted by $\SP(n)$.
}\end{defi}

This notion is equivalent to the notion of labeled posets. If $(P,\leq_1,\leq_2)$ is a special poset of order $n$, there is a unique isomorphism from
$(P,\leq_2)$ to $(\{1,\ldots,n\}, \leq)$, and we shall often identify them.\\

\begin{example}
We shall graphically represent a special poset $(P,\leq_1,\leq_2)$ by the Hasse graph of $(P,\leq_1)$, with indices on the vertices
giving the total order $\leq_2$.
\begin{enumerate}
\item Here are $\SP(n)$ for $n\leq 3$:
\begin{align*}
\SP(0)&=\{1\},\\
\SP(1)&=\{\tdun{$1$}\},\\
\SP(2)&=\{\tdun{$1$}\tdun{$2$},\tddeux{$1$}{$2$},\tddeux{$2$}{$1$}\},\\
\SP(3)&=\left\{\begin{array}{c}
\tdun{$1$}\tdun{$2$}\tdun{$3$},\tdun{$1$}\tddeux{$2$}{$3$},\tdun{$1$}\tddeux{$3$}{$2$},\tdun{$2$}\tddeux{$1$}{$3$},
\tdun{$2$}\tddeux{$3$}{$1$},\tdun{$3$}\tddeux{$1$}{$2$},\tdun{$3$}\tddeux{$2$}{$1$},\\
\tdtroisun{$1$}{$3$}{$2$},\tdtroisun{$2$}{$3$}{$1$},\tdtroisun{$3$}{$2$}{$1$},
\pdtroisun{$1$}{$2$}{$3$},\pdtroisun{$2$}{$1$}{$3$},\pdtroisun{$3$}{$1$}{$2$},
\tdtroisdeux{$1$}{$2$}{$3$},\tdtroisdeux{$1$}{$3$}{$2$},\tdtroisdeux{$2$}{$1$}{$3$},\tdtroisdeux{$2$}{$3$}{$1$},
\tdtroisdeux{$3$}{$1$}{$2$},\tdtroisdeux{$3$}{$2$}{$1$}
\end{array}\right\}.\end{align*}
\item See \cite{Foissy7}. \emph{Ordered forests} are special double posets. The set of ordered forests will be denoted by $\OF$.
The set of ordered forests of cardinality $n$ will be denoted by $\OF(n)$. For example:
\begin{align*}
\OF(0)&=\{1\},\\
\OF(1)&=\{\tdun{$1$}\},\\
\OF(2)&=\{\tdun{$1$}\tdun{$2$},\tddeux{$1$}{$2$},\tddeux{$2$}{$1$}\},\\
\OF(3)&=\left\{\begin{array}{c}
\tdun{$1$}\tdun{$2$}\tdun{$3$},\tdun{$1$}\tddeux{$2$}{$3$},\tdun{$1$}\tddeux{$3$}{$2$},\tdun{$2$}\tddeux{$1$}{$3$},
\tdun{$2$}\tddeux{$3$}{$1$},\tdun{$3$}\tddeux{$1$}{$2$},\tdun{$3$}\tddeux{$2$}{$1$},\\
\tdtroisun{$1$}{$3$}{$2$},\tdtroisun{$2$}{$3$}{$1$},\tdtroisun{$3$}{$2$}{$1$},
\tdtroisdeux{$1$}{$2$}{$3$},\tdtroisdeux{$1$}{$3$}{$2$},\tdtroisdeux{$2$}{$1$}{$3$},\tdtroisdeux{$2$}{$3$}{$1$},
\tdtroisdeux{$3$}{$1$}{$2$},\tdtroisdeux{$3$}{$2$}{$1$}
\end{array}\right\}.\end{align*}

\item Let $P=(P,\leq_h,\leq_r)$ be a plane poset. From proposition 11 in \cite{Foissy1}, the relation $\leq$ defined by $x \leq y$ if, and only if, $x\leq_h y$ 
or $x\leq_r y$, is a total order on $P$, called the \textit{induced total order} on $P$. So $(P,\leq_h,\leq)$ is also a special double poset: we can consider
plane posets as special posets. The set of plane posets, seen as special double posets, will be denoted by $\SPP$.
The set of plane posets of cardinality $n$, seen as special double posets, will be denoted by $\SPP(n)$. For example:
\begin{align*}
\SPP(0)&=\{1\},\\
\SPP(1)&=\{\tdun{$1$}\},\\
\SPP(2)&=\{\tdun{$1$}\tdun{$2$},\tddeux{$1$}{$2$}\},\\
\SPP(3)&=\left\{\begin{array}{c}
\tdun{$1$}\tdun{$2$}\tdun{$3$},\tdun{$1$}\tddeux{$2$}{$3$},\tddeux{$1$}{$2$}\tdun{$3$},
\tdtroisun{$1$}{$3$}{$2$},\pdtroisun{$3$}{$1$}{$2$},\tdtroisdeux{$1$}{$2$}{$3$}
\end{array}\right\}.\end{align*}

\item We define the set $\SPF$ of plane forests, seen as special posets, and the set $\SWNP$ of WN posets, seen as special posets.
Note that $\SPF=\OF \cap \SPP$. For example:
\begin{align*}
\SPF(0)&=\{1\},\\
\SPF(1)&=\{\tdun{$1$}\},\\
\SPF(2)&=\{\tdun{$1$}\tdun{$2$},\tddeux{$1$}{$2$}\},\\
\SPF(3)&=\left\{\begin{array}{c}
\tdun{$1$}\tdun{$2$}\tdun{$3$},\tdun{$1$}\tddeux{$2$}{$3$},\tddeux{$1$}{$2$}\tdun{$3$},
\tdtroisun{$1$}{$3$}{$2$},\tdtroisdeux{$1$}{$2$}{$3$}
\end{array}\right\}\end{align*}\end{enumerate}\end{example}

If $P$ and $Q$ are special double posets, then $PQ$ is also special. So the space $\h_{\SP}$ generated by special double posets
is a subalgebra of $(\h_{\DP},\prodg)$. Moreover, if $P$ is a special double poset, then any subposet of $P$ is also special.
As a consequence, $\h_{\SP}$ is a Hopf subalgebra of $\h_{\DP}$; this Hopf algebra also appears in \cite{Schocker}. 
Similarly, the spaces $\h_{\OF}$, $\h_{\SPP}$, $\h_{\SWNP}$ and $\h_{\SPF}$  
generated by $\OF$, $\SPP$, $\SWNP$ and $\SPF$ are Hopf subalgebras of $\h_{\DP}$. \\

\begin{remark}
It is clear that $\h_{\PP}$ and $\h_{\SPP}$ are isomorphic Hopf algebras, via the isomorphism sending the plane poset 
$(P,\leq_h,\leq_r)$ 
to the special poset $(P,\leq_h,\leq)$. The same argument works for $\h_{\WNP}$ and $\h_{\SWNP}$, and for $\h_{\PF}$ and $\h_{\SPF}$.
\end{remark}

\subsection{Heap-ordered posets}

\begin{defi}\textnormal{
Let $P=(P,\leq_1,\leq_2)$ be a special double poset. It is \textit{heap-ordered} if for all $x,y\in P$, $x\leq_1 y$ implies that $x\leq_2 y$.
The set of heap-ordered posets will be denoted by $\HOP$. The set of heap-ordered posets of cardinality $n$ will be denoted by $\HOP(n)$.
We put $\HOF=\HOP\cap \OF$ and $\HOF(n)=\HOP(n) \cap \OF(n)$ for all $n$.
}\end{defi}

\begin{example}
Here are the sets $\HOP(n)$ and $\HOF(n)$ for $n\leq 3$:
\begin{align*}
\HOP(1)&=\{\tdun{$1$}\},\\
\HOP(2)&=\{\tdun{$1$}\tdun{$2$},\tddeux{$1$}{$2$}\},\\
\HOP(3)&=\left\{\begin{array}{c}
\tdun{$1$}\tdun{$2$}\tdun{$3$},\tdun{$1$}\tddeux{$2$}{$3$},\tdun{$2$}\tddeux{$1$}{$3$},\tdun{$3$}\tddeux{$1$}{$2$},
\tdtroisun{$1$}{$3$}{$2$},\pdtroisun{$3$}{$1$}{$2$},\tdtroisdeux{$1$}{$2$}{$3$}
\end{array}\right\},\\ \\
\HOF(1)&=\{\tdun{$1$}\},\\
\HOF(2)&=\{\tdun{$1$}\tdun{$2$},\tddeux{$1$}{$2$}\},\\
\HOF(3)&=\left\{\begin{array}{c}
\tdun{$1$}\tdun{$2$}\tdun{$3$},\tdun{$1$}\tddeux{$2$}{$3$},\tdun{$2$}\tddeux{$1$}{$3$},\tdun{$3$}\tddeux{$1$}{$2$},
\tdtroisun{$1$}{$3$}{$2$},\tdtroisdeux{$1$}{$2$}{$3$}
\end{array}\right\}.\end{align*}\end{example}

Note that $\SPP \subsetneq \HOP$ and $\SPF \subsetneq \HOF$, as $\tdun{$2$}\tddeux{$1$}{$3$}$ is not a plane poset.
It is well-known that $|\HOF(n)|=n!$ for all $n\geq 0$.

If $P$ and $Q$ are two heap-ordered posets, then $PQ$ also is. As a consequence, the spaces $\h_{\HOP}$, $\h_{\HOF}$ and $\h_{\SPF}$
generated by $\HOP$, $\HOF$ and $\SPF$ are Hopf subalgebras of $\h_{\DP}$.
Moreover, plane posets are heap-ordered, so $\h_{\SPP}\subseteq \h_{\HOP}$. We obtain a commutative diagram of canonical injections:
\[\xymatrix{\h_{\SPP}\ar@{^(->}[r]&\h_{\HOP}\ar@{^(->}[r]&\h_{\SP}\\
\h_{\SPF}\ar@{^(->}[r]\ar@{^(->}[u]&\h_{\HOF}\ar@{^(->}[r]\ar@{^(->}[u]&\h_{\OF}\ar@{^(->}[u]}\]

\begin{prop}\label{9}\begin{enumerate}
\item Let $P \in \SP$. Then $P$ is heap-ordered if, and only if, it does not contain any double subposet isomorphic to $\tddeux{$2$}{$1$}$.
\item Let $P \in \SP$.  Then $P\in \SPP$ if, and only if, it does not contain any double subposet isomorphic to $\tddeux{$1$}{$3$} \tdun{$2$}$ 
nor $\tddeux{$2$}{$1$}$.
\end{enumerate} \end{prop}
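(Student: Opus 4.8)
The plan is to read (1) off the definitions, and to reduce (2) to the fact that a special poset carrying a plane structure carries a unique one, with the second order explicitly reconstructible. For (1): a two-element double subposet of a special poset $P$ lives on a pair $\{x,y\}$, which we may take with $x<_2 y$; up to isomorphism it is $\tdun{$1$}\tdun{$2$}$ when $x,y$ are $\leq_1$-incomparable, $\tddeux{$1$}{$2$}$ when $x<_1 y$, and $\tddeux{$2$}{$1$}$ when $y<_1 x$. The last case is exactly a pair witnessing $y\leq_1 x$ but $y\not\leq_2 x$, i.e.\ a failure of the heap-order condition. Hence $P$ is heap-ordered if and only if it contains no double subposet isomorphic to $\tddeux{$2$}{$1$}$.

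For (2), the key observation is that if $(P,\leq_h,\leq_r)$ is a plane poset with induced total order $\leq$, then by the incompatibility condition of Definition \ref{2} two distinct elements are $\leq_r$-comparable exactly when they are $\leq_h$-incomparable, and in that case $x<_r y$ holds iff $x<y$. So, writing $\leq_1=\leq_h$ and $\leq_2=\leq$, the second order $\leq_r$ is recovered from the special poset by: $x<_r y$ iff $x<_2 y$ and $x,y$ are $\leq_1$-incomparable. I would therefore, for an arbitrary $P=(P,\leq_1,\leq_2)\in\SP$, \emph{define} $\leq_r$ by this formula (with $x\leq_r x$) and prove that $P\in\SPP$ if and only if $(P,\leq_1,\leq_r)$ is a plane poset whose induced total order is $\leq_2$.

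This equivalence is then checked in two steps. If $P\in\SPP$, then $P$ is heap-ordered (by the very definition of the induced order, $x\leq_h y$ implies $x\leq y$), so by (1) it avoids $\tddeux{$2$}{$1$}$; and a copy of $\tddeux{$1$}{$3$} \tdun{$2$}$ on a triple $\{x,y,z\}$ with $x<_2 y<_2 z$, $x<_1 z$, and $y$ being $\leq_1$-incomparable to both $x$ and $z$ would force $x<_r y<_r z$, hence $x<_r z$ by transitivity of $\leq_r$, contradicting that $x$ and $z$ are $\leq_1$-comparable. Conversely, if $P$ avoids both subposets, I would verify that $\leq_r$ is a partial order: reflexivity and antisymmetry are immediate, while transitivity is the delicate point --- if $x<_r y<_r z$ and $x,z$ were $\leq_1$-comparable, then heap-orderedness (from (1)) excludes $z<_1 x$, so $x<_1 z$, and then $\{x,y,z\}$ is a copy of $\tddeux{$1$}{$3$} \tdun{$2$}$, a contradiction. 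The incompatibility condition between $\leq_1$ and $\leq_r$ follows from the definition of $\leq_r$ and the totality of $\leq_2$, so $(P,\leq_1,\leq_r)$ is a plane poset; by proposition~11 of \cite{Foissy1} it has an induced total order $\leq$, and $x<y$ implies $x<_2 y$ (the $\leq_1$-case using heap-orderedness, the $\leq_r$-case by definition), whence $\leq$ and $\leq_2$ coincide, both being total orders on the finite set $P$. So $P$, seen as a special poset, lies in $\SPP$.

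I do not expect a genuine obstacle here; once one notices that $\leq_r$ is forced, the rest is bookkeeping. The points to watch are that part (1) is really used inside part (2) --- both to exclude the case $z<_1 x$ in the transitivity step and to identify the induced order with $\leq_2$ --- and that one must invoke the result of \cite{Foissy1} recalled above, namely that $\leq_h$ and $\leq_r$ combine into a total order.
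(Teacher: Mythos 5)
Your proposal is correct and follows essentially the same route as the paper: part (1) by inspecting the two-element subposets, and the converse of part (2) by reconstructing $\leq_r$ as ``$x<_2 y$ and $x,y$ not $\leq_1$-comparable'' and using the forbidden subposet $\tddeux{$1$}{$3$}\tdun{$2$}$ precisely to establish transitivity of $\leq_r$. The only differences are cosmetic: you spell out the incompatibility condition and the identification of the induced total order with $\leq_2$, which the paper treats as immediate consequences of its (equivalent) definition of $\leq_r$.
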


\begin{proof} The first point is immediate. \\

$2.\Longrightarrow$. If $P \in \SPP$, then any subposet of $P$ belongs to $\SPP$. The conclusion comes
from the fact that $\tddeux{$1$}{$3$} \tdun{$2$}$ and $\tddeux{$2$}{$1$}$ are not special plane posets.\\

$2. \Longleftarrow$. By the first point, $P=(P,\leq_1,\leq_2)$ is heap-ordered. We define a relation $\leq_r$ on $P$ by:
\[x\leq_r y \mbox{ if }(x=y)\mbox{ or }\left((x<_2 y) \mbox{ and not } (x <_1 y)\right).\]
By definition, $x\leq_2 y$ if, and only if, $x\leq_1 y$ or $x\leq_r y$. Moreover, if $x$ and $y$ are comparable for both $\leq_1$ and $\leq_r$,
then $x=y$ by definition of $\leq_r$. It remains to prove that $\leq_r$ is a partial order on $P$.
If $x <_r y$ and $y<_r z$, then $x<_2 y<_2 z$, so $x<_2 z$, so $x<_1 z$ or $x<_r z$. If $x <_1 z$, then the subposet $\{x,y,z\}$ of $P$ 
is equal to $\tddeux{$1$}{$3$} \tdun{$2$}$, as $x,y$ and $y,z$ are not comparable for $\leq_1$: contradiction. So $x<_r z$. \end{proof}

\subsection{Pairing on special posets}

We restrict the pairing of $\h_{\DP}$ to $\h_{\SP}$. The matrix of the restriction of this pairing to $\h_{\SP}(2)$ is:
\[\begin{array}{c|c|c|c}
&\tdun{$1$}\tdun{$2$}&\tddeux{$1$}{$2$}&\tddeux{$2$}{$1$}\\
\hline \tdun{$1$}\tdun{$2$}&2&1&1\\
\hline \tddeux{$1$}{$2$}&1&1&0\\
\hline \tddeux{$2$}{$1$}&1&0&1
\end{array}\]

\begin{remark}
\begin{enumerate}
\item As a consequence, $\tdun{$1$}\tdun{$2$}-\tddeux{$1$}{$2$}-\tddeux{$2$}{$1$}$ is in the kernel of the pairing.
Hence, $\langle-,-\rangle_{\mid \h_{\SP}}$, $\langle-,-\rangle_{\mid \h_{\HOP}}$ and $\langle-,-\rangle_{\mid \h_{\OF}}$ are degenerate.
The kernels of these restrictions of the pairing are described in corollary \ref{19}.
\item A direct (but quite long) computation shows that the following element is in the kernel of  $\langle-,-\rangle_{\mid \h_{\SWNP}}$:
\begin{align*}
&\pquatrehuit-\tquatredeux-\tquatretrois+\tquatreun+\pquatresept-\pquatredeux-\pquatretrois\\
&+\ttroisdeux\tun-\ttroisun\tun+\pquatreun-\ptroisun\tun+\tdeux\tdeux+\tun\ttroisdeux-\tun\ttroisun-\tun\ptroisun+\tun\tdeux\tun.
\end{align*}
(We write here the double posets appearing in this element as plane poset, they have to be considered as special posets).
So $\langle-,-\rangle_{\mid \h_{\SWNP}}$ is degenerate.
\item We shall see that $\langle-,-\rangle_{\mid \h_{\HOF}}$, $\langle-,-\rangle_{\mid \h_{\SPP}}$ 
and $\langle-,-\rangle_{\mid \h_{\SPF}}$ are nondegenerate, see corollaries \ref{23}, \ref{26} and \ref{37}.
\end{enumerate}\end{remark}

\section{Links with permutations}

\subsection{Plane poset associated to a permutation}

\begin{prop}\label{10}
Let $\sigma\in \S_n$. We define two relations $\leq_h$ and $\leq_r$ on $\{1,\cdots,n\}$ by:
\begin{itemize}
\item ($i\leq_h j$) if ($i\leq j$ and $\sigma(i)\leq \sigma(j)$).
\item ($i\leq_r j$) if ($i\leq j$ and $\sigma(i)\geq \sigma(j)$).
\end{itemize}
Then $(\{1,\cdots,n\},\leq_h,\leq_r)$ is a plane poset. The induced total order on $\{1,\cdots,n\}$ is the usual total order.
\end{prop}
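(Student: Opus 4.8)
The plan is to check directly the three defining properties of a plane poset from Definition \ref{2}, and then identify the induced total order.

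First I would verify that $\leq_h$ and $\leq_r$ are both partial orders on $\{1,\ldots,n\}$. Reflexivity is immediate in each case. Antisymmetry is easy: both $i\leq_h j$ and $i\leq_r j$ already contain the condition $i\leq j$ for the usual order, so having $i\leq_h j$ and $j\leq_h i$ (resp. $i\leq_r j$ and $j\leq_r i$) forces $i\leq j$ and $j\leq i$, hence $i=j$. Transitivity of $\leq_h$ is just transitivity of $\leq$ applied simultaneously to the indices and to their images, $i\leq j\leq k$ and $\sigma(i)\leq\sigma(j)\leq\sigma(k)$; transitivity of $\leq_r$ is the same computation with $\sigma(i)\geq\sigma(j)\geq\sigma(k)$.

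Next I would check the incompatibility condition. Fix $x\neq y$, and after possibly exchanging them assume $x<y$ in the usual order. Since $\sigma$ is a bijection, $\sigma(x)\neq\sigma(y)$, so exactly one of $\sigma(x)<\sigma(y)$ and $\sigma(x)>\sigma(y)$ holds. In the first case $x\leq_h y$, so $x$ and $y$ are $\leq_h$-comparable; and they are not $\leq_r$-comparable, because $x\leq_r y$ would require $\sigma(x)\geq\sigma(y)$ while $y\leq_r x$ would require $y\leq x$, both false. The second case is symmetric, exchanging the roles of $\leq_h$ and $\leq_r$. Thus for every pair $x\neq y$ exactly one of "comparable for $\leq_h$" and "comparable for $\leq_r$" holds, which gives the equivalence required in Definition \ref{2}. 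Hence $(\{1,\ldots,n\},\leq_h,\leq_r)\in\PP$.

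Finally, for the induced total order: by the description of the induced order recalled in the text (proposition~11 of \cite{Foissy1}), $x\leq y$ in the induced order if and only if $x\leq_h y$ or $x\leq_r y$. Each of these implies $x\leq y$ in the usual order; conversely, if $x\leq y$ in the usual order, then according to whether $\sigma(x)\leq\sigma(y)$ or $\sigma(x)\geq\sigma(y)$ we get $x\leq_h y$ or $x\leq_r y$. So the induced total order coincides with the usual total order on $\{1,\ldots,n\}$. There is no serious obstacle here; the only point needing a little care is to invoke the injectivity of $\sigma$ in the incompatibility step, so that the two comparabilities are genuinely mutually exclusive and jointly exhaustive.
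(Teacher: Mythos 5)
Your proof is correct and follows essentially the same route as the paper's: verify the two relations are partial orders, use the bijectivity of $\sigma$ to show that distinct elements are comparable for exactly one of $\leq_h$, $\leq_r$, and observe that $x\leq_h y$ or $x\leq_r y$ is equivalent to $x\leq y$. The paper's version is just more terse; your write-up supplies the same details.
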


\begin{proof} It is clear that $\leq_h$ and $\leq_r$ are two partial orders on $\{1,\cdots,n\}$. It is immediate for any $i,j$, $i$ and $j$ are comparable 
for $\leq_h$ or $\leq_r$. Moreover, if $i$ and $j$ are comparable for both $\leq_h$ and $\leq_r$, then $\sigma(i)=\sigma(j)$, so $i=j$.
For all $i,j$, $i\leq_h j$ or $i\leq_r j$ if, and only if, $i\leq j$. \end{proof}

\begin{defi}\textnormal{Let $n\in \mathbb{N}$. We define a map:
\[\Phi_n:\left\{ \begin{array}{rcl}
\S_n&\longrightarrow & \PP(n)\\
\sigma&\longrightarrow &\left(\{1,\cdots,n\},\leq_h,\leq_r\right), \end{array}\right.\]
where $\leq_h$ and $\leq_r$ are defined in proposition \ref{10}.}
\end{defi}

\begin{example}
\begin{align*}
1&\longrightarrow\tun& 12&\longrightarrow\tdeux& 21&\longrightarrow\tun\tun\\
123&\longrightarrow\ttroisdeux& 132&\longrightarrow\ttroisun& 213&\longrightarrow\ptroisun\\
231&\longrightarrow\tdeux\tun& 312&\longrightarrow\tun\tdeux& 321&\longrightarrow\tun\tun\tun\\
1234&\longrightarrow\tquatrecinq& 1243&\longrightarrow\tquatrequatre& 1324&\longrightarrow\pquatrehuit\\
1342&\longrightarrow\tquatredeux& 1423&\longrightarrow\tquatretrois& 1432&\longrightarrow\tquatreun\\
2134&\longrightarrow\pquatrequatre& 2143&\longrightarrow\pquatresept& 2314&\longrightarrow\pquatredeux\\
2341&\longrightarrow\ttroisdeux\tun& 2413&\longrightarrow\pquatrecinq& 2431&\longrightarrow\ttroisun\tun\\
3124&\longrightarrow\pquatretrois& 3142&\longrightarrow\pquatresix& 3214&\longrightarrow\pquatreun\\
3241&\longrightarrow\ptroisun\tun& 3412&\longrightarrow\tdeux\tdeux& 3421&\longrightarrow\tdeux\tun\tun\\
4123&\longrightarrow\tun\ttroisdeux& 4132&\longrightarrow\tun\ttroisun& 4213&\longrightarrow\tun\ptroisun\\
4231&\longrightarrow\tun\tdeux\tun& 4312&\longrightarrow\tun\tun\tdeux& 4321&\longrightarrow\tun\tun\tun\tun
\end{align*}
\end{example}

We shall prove in the next section that $\Phi_n$ is bijective for all $n \geq 0$.

\subsection{Permutation associated to a plane poset}

We now construct the inverse bijection. For any $P\in \PP$, nonempty, we put:
\[\kappa(P)=\max(\{y\in P\:/\: \forall x\in P,\:x\leq y\Rightarrow x \leq_h y\}).\]
Note that $\kappa(P)$ is well-defined: the smallest element of $P$ for its total order belongs to the set $\{y\in P\:/\: \forall x\in P,\:x\leq y\Rightarrow x \leq_h y\}$.\\

Let $P \in \PP(n)$. Up to a unique increasing bijection, we can suppose that $P=\{1,\cdots,n\}$ as a totally ordered set:
we shall take this convention in this paragraph. We define an element $\sigma$ of $\S_n$ by:
\[ \left\{ \begin{array}{rcl}
\sigma^{-1}(n)&=&\kappa(P)\\
\sigma^{-1}(n-1)&=&\kappa\left(P-\{\sigma^{-1}(n)\}\right),\\
\vdots&&\vdots\\
\sigma^{-1}(1)&=&\kappa\left(P-\{\sigma^{-1}(n),\cdots, \sigma^{-1}(2)\}\right).
\end{array}\right.\]
This defines a map:
\[\Psi_n: \left\{ \begin{array}{rcl}
\PP(n)&\longrightarrow & \S_n\\
(P,\leq_h,\leq_r)&\longrightarrow & \sigma.
\end{array}\right.\]

\begin{lemma}
$\Psi_n \circ \Phi_n=Id_{\S_n}$.
\end{lemma}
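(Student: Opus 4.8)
The plan is to show that for every $\sigma \in \S_n$, applying $\Psi_n$ to the plane poset $P = \Phi_n(\sigma)$ recovers $\sigma$. By the definition of $\Psi_n$, this amounts to tracking the successive maxima computed by $\kappa$ as we strip vertices off $P$, and checking that they are exactly $\sigma^{-1}(n), \sigma^{-1}(n-1), \ldots, \sigma^{-1}(1)$ in that order. So the real content is a single claim about $\kappa$: for $P = \Phi_n(\sigma)$, one has $\kappa(P) = \sigma^{-1}(n)$, i.e. the position of the largest value of $\sigma$.

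First I would unwind what $\kappa(P)$ means for $P = \Phi_n(\sigma)$. Here the underlying set is $\{1,\ldots,n\}$ with its usual total order (this is the induced total order, by proposition \ref{10}), and $i \leq_h j$ iff $i \leq j$ and $\sigma(i) \leq \sigma(j)$. So the set $\{y : \forall x,\ x \leq y \Rightarrow x \leq_h y\}$ is $\{y : \forall x \leq y,\ \sigma(x) \leq \sigma(y)\}$, that is, the set of \emph{left-to-right maxima positions} of the word $\sigma(1)\cdots\sigma(n)$. The maximum of this set for the usual order is clearly the position $y$ where $\sigma$ attains its overall maximum value $n$, i.e. $y = \sigma^{-1}(n)$. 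This gives $\kappa(\Phi_n(\sigma)) = \sigma^{-1}(n)$.

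Next I would iterate. After removing the vertex $\sigma^{-1}(n)$ from $P$, I claim the remaining plane poset $P' = P \setminus \{\sigma^{-1}(n)\}$ is isomorphic (as a plane poset) to $\Phi_{n-1}(\sigma')$, where $\sigma' \in \S_{n-1}$ is the permutation obtained from $\sigma$ by deleting the entry equal to $n$ and relabelling the remaining positions $1,\ldots,n-1$ in order. Indeed, both $\leq_h$ and $\leq_r$ on $P$ are defined purely by comparing indices and comparing $\sigma$-values, and deleting the position carrying the value $n$ and order-isomorphically relabelling does not change which pairs satisfy $i \leq j \wedge \sigma(i) \leq \sigma(j)$ or $i \leq j \wedge \sigma(i) \geq \sigma(j)$. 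Once this reduction is in place, the proof finishes by induction on $n$: the base case $n = 0$ is trivial, and the inductive step is exactly the computation above applied to $P'$, so $\Psi_n(P)$ produces $\sigma^{-1}(n), \sigma^{-1}(n-1), \ldots$ in the correct order.

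The main obstacle, though a mild one, is making the deletion/relabelling step precise: one must be careful that $\kappa$ on $P'$ is being computed with respect to $P'$'s \emph{own} induced total order (the convention $P' = \{1,\ldots,n-1\}$), and check that the increasing bijection $P' \to \{1,\ldots,n-1\}$ intertwines the restricted orders $\leq_h, \leq_r$ on $P'$ with those of $\Phi_{n-1}(\sigma')$. This is routine once one observes that both the Hasse order and the right order in Definition of $\Phi$ depend only on the relative order of indices and the relative order of $\sigma$-values, both of which are preserved by an order isomorphism. No other serious difficulty arises; everything else is bookkeeping about left-to-right maxima.
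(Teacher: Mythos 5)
Your proof is correct and follows essentially the same route as the paper: identify the set defining $\kappa(\Phi_n(\sigma))$ as the set of positions of left-to-right maxima of the word $\sigma(1)\cdots\sigma(n)$, conclude $\kappa(\Phi_n(\sigma))=\sigma^{-1}(n)$, and iterate. The paper compresses the iteration into the phrase ``iterating this process,'' whereas you make it precise via deletion and standardization; that extra care is welcome but does not change the argument.
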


\begin{proof} Let $\sigma \in \S_n$. We put $P=\Phi_n(\sigma)$ and $\tau=\Psi_n(P)$. Then:
\[\{y\in P\:/\: \forall x\in P,\:x\leq y\Rightarrow x \leq_h y\}=\{j\in\{1,\cdots,n\}\:/\: \forall 1\leq i\leq n,\:i\leq j\Rightarrow \sigma(i) \leq \sigma(j)\}.\]
So $\tau^{-1}(n)=\kappa(P)=\sigma^{-1}(n)$. Iterating this process, we obtain $\sigma^{-1}=\tau^{-1}$, so $\sigma=\tau$. \end{proof}

\begin{lemma} \label{13}
Let $P\in \PP(n)$. We put $\Psi_n(P)=\sigma$. If $i \leq_h j$ in $P$, then $\sigma(i)\leq \sigma(j)$.
\end{lemma}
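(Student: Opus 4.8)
The plan is to reduce the statement to one structural fact about $\kappa$: \emph{for every plane poset $R$, the element $\kappa(R)$ is maximal for $\leq_h$ in $R$.} Granting this, the lemma follows quickly. Identify $P$ with $\{1,\dots,n\}$ carrying its induced total order, write $\sigma=\Psi_n(P)$, and recall that $\sigma^{-1}(k)=\kappa(P_k)$ where $P_n=P$ and $P_k=P\setminus\{\sigma^{-1}(n),\dots,\sigma^{-1}(k+1)\}$. Each $P_k$ is a double subposet of $P$, hence again a plane poset, and both its order $\leq_h$ and its induced total order are the restrictions of those of $P$ (for the latter: ``$x\leq_h y$ or $x\leq_r y$'' is unchanged by restriction, and this is exactly ``$x\leq y$''). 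Now suppose $i\leq_h j$ in $P$ with $i\neq j$ but, for contradiction, $\sigma(i)>\sigma(j)$. Put $k=\sigma(i)$; then $i=\sigma^{-1}(k)\in P_k$, and since $\sigma(j)<k$ the vertex $j$ is not among $\sigma^{-1}(n),\dots,\sigma^{-1}(k+1)$, so $j\in P_k$ as well. Thus $i=\kappa(P_k)$ while $i<_h j$ in $P_k$, contradicting the italicized fact applied to $R=P_k$. Hence $\sigma(i)\leq\sigma(j)$.

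Everything therefore comes down to proving that $\kappa(R)$ is $\leq_h$-maximal, and this is the real content. Set $G(R)=\{y\in R : \forall x\leq y,\ x\leq_h y\}$, so $\kappa(R)=\max G(R)$ and $G(R)\neq\emptyset$ since it contains the $\leq$-minimum of $R$. Write $m=\kappa(R)$ and suppose $m<_h j$ for some $j$; then $U=\{y\in R : m<_h y\}$ is non-empty, and I would take $z=\min U$ for the total order, noting $m<z$. The key claim is that $z\in G(R)$, which contradicts $z>m=\max G(R)$. To prove the claim, let $x<z$. If $x\leq m$, then $x\leq_h m$ because $m\in G(R)$, and $m<_h z$, so $x\leq_h z$. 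Otherwise $m<x$; since $R$ is plane, $x$ and $m$ are comparable for exactly one of $\leq_h,\leq_r$, and $m<_h x$ is impossible (it would give $x\in U$ with $x<z=\min U$), so $m<_r x$. Comparing $x$ and $z$ (plane, $x<z$): either $x\leq_h z$ and we are done, or $x<_r z$, whence $m<_r x<_r z$ forces $m<_r z$; but also $m<_h z$, so $m=z$ by the incompatibility axiom of plane posets, a contradiction. In every case $x\leq_h z$, so $z\in G(R)$, as wanted.

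I expect the only delicate step to be the claim ``$z\in G(R)$'' above, specifically the case $m<_r x$ where one must play off $\leq_h$ against $\leq_r$; the remainder is bookkeeping over the extraction steps. It seems worth isolating ``$\kappa(R)$ is $\leq_h$-maximal'' as a small auxiliary lemma, both here and because the next step of the paper (presumably identifying $\Phi_n\circ\Psi_n$ with the identity) should need comparable control over $\kappa$. No hypothesis on $K$ enters.
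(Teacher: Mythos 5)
Your proof is correct. It runs on the same underlying mechanism as the paper's argument --- contradicting the maximality in the definition of $\kappa$ by exhibiting an element of the set $\{y \mid \forall x\leq y,\ x\leq_h y\}$ strictly larger than $\kappa$ --- but the execution is genuinely different and, I think, cleaner. The paper works directly inside $P'=P\setminus\{\sigma^{-1}(n),\dots,\sigma^{-1}(k+1)\}$ with the given pair $(i,j)$ and proves that $j$ itself lies in the defining set of $\kappa(P')$; this forces an induction over all the elements lying strictly between $i$ and $j$ in the total order, with a three-way case split at each step. You instead isolate the reusable statement ``$\kappa(R)$ is $\leq_h$-maximal in any plane poset $R$'' and, crucially, choose as witness the $\leq$-\emph{minimal} element $z$ of $U=\{y\mid \kappa(R)<_h y\}$ rather than an arbitrary one: minimality kills the case $\kappa(R)<_h x$ outright, and the remaining case $x<_r z$ dies by transitivity of $\leq_r$ together with the incompatibility of $\leq_h$ and $\leq_r$ on the pair $(\kappa(R),z)$. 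This collapses the paper's induction into a single short case analysis. Your reduction step is also handled with the right care (both $i=\kappa(P_k)$ and $j$ survive in $P_k$ because $\sigma(j)<k=\sigma(i)$, and both $\leq_h$ and the induced total order restrict to double subposets), so nothing is missing.
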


\begin{proof}If $i=j$, this is obvious. Let us assume that $i<_h j$. We put $k=\sigma(i)$ and $l=\sigma(j)$. Then $k\neq l$. Let us assume that $k>l$. We then put:
\[P'=P\setminus \{\sigma^{-1}(n),\ldots, \sigma^{-1}(k+1)\}=\{i_1,\cdots,i_p,i,i_{p+1},\cdots,i_{p+q},j,i_{p+q+1},\cdots,i_{p+q+r}\},\]
with $i_1<\cdots<i_p<i<i_{p+1}<\cdots<i_{p+q}<j<i_{p+q+1}<\cdots<i_{p+q+r}$. Indeed, as $l<k<k+1$, both $\sigma^{-1}(k)=i$ 
and $\sigma^{-1}(l)=j$ belongs to this set. As $\kappa(P')=i$, $i_1,\cdots,i_p <_h i$. If $i\leq_h i_{p+1}$, then $\kappa(P')\geq i_{p+1}>i$: contradiction. 
So $i<_r i_{p+1}$. \\

Let us prove by induction on $s$ that $i_{p+s}\leq_h j$ for $1\leq s \leq q$. If $i_{p+1}\leq_r j$, then $i$ and $j$ would be comparable for $\leq_r$, 
so would not be comparable for $\leq_h$: contradiction. So $i_{p+1}\leq_h j$. Let us suppose that $i_{p+s-1} \leq_h j$, $1<s\leq q$. 
As $i_{p+s}<j$, $i_{p+s}<_h j$ or $i_{p+s}<_r j$. Let us assume that $i_{p+s}<_r j$. As $\kappa(P')=i<i_{p+s}$, there exists $x \in P'$, $x <_r i_{p+s}$.
By the induction hypothesis, $x \notin \{i_{p+1},\cdots,i_{p+s}\}$. As $i<_h j$, $x\neq i$, so $x\in \{i_1,\cdots,i_p\}$. But for such an $x$, $x<_h i<_h j$, 
so $x<_h j$: contradiction. So $i_{p+s}<_h j$. \\

Finally, we obtain that $i_1,\cdots,i_p,i,i_{p+1},\cdots,i_{p+q},j \leq_h j$, so $i=\kappa(P')\geq j$: contradiction, $i<j$. So $k<l$. \end{proof}

\begin{lemma}
$\Phi_n \circ \Psi_n=Id_{\PP_n}$.
\end{lemma}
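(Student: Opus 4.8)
The plan is to establish that $\Phi_n$ and $\Psi_n$ are mutually inverse by combining the previous lemma $\Psi_n \circ \Phi_n = \mathrm{Id}_{\S_n}$ with the surjectivity of $\Phi_n$, or alternatively by a direct argument using Lemma \ref{13}. Since we already know $\Psi_n \circ \Phi_n = \mathrm{Id}$, the map $\Phi_n$ is injective and $\Psi_n$ is surjective; to conclude $\Phi_n \circ \Psi_n = \mathrm{Id}_{\PP(n)}$ it suffices to show that $\Phi_n$ is surjective, equivalently that $\Psi_n$ is injective. Because $|\S_n| = n!$, this will follow once we show $|\PP(n)| \le n!$, i.e. that $\Psi_n$ is injective; but the cleanest route is a direct verification that $\Phi_n(\Psi_n(P)) = P$ for every $P \in \PP(n)$.

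First I would fix $P \in \PP(n)$, identify its underlying set with $\{1,\dots,n\}$ via the induced total order, and set $\sigma = \Psi_n(P)$, $Q = \Phi_n(\sigma)$. Both $P$ and $Q$ have underlying set $\{1,\dots,n\}$ with the usual order as induced total order, so it remains to show $\leq_h^P \,=\, \leq_h^Q$ (the orders $\leq_r$ then agree automatically by the incompatibility condition defining plane posets, Definition \ref{2}). By Proposition \ref{10}, $i \leq_h^Q j$ iff $i \le j$ and $\sigma(i) \le \sigma(j)$. Lemma \ref{13} gives exactly the inclusion $\leq_h^P \subseteq \leq_h^Q$: if $i \leq_h^P j$ then $i \le j$ (induced order) and $\sigma(i) \le \sigma(j)$, hence $i \leq_h^Q j$. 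So the whole content is the reverse inclusion: if $i \le j$ and $\sigma(i) \le \sigma(j)$, then $i \leq_h^P j$.

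For the reverse inclusion I would argue by contradiction: suppose $i < j$, $\sigma(i) < \sigma(j)$, but $i \not\leq_h^P j$; since $P$ is a plane poset and $i \ne j$, this forces $i <_r j$ in $P$. Put $k = \sigma(i)$, $l = \sigma(j)$, so $k < l$, and consider the subposet $P' = P \setminus \{\sigma^{-1}(n), \dots, \sigma^{-1}(l+1)\}$, which by construction of $\Psi_n$ satisfies $\kappa(P') = \sigma^{-1}(l) = j$. Thus every $x \in P'$ with $x \le j$ (induced order) satisfies $x \leq_h^P j$. Since $l > k$, the element $i = \sigma^{-1}(k)$ lies in $P'$, and $i < j$ in the induced order, so $i \leq_h^P j$ — contradicting $i <_r j$. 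This is essentially a mirror image of the argument in Lemma \ref{13}, and the main obstacle is just making sure the bookkeeping about which elements survive in $P'$ is correct: one must check that $\sigma^{-1}(l+1), \dots, \sigma^{-1}(n)$ are precisely the elements removed and that $i$ is not among them, which is immediate from $k = \sigma(i) < l$.

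Having shown $\leq_h^P = \leq_h^Q$, the incompatibility condition of Definition \ref{2} forces $\leq_r^P = \leq_r^Q$ as well (two distinct elements are $\leq_r$-comparable precisely when they are $\leq_h$-incomparable, in both posets), so $P = Q$, i.e. $\Phi_n \circ \Psi_n = \mathrm{Id}_{\PP(n)}$. Combined with the previous lemma, this shows $\Phi_n$ and $\Psi_n$ are mutually inverse bijections between $\S_n$ and $\PP(n)$. I expect the delicate point to be the contradiction step above; everything else (agreement of underlying sets and induced orders, reduction to $\leq_h$) is routine once Proposition \ref{10} and Lemma \ref{13} are in hand.
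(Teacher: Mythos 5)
Your proof is correct and follows essentially the same route as the paper: reduce to equality of the orders $\leq_h$, get one inclusion from Lemma \ref{13}, and get the reverse inclusion by observing that $i=\sigma^{-1}(k)$ with $k<l$ survives in $P'=P\setminus\{\sigma^{-1}(n),\dots,\sigma^{-1}(l+1)\}$ and that $\kappa(P')=j$ forces $i\leq_h j$. Framing that last step as a contradiction with $i<_r j$ is only a cosmetic difference from the paper's direct statement of the same argument.
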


\begin{proof} Let $P\in \PP_n$. We put $\sigma=\Psi_n(P)$ and $Q=\Phi_n(\sigma)$. As totally ordered sets, $P=Q=\{1,\cdots,n\}$. 
As they are both plane posets, it is enough to prove that $(P,\leq_h)=(Q,\leq_h)$. Let us suppose that $i \leq_h j$ in $P$. 
Then $i\leq j$ and $\sigma(i)\leq \sigma(j)$ by lemma \ref{13}. So $i\leq_h j$ in $Q$. Let us suppose that $i \leq_h j$ in $Q$. So $i\leq j$ and 
$\sigma(i)\leq \sigma(j)$. We put $k=\sigma(i)$ and $l=\sigma(j)$. As $k<l$: 
\[i \in P'=P-\{\sigma^{-1}(n),\cdots,\sigma^{-1}(l+1)\}.\]
By definition of $\kappa(P')=j$, $i\leq_h j$ in $P$ as $i\leq j$. \end{proof}

\begin{prop} 
$\Psi_n$ is a bijection, of inverse $\Phi_n$. As a consequence, $\Card(\PP(n))=n!$ for all $n\in \mathbb{N}$.
\end{prop}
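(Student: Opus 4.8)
The proposition is an immediate consequence of the three preceding lemmas, so the plan is simply to assemble them. First I would recall that a map between sets which admits both a left inverse and a right inverse is a bijection, and that these one-sided inverses must then coincide with the two-sided inverse. Here the lemma $\Psi_n \circ \Phi_n = \mathrm{Id}_{\S_n}$ shows that $\Phi_n$ is injective (it has a left inverse) and that $\Psi_n$ is surjective (it has a right inverse), while the lemma $\Phi_n \circ \Psi_n = \mathrm{Id}_{\PP(n)}$ shows that $\Phi_n$ is surjective and $\Psi_n$ is injective. Hence both $\Phi_n$ and $\Psi_n$ are bijections, and from $\Psi_n \circ \Phi_n = \mathrm{Id}$ and $\Phi_n \circ \Psi_n = \mathrm{Id}$ we conclude $\Psi_n = \Phi_n^{-1}$.

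For the cardinality statement, I would invoke the bijection $\Phi_n : \S_n \to \PP(n)$ just established together with the elementary fact that $|\S_n| = n!$ (true also for $n = 0$, where both sides are $1$), giving $\mathrm{card}(\PP(n)) = n!$ for all $n \in \mathbb{N}$.

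There is essentially no obstacle at this stage: all the genuine content has already been discharged in the lemmas, in particular in Lemma \ref{13}, whose order-preservation property is what makes the verification $\Phi_n \circ \Psi_n = \mathrm{Id}$ go through. The only point worth a line of care is the base case $n = 0$, where $\S_0$, $\PP(0)$ and $\PF(0)$ are all singletons and $\Phi_0$, $\Psi_0$ are the trivial maps between them, so the identities and the count hold vacuously.
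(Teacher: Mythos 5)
Your proposal is correct and matches the paper exactly: the proposition is stated without proof there precisely because it is the immediate consequence of the two preceding lemmas $\Psi_n \circ \Phi_n = \mathrm{Id}_{\S_n}$ and $\Phi_n \circ \Psi_n = \mathrm{Id}_{\PP(n)}$, combined with $|\S_n| = n!$. Your assembly of these facts, including the remark about the trivial case $n=0$, is exactly the intended argument.
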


Here are examples of properties of the bijection $\Psi_n$:

\begin{prop}
Let $P=(P,\leq_h,\leq_r) \in \PP(n)$. 
\begin{enumerate}
\item $n\cdots 1\circ \Psi_n(P)=\Psi_n((P,\leq_r,\leq_h))$.
\item $\Psi_n(P)^{-1}=\Psi_n((P,\leq_h,\geq_r))$.
\end{enumerate}\end{prop}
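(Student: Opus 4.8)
The plan is to reduce everything to the mutually inverse bijections $\Phi_n$ and $\Psi_n$ between $\S_n$ and $\PP(n)$, whose explicit description is already in hand. Write $w_0=n\cdots 1\in\S_n$ for the longest permutation, so that $w_0(k)=n+1-k$ and $(w_0\circ\sigma)(i)=n+1-\sigma(i)$. I will prove the two identities
$$\Phi_n(w_0\circ\sigma)=(\Phi_n(\sigma),\leq_r,\leq_h)\qquad\text{and}\qquad \Phi_n(\sigma^{-1})=(\Phi_n(\sigma),\leq_h,\geq_r),$$
as elements of $\PP(n)$, for every $\sigma\in\S_n$. Applying $\Psi_n$ to both sides and using $\Psi_n\circ\Phi_n=Id$ then turns these into exactly the two stated formulas, with $\sigma=\Psi_n(P)$. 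First note that both right-hand sides make sense: by Definition \ref{2} the incompatibility condition defining plane posets is symmetric in the two orders and unaffected by replacing an order with its opposite, so $(P,\leq_r,\leq_h)$ and $(P,\leq_h,\geq_r)$ are again plane posets.

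For the first identity I just unwind Proposition \ref{10}. Put $P=\Phi_n(\sigma)=(\{1,\dots,n\},\leq_h,\leq_r)$. Since $(w_0\circ\sigma)(i)\le(w_0\circ\sigma)(j)$ is equivalent to $\sigma(i)\ge\sigma(j)$, the $\leq_h$-order produced by $\Phi_n(w_0\circ\sigma)$ is $\{(i,j): i\le j,\ \sigma(i)\ge\sigma(j)\}=\leq_r$, and symmetrically its $\leq_r$-order is $\leq_h$. Hence $\Phi_n(w_0\circ\sigma)=(\{1,\dots,n\},\leq_r,\leq_h)=(\Phi_n(\sigma),\leq_r,\leq_h)$ literally, and its induced total order is still the usual order on $\{1,\dots,n\}$ because $\leq_h\cup\leq_r$ is symmetric; this matches the normalization under which $\Psi_n$ is defined.

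For the second identity, again with $P=\Phi_n(\sigma)$, the key observation is that the map $\phi:=\sigma$ is an isomorphism of double posets from $(P,\leq_h,\geq_r)$ onto $\Phi_n(\sigma^{-1})$. Indeed, denoting by $\leq_h'$, $\leq_r'$ the two orders of $\Phi_n(\sigma^{-1})$, one checks directly from Proposition \ref{10} that $\sigma(i)\leq_h'\sigma(j)$ holds iff $\sigma(i)\le\sigma(j)$ and $\sigma^{-1}(\sigma(i))\le\sigma^{-1}(\sigma(j))$, i.e. iff $i\le j$ and $\sigma(i)\le\sigma(j)$, i.e. iff $i\leq_h j$; and $\sigma(i)\leq_r'\sigma(j)$ holds iff $\sigma(i)\le\sigma(j)$ and $i\ge j$, i.e. iff $j\leq_r i$, i.e. iff $i\geq_r j$. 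Thus $\phi$ intertwines $\leq_h$ with $\leq_h'$ and $\geq_r$ with $\leq_r'$, so $(P,\leq_h,\geq_r)$ and $\Phi_n(\sigma^{-1})$ represent the same element of $\PP(n)$. Applying $\Psi_n$ and the previous proposition gives $\Psi_n((P,\leq_h,\geq_r))=\Psi_n(\Phi_n(\sigma^{-1}))=\sigma^{-1}=\Psi_n(P)^{-1}$.

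I do not expect a real obstacle here: the argument is a direct translation through the explicit formula for $\Phi_n$. The only point requiring care — and it is minor — is the bookkeeping of induced total orders: one must make sure that after transforming a plane poset and renormalizing its underlying set to $\{1,\dots,n\}$ with its own induced total order, the correspondence is still realized by the identity map in part 1 and by $\sigma$ in part 2, which is precisely what legitimizes reading off $\Psi_n$ of the transformed poset from the stated formulas.
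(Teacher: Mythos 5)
Your proof is correct and follows essentially the same route as the paper: part 1 is the direct computation showing $\Phi_n(n\cdots 1\circ\sigma)=(\Phi_n(\sigma),\leq_r,\leq_h)$, and part 2 exhibits $\sigma$ itself as an isomorphism of double posets from $(P,\leq_h,\geq_r)$ onto $\Phi_n(\sigma^{-1})$, exactly as in the paper. The extra remarks on the normalization of the underlying set are a harmless (and slightly more careful) elaboration of what the paper leaves implicit.
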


\begin{proof}  1. We put $\Psi_n(P)=\sigma=(a_1\cdots a_n)$. Then $n\cdots 1\circ \sigma=(n-a_1+1) \cdots (n-a_n+1)$.
We put $Q=\Phi_n(n\cdots 1\circ \sigma)$. For all $i,j\in \{1,\cdots n\}$:
\begin{align*}
i\leq_h j \mbox{ in }Q&\Longleftrightarrow i\leq j \mbox{ and } n-a_i+1\leq n-a_j+1\\
&\Longleftrightarrow i\leq j \mbox{ and } a_i\geq a_j\\
&\Longleftrightarrow  i\leq_r j \mbox{ in } P
\end{align*}
Similarly, $i\leq_r j$ in $Q$ if, and only if, $i\leq_h j$ in $P$. So $Q=(P,\leq_r,\leq_h)$. \\

2. We put $R=\Phi_n(\sigma^{-1})$. Let $i,j \in \{1,\cdots,n\}$.
\begin{align*}
\sigma(i)\leq_h \sigma(j) \mbox{ in }R&\Longleftrightarrow \sigma(i)\leq \sigma(j) \mbox{ and } i\leq j\\
&\Longleftrightarrow i\leq_h j\mbox{ in }P,\\ \\
\sigma(i)\leq_r \sigma(j) \mbox{ in }R&\Longleftrightarrow \sigma(i)\leq \sigma(j) \mbox{ and } i\geq j\\
&\Longleftrightarrow i\geq_r j\mbox{ in }P.
\end{align*}
So $\sigma:(P,\leq_h,\geq_r) \longrightarrow R$ is an isomorphism of plane posets. \end{proof}

\begin{remark}
In other terms, $n\cdots 1\circ \Psi_n(P)=\Psi_n\circ \iota(P)$, where the involution $\iota$ is defined in \cite{Foissy1}
by $\iota((P,\leq_h,\leq_r))=(P,\leq_r,\leq_h)$.
\end{remark}

\section{A morphism to $\FQSym$}

Note that $\h_{\PP}$, $\h_{\SPP}$ and $\FQSym$ are both free and cofree, with the same formal series.
From a result of \cite{Foissy2},  $\h_{\PP}$, hence $\h_{\SPP}$, is isomorphic to $\FQSym$. Our aim in this section is to define and study
an explicit isomorphism between $\h_{\SPP}$ and $\FQSym$.

\subsection{Reminders on $\FQSym$}

Let us first recall the construction of $\FQSym$ \cite{MR1,Duchamp}. As a vector space, a basis of $\FQSym$ is given by the disjoint union 
of the symmetric groups $\S_n$, for all $n \geq 0$. By convention, the unique element of $\S_0$ is denoted by $\emptyset$. The product of $\FQSym$ is given, 
for $\sigma \in \S_k$, $\tau \in \S_l$, by:
\[\sigma\tau=\sum_{\epsilon \in Sh(k,l)} (\sigma \otimes \tau) \circ \epsilon,\]
where $Sh(k,l)$ is the set of $(k,l)$-shuffles, that is to say permutations $\epsilon \in \S_{k+l}$ such that $\epsilon^{-1}(1)<\ldots <\epsilon^{-1}(k)$
and $\epsilon^{-1}(k+1)<\ldots<\epsilon^{-1}(k+l)$.
In other words, the product of $\sigma$ and $\tau$ is given by shifting the letters of the word
representing $\tau$ by $k$, and then summing all the possible shufflings of this word and of the word representing $\sigma$. For example:
\begin{align*}
132.21&=13254+13524+15324+51324+13542\\
&+15342+51342+15432+51432+54132.
\end{align*}

Let $\sigma \in \S_n$. For all $0\leq k \leq n$, there exists a unique triple 
$\left(\sigma_1^{(k)},\sigma_2^{(k)},\zeta_k\right)\in \S_k \times \S_{n-k} \times Sh(k,n-k)$
such that $\sigma=\zeta_k^{-1} \circ \left(\sigma_1^{(k)} \otimes \sigma_2^{(k)}\right)$. The coproduct of $\FQSym$ is then defined by:
\[\Delta(\sigma)=\sum_{k=0}^n \sigma_1^{(k)} \otimes \sigma_2^{(k)}.\]
For example:
\[\Delta(41325)=\emptyset \otimes 41325+1 \otimes 1324+21 \otimes 213+312\otimes 12+4132 \otimes 1+41325 \otimes \emptyset.\]
Note that $\sigma_1^{(k)}$ and $\sigma_2^{(k)}$ are obtained by cutting the word representing $\sigma$ between the $k$-th and the $k+1$-th letter,
and then \textit{standardizing} the two obtained words, that is to say applying to their letters the unique increasing bijection to $\{1,\ldots,k\}$ or $\{1,\ldots,n-k\}$.
Moreover, $\FQSym$ has a nondegenerate, homogeneous, Hopf pairing defined by $\langle \sigma,\tau\rangle=\delta_{\sigma,\tau^{-1}}$
for all permutations $\sigma$ and $\tau$.

\subsection{Linear extensions}

\begin{defi}\textnormal{
Let $P=(P,\leq_1,\leq_2)$ be a special poset. Let $x_1<_2\ldots<_2 x_n$ be the elements of $P$.
A \emph{linear extension} of $P$ is a permutation $\sigma \in \S_n$ such that, for all $i,j \in \{1,\ldots,n\}$:
\[(x_i\leq_1 x_j)\Longrightarrow (\sigma^{-1}(i)<\sigma^{-1}(j)).\]
The set of linear extensions of $P$ will be denoted by $S_P$.
}\end{defi}

\begin{remark}\begin{enumerate}
\item Let $P$ be a special poset. It is heap-ordered if, and only if, $Id_n \in S_P$.
\item Let $P$ be a special poset of cardinality $n$. By definition of the product of plane posets, the plane poset $\tun^n$,
seen as a special poset, has $n$ vertices. If $i\neq j$ in $\tun^n$, then $i$ and $j$ are not comparable for $\leq_h$.
We also identify $P$ and $\tun^n$ with $\{1,\ldots,n\}$ as totally ordered sets. If $\sigma$ is a bijection from $P$ to $\tun^n$, then $\sigma\in S(\tun^n,P)$ if, and only if,
$\sigma(i)<_h \sigma(j) $ in $P$ implies that to $i<j$. Hence, the set of linear extensions of $P$ is $S(\tun^n,P)$. 
\item Let $P$ be a special poset. We denote by $n$ its cardinality. As the second order of $P$ is total, we can identify $P$ with $\{1,\ldots,n\}$,
as totally ordered sets. By \cite{Trotter}, seeing orders on $P$ as elements of $P \times P$:
\[\{(x,y)\in P^2\mid x<_1y\}=\bigcap\{\ll\mid \ll \mbox{ total order extending }\leq_1\}.\]
We identify the total order $i_1\ll\ldots \ll i_n$ on $P$ with the permutation $i_1\ldots i_n$. Then permutations corresponding to total orders extending $\leq_1$
are precisely the elements of $S_P$. We obtain:
\[\{(x,y)\in P^2\mid x<_1y\}=\{(i,j) \in \{1,\ldots,n\}^2\mid \forall \sigma \in S_P, \sigma^{-1}(i)<\sigma^{-1}(j)\}.\]
So $S_P$ entirely determines $P$. 
\end{enumerate}\end{remark}

The following theorem is proved in  \cite{MR2}:

\begin{theo}
 The following map is a surjective morphism of Hopf algebras:
\[\Theta:\left\{\begin{array}{rcl}
\h_{\SP}&\longrightarrow&\FQSym\\
P\in \SP&\longrightarrow&\displaystyle \sum_{\sigma \in S_P} \sigma.
\end{array}\right.\]
Moreover, for any $x,y \in \h_{\SP}$, $\langle x,y \rangle=\langle \Theta(x),\Theta(y)\rangle_\FQSym$.
\end{theo}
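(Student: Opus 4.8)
The plan is to establish three things: that $\Theta$ is an algebra morphism, that it is a coalgebra morphism, and that it is compatible with the pairings; surjectivity and the Hopf-algebra (antipode-compatibility) statement then follow formally, the latter because any bialgebra morphism between graded connected bialgebras commutes with the antipode. Throughout I would use the identification, recorded in the Remarks after the definition of linear extensions, that $S_P = S(\tun^n,P)$, where $n = |P|$ and $\tun^n$ denotes the antichain on $n$ vertices seen as a special poset; this lets me translate statements about linear extensions into statements about pictures, and it will be the bridge to the pairing identity.

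\emph{Multiplicativity.} Given special posets $P \in \SP(k)$, $Q \in \SP(l)$, I would show directly that $S_{PQ}$ is in bijection with $\bigsqcup_{\epsilon \in Sh(k,l)} \{(\sigma\otimes\tau)\circ\epsilon : \sigma\in S_P,\ \tau\in S_Q\}$. Indeed, in $PQ$ the total order $\leq_2$ places all of $P$ before all of $Q$, and $x \leq_1 y$ in $PQ$ holds only when $x,y$ both lie in $P$ or both lie in $Q$ (with the induced order). So a linear extension of $PQ$ is exactly a way of interleaving a linear extension of $P$ with a shifted linear extension of $Q$ — which is precisely the combinatorial description of the product of $\FQSym$ recalled above. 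Hence $\Theta(PQ) = \Theta(P)\Theta(Q)$, and $\Theta(1) = \emptyset$ is clear.

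\emph{Comultiplicativity.} Here I would match the two sides of $\Delta \circ \Theta(P) = (\Theta\otimes\Theta)\circ\Delta(P)$ term by term. Fix $P \in \SP(n)$ and identify $(P,\leq_2)$ with $\{1,\ldots,n\}$. On the $\FQSym$ side, $\Delta(\Theta(P)) = \sum_{\sigma\in S_P}\sum_{k=0}^n \sigma_1^{(k)}\otimes\sigma_2^{(k)}$, where $\sigma_1^{(k)},\sigma_2^{(k)}$ are the standardizations of the prefix $\sigma(1)\ldots\sigma(k)$ and suffix $\sigma(k+1)\ldots\sigma(n)$. On the poset side, $\Delta(P) = \sum_{I}(P\setminus I)\otimes I$ over ideals $I$ of $\leq_1$. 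The key bijection is: a linear extension $\sigma$ of $P$ together with a cut position $k$ determines the ideal $I_{\sigma,k} = \{\sigma(k+1),\ldots,\sigma(n)\}$ (this is a $1$-ideal precisely because $\sigma$ is a linear extension, so nothing $\leq_1$-above a removed element can have been placed in the prefix), and conversely, for a fixed ideal $I$ with $|I| = n-k$, the linear extensions $\sigma$ of $P$ with $\{\sigma(k+1),\ldots,\sigma(n)\} = I$ correspond bijectively, via standardization, to pairs $(\text{linear extension of } P\setminus I,\ \text{linear extension of } I)$. Summing, $\sum_{\sigma,k}\sigma_1^{(k)}\otimes\sigma_2^{(k)} = \sum_I \Theta(P\setminus I)\otimes\Theta(I)$, as desired. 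Counitality is immediate.

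\emph{Compatibility with the pairing and surjectivity.} For the pairing it suffices, by bilinearity and since both pairings are graded, to check $\langle P,Q\rangle = \langle\Theta(P),\Theta(Q)\rangle_{\FQSym}$ for $P,Q \in \SP(n)$. The right-hand side equals $\#\{(\sigma,\tau)\in S_P\times S_Q : \sigma = \tau^{-1}\} = \#\{\sigma \in S_P : \sigma^{-1}\in S_Q\}$. The left-hand side is $\mathrm{Card}(S(P,Q))$. I would exhibit a bijection between $S(P,Q)$ and $\{\sigma\in S_P : \sigma^{-1}\in S_Q\}$: using that $P = \{1,\ldots,n\}$ and $Q = \{1,\ldots,n\}$ as $\leq_2$-ordered sets, a picture $\phi: P\to Q$ is a permutation, and unwinding Definition \ref{6} — the first condition says $i\leq_1^P j \Rightarrow \phi(i)\leq_2^Q\phi(j)$, i.e. $\phi(i) < \phi(j)$ numerically, which is exactly ``$\phi^{-1}$ is a linear extension of $P$'' after comparing with the definition of $S_P$ via $S(\tun^n,P)$; the second condition says $\phi(i)\leq_1^Q\phi(j)\Rightarrow i\leq_2^P j$, i.e. $i<j$, which dually says $\phi$ is a linear extension of $Q$. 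So pictures $P\to Q$ are precisely permutations $\phi$ with $\phi \in S_Q$ and $\phi^{-1}\in S_P$, matching the count. Finally, surjectivity of $\Theta$: one shows by induction on $n$ that every $\sigma\in\S_n$ lies in the image, for instance by noting that the restriction of $\Theta$ to $\h_{\SPP}$ is already surjective — $\Theta(\Phi_n(\sigma))$ has leading term $\sigma$ in a suitable order — or by directly observing that for a totally $\leq_1$-ordered special poset $P$ one has $S_P$ a singleton, giving all single permutations, and using multiplicativity plus a triangularity argument. I would phrase this via the bijection $\Phi_n$ of the previous section: the composite $\h_{\SPP}\hookrightarrow\h_{\SP}\xrightarrow{\Theta}\FQSym$ sends $\Phi_n(\sigma)$ to a sum of permutations whose $\leq$-minimal (right weak Bruhat) element is $\sigma$, hence is bijective by triangularity, which forces $\Theta$ itself to be onto.

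\emph{Main obstacle.} The genuinely delicate point is the comultiplicativity: specifically, checking that the ideal $I_{\sigma,k} = \{\sigma(k+1),\ldots,\sigma(n)\}$ is well defined independently of how one writes it and that the induced orders on $P\setminus I$ and $I$ are exactly the subposet orders, so that standardization really does biject ``linear extensions of $P$ refining the cut at position $k$ with suffix-set $I$'' with ``$S_{P\setminus I}\times S_I$''. Everything else is bookkeeping once the $S_P = S(\tun^n,P)$ dictionary and the defining combinatorics of $\FQSym$ are in hand; this is also why the result is attributed to \cite{MR2} rather than proved afresh here.
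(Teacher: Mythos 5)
The paper does not actually prove this theorem: it is stated with an attribution to \cite{MR2}, so there is no in-paper argument to compare yours against. Your proof is correct and is the standard one. The dictionary $S_P=S(\tun^n,P)$, the shuffle description of $S_{PQ}$, the bijection $(\sigma,k)\leftrightarrow(I,\sigma_1^{(k)},\sigma_2^{(k)})$ with $I=\{x_{\sigma(k+1)},\ldots,x_{\sigma(n)}\}$ (whose well-definedness you correctly flag as the one point needing care: upward closure of the suffix set and the identification of standardized prefixes/suffixes with linear extensions of the subposets), and the identification of pictures $\phi\in S(P,Q)$ with permutations satisfying $\phi\in S_Q$ and $\phi^{-1}\in S_P$ all check out against the paper's definitions.

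Two small remarks on the surjectivity paragraph. First, your observation that a special poset whose first order is total has a singleton $S_P$ already hits every permutation on the nose, so no triangularity argument is needed there; you can drop the extra machinery. Second, in the alternative route via $\Phi_n$, you say $\Theta(\Phi_n(\sigma))$ is a sum of permutations whose $\leq$-\emph{minimal} element in the right weak order is $\sigma$; by Proposition \ref{21} the image is the lower interval $\{\tau\mid \tau\leq \Psi_n(\Phi_n(\sigma))^{-1}\}$, whose minimal element is always the identity --- it is the unique \emph{maximal} term $\sigma^{-1}$ that recovers $\sigma$, and triangularity should be run with respect to that. This is a slip in a redundant argument, not a gap in the proof.
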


\begin{example}
If $\{i,j,k\}=\{1,2,3\}$:
\begin{align*}
\Theta(\tdun{$i$}\tdun{$j$}\tdun{$k$})&=ijk+ikj+jik+jki+kij+kji\\
\Theta(\tdun{$i$}\tddeux{$j$}{$k$})&=ijk+jik+jki\\
\Theta(\tdtroisun{$i$}{$k$}{$j$})&=ijk+ikj\\
\Theta(\tdtroisdeux{$i$}{$j$}{$k$})&=ijk
\end{align*}\end{example}

It is proved in \cite{Foissy7} that the restriction of $\Theta_{\mid \h_{\HOF}}$  is an isomorphism
from $\h_{\HOF}$ to $\FQSym$ (Proposition 7). Consequently, 
$\Theta$ and its restrictions to $\h_{\HOP}$ and to $\h_{\OF}$ are surjective.

\begin{cor}\label{19}
The kernel of the pairing on $\h_{\SP}$ is $\Ker(\Theta)$. 
The kernel of the pairing restricted to $\h_{\HOP}$ and $\h_{\OF}$  is respectively $\Ker(\Theta)\cap \h_{\HOP}$ and $\Ker(\Theta)\cap \h_{\OF}$.
\end{cor}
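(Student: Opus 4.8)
The plan is to deduce everything from the fact that $\Theta$ respects the pairings, i.e. $\langle x,y\rangle=\langle\Theta(x),\Theta(y)\rangle_{\FQSym}$ for $x,y\in\h_{\SP}$, together with the nondegeneracy of the pairing on $\FQSym$. First I would prove the statement for $\h_{\SP}$ itself. If $x\in Ker(\Theta)$, then for every $y\in\h_{\SP}$ we have $\langle x,y\rangle=\langle\Theta(x),\Theta(y)\rangle_{\FQSym}=\langle 0,\Theta(y)\rangle_{\FQSym}=0$, so $x$ lies in the kernel of the pairing; hence $Ker(\Theta)\subseteq Ker(\langle-,-\rangle_{\mid\h_{\SP}})$. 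Conversely, if $x$ is in the kernel of the pairing on $\h_{\SP}$, then since $\Theta$ is surjective, every element of $\FQSym$ is of the form $\Theta(y)$ for some $y\in\h_{\SP}$, so $\langle\Theta(x),\Theta(y)\rangle_{\FQSym}=\langle x,y\rangle=0$ for all $y$, which forces $\Theta(x)=0$ by nondegeneracy of the pairing on $\FQSym$; hence $Ker(\langle-,-\rangle_{\mid\h_{\SP}})\subseteq Ker(\Theta)$. This gives the first assertion.

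Next I would handle $\h_{\HOP}$ and $\h_{\OF}$. One inclusion is immediate and formal: $Ker(\Theta)\cap\h_{\HOP}$ is contained in the kernel of $\langle-,-\rangle_{\mid\h_{\HOP}}$, because any element killed by $\Theta$ pairs trivially with everything in $\h_{\SP}$, in particular with everything in $\h_{\HOP}$. The same argument works for $\h_{\OF}$. For the reverse inclusion I would need: if $x\in\h_{\HOP}$ pairs to zero with all of $\h_{\HOP}$, then already $\Theta(x)=0$. The key point I would want is that $\Theta$ restricted to $\h_{\HOP}$ is still surjective onto $\FQSym$ (and likewise $\Theta$ restricted to $\h_{\OF}$ is surjective); granting this, the same pairing-chase as above applies: for $x\in\h_{\HOP}$ in the kernel of $\langle-,-\rangle_{\mid\h_{\HOP}}$ and any $\tau\in\FQSym$, write $\tau=\Theta(y)$ with $y\in\h_{\HOP}$, and conclude $\langle\Theta(x),\tau\rangle_{\FQSym}=\langle x,y\rangle=0$, hence $\Theta(x)=0$, i.e. $x\in Ker(\Theta)\cap\h_{\HOP}$.

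The main obstacle is therefore establishing that $\Theta_{\mid\h_{\HOP}}$ and $\Theta_{\mid\h_{\OF}}$ are surjective. This should follow from the examples already computed: for instance $\Theta(\tdtroisdeux{$i$}{$j$}{$k$})=ijk$ shows all permutations of $\S_3$ arise, and more generally a totally ordered special poset (a single chain labelled by $\sigma^{-1}$, which is heap-ordered precisely when one chooses the right labelling — here any chain lies in $\h_{\OF}$ and $\h_{\HOP}$) has a single linear extension, so $\Theta$ maps the chain with appropriate labels to a single permutation $\sigma$. Concretely, for $\sigma\in\S_n$, the ordered forest consisting of the chain $1<_1 2<_1\cdots<_1 n$ relabelled so that its second order reads off $\sigma$ gives $\Theta$ of it equal to $\sigma$; such a labelled chain is an ordered forest and is heap-ordered only for the identity labelling, so for $\h_{\HOP}$ I would instead use a triangular argument: order $\S_n$ by the number of inversions (or by weak Bruhat order) and show by induction that each $\sigma$ lies in the image of $\Theta_{\mid\h_{\HOP}}$, using that $\Theta(P)=Id_n+(\text{higher terms})$ for suitable heap-ordered $P$. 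Alternatively, and more cleanly, I would invoke the forthcoming corollary \ref{22} which states that $\Theta_{\mid\h_{\HOF}}$ is already an isomorphism onto $\FQSym$; since $\h_{\HOF}\subseteq\h_{\HOP}\subseteq\h_{\SP}$ and $\h_{\HOF}\subseteq\h_{\OF}$, surjectivity of $\Theta$ on both $\h_{\HOP}$ and $\h_{\OF}$ is immediate, and the pairing-chase then finishes the proof.
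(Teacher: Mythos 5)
Your argument is correct and is essentially the paper's own proof: both rest on $\Theta$ being an isometry onto $\FQSym$, whose pairing is nondegenerate, so that an element of the radical of the restricted pairing must be sent to $0$ once the restriction of $\Theta$ to the subalgebra in question is known to be surjective. You are in fact more explicit than the paper (which just says ``the proof is similar'' for $\h_{\HOP}$ and $\h_{\OF}$) in supplying that surjectivity — via labelled chains for $\h_{\OF}$ and via the later-established isomorphisms $\Theta_{\mid\h_{\HOF}}$ (theorem \ref{25}, not corollary \ref{22}, which concerns $\h_{\SPP}$; either works since both sit inside $\h_{\HOP}$) — and there is no circularity in doing so, since those results do not depend on this corollary.
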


\begin{proof} For any $x \in \h_{\SP}$, as $\Theta$ is surjective:
\begin{align*}
x \in \h_{\SP}^\perp&\Longleftrightarrow\forall y\in \h_{\SP}, \: \langle x,y\rangle=0\\
&\Longleftrightarrow\forall y\in \h_{\SP}, \: \langle \Theta(x),\Theta(y)\rangle_\FQSym=0\\
&\Longleftrightarrow\forall y'\in \FQSym, \: \langle \Theta(x),y'\rangle=0\\
&\Longleftrightarrow\Theta(x) \in \FQSym^\perp\\
&\Longleftrightarrow \Theta(x)=0.
\end{align*}
So $\h_{\SP}^\perp=\Ker(\Theta)$. The proof is similar for $\h_{\HOP}$ and $\h_{\OF}$. \end{proof}

\subsection{Restriction to special plane posets}

\begin{prop}\label{21}
Let $n \in \mathbb{N}$. We partially order $\S_n$ by the right weak Bruhat order \cite{Stanley1}.
\begin{enumerate}
\item If $P \in \SPP(n)$, then $\displaystyle \Theta(P)=\sum_{\sigma \in \S_n,\: \sigma\leq \Phi_n(P)^{-1}} \sigma$.
\item Let $P\in \SP(n)$. There exists $\tau \in \S_n$, such that $S_P=\{\sigma \in \S_n\mid \sigma \leq \tau\}$ if, and only if, $P \in \SPP$.
\end{enumerate}\end{prop}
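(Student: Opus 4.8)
The plan is to establish point~1 in the sharp form $S_P=\{\sigma\in\S_n\mid \sigma\leq\Psi_n(P)^{-1}\}$, where $\Psi_n=\Phi_n^{-1}$ (so that $\Psi_n(P)$ is the permutation denoted $\Phi_n(P)$ in the statement) and $\leq$ is the right weak order; since $\Theta(P)=\sum_{\sigma\in S_P}\sigma$, this is literally the content of point~1. First I would recall the standard combinatorial description of the right weak Bruhat order \cite{Stanley1}: associating to $u\in\S_n$ the set $E(u)=\{(i,j)\mid 1\leq i<j\leq n,\ u^{-1}(i)>u^{-1}(j)\}$ of pairs of values occurring in decreasing order in the word $u(1)\cdots u(n)$, one has $u\leq v$ if and only if $E(u)\subseteq E(v)$ (the cover relations being $u\lessdot us_i$).

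Next I would fix $P\in\SPP(n)$ and set $\tau=\Psi_n(P)$, so that $P=\Phi_n(\tau)$. By Proposition~\ref{10}, the underlying totally ordered set of $P$ is $\{1,\dots,n\}$ with its usual order, for each $i<j$ exactly one of $i<_h j$ and $i<_r j$ holds, and $i<_h j\iff\tau(i)<\tau(j)$ whereas $i<_r j\iff\tau(i)>\tau(j)$. Unwinding the definition of a linear extension, $\sigma\in S_P$ iff $\sigma^{-1}(i)<\sigma^{-1}(j)$ whenever $i<_h j$, i.e. $E(\sigma)\cap\{(i,j)\mid i<_h j\}=\emptyset$; since $E(\sigma)\subseteq\{(i,j)\mid i<j\}$ and this set is the disjoint union of $\{(i,j)\mid i<_h j\}$ and $\{(i,j)\mid i<_r j\}$, this is equivalent to $E(\sigma)\subseteq\{(i,j)\mid i<_r j\}$. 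As $\{(i,j)\mid i<_r j\}=\{(i,j)\mid i<j,\ \tau(i)>\tau(j)\}=E(\tau^{-1})$, I get $S_P=\{\sigma\mid E(\sigma)\subseteq E(\tau^{-1})\}=\{\sigma\mid\sigma\leq\tau^{-1}\}$, which is point~1; the forward implication of point~2 then follows by taking $\tau=\Psi_n(P)^{-1}$.

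For the converse of point~2 I would argue as follows. Suppose $S_P=\{\sigma\in\S_n\mid\sigma\leq\tau\}$ for some $\tau\in\S_n$, and set $P'=\Phi_n(\tau^{-1})$; by Proposition~\ref{10} this is a plane poset, hence — through its induced total order, which is the usual order on $\{1,\dots,n\}$ — an element of $\SPP(n)$, and $P,P'$ are special posets on the same totally ordered set. Applying point~1 to $P'$ and using that $\Phi_n$ and $\Psi_n$ are mutually inverse yields $S_{P'}=\{\sigma\mid\sigma\leq(\tau^{-1})^{-1}\}=\{\sigma\mid\sigma\leq\tau\}=S_P$. Finally I would invoke the remark (following the definition of linear extensions, and resting on Trotter's theorem \cite{Trotter}) that $S_P$ determines the relation $<_1$, hence determines $P$: from $S_P=S_{P'}$ it follows that $P=P'$, so $P\in\SPP$.

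The only genuine difficulty is bookkeeping. One must commit to a single convention for the right weak order — with cover relations $u\lessdot us_i$ — and then keep strict track of the inverses linking $\Phi_n$, its inverse $\Psi_n$, the definition of $S_P$ (which is phrased through $\sigma^{-1}$), and the characterization $u\leq v\iff E(u)\subseteq E(v)$; a single misplaced inverse or transposed inequality invalidates the formula. Once point~1 and the determination of $P$ by $S_P$ are in hand, the converse half of point~2 is essentially a one-line argument.
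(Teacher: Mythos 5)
Your proof is correct, and it takes a genuinely different route from the paper's. For point 1 the paper argues by a self-contained double induction: given $\sigma\in S_P$ it picks a pair $(i,k)$ with $i<_r k$ and $\sigma^{-1}(k)-\sigma^{-1}(i)$ minimal, shows this difference must equal $1$, and climbs from $\sigma$ to $\tau$ by adjacent transpositions; conversely it descends from $\tau$ to $\sigma$ through elementary transformations and checks case by case that each step stays in $S_P$. This is precisely a hands-on re-derivation, in the present setting, of the inversion-set characterization $u\leq v\iff E(u)\subseteq E(v)$ of the right weak order that you instead import as a classical theorem; once that is granted, your reduction of point 1 to the set identity $E(\sigma)\subseteq\{(i,j)\mid i<_r j\}=E(\tau^{-1})$ is a clean shortcut, and your conventions (cover relations $u\lessdot us_i$, $E(u)$ the inversion set of $u^{-1}$) do match the order used in the paper, as one checks against its Hasse diagram of $\S_3$ and the displayed values of $\Theta$ in degree $3$. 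For the converse of point 2 your argument is not only different but more complete: the paper merely observes that $Id_n\in S_P$ forces $P$ to be heap-ordered and stops there, which by proposition \ref{9} rules out only one of the two forbidden subposets characterizing $\SPP$; your comparison of $P$ with $P'=\Phi_n(\tau^{-1})$, combined with the fact (recorded in the paper's remarks on linear extensions, via Trotter's theorem) that $S_P$ determines $P$, closes that gap. In short, the paper's approach buys self-containedness at the cost of a longer induction, while yours buys brevity and a watertight converse at the cost of citing the standard description of the weak order.
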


\begin{proof} $1$. We put $\tau=\Phi_n(P)^{-1}$. The aim is to prove that for all $\sigma \in \S_n$, $\sigma \in S_P$ if, and only if, $\sigma \leq \tau$.

Let us assume that $\sigma \in S_P$. We put:
\[I=\{(i,j)\:\mid\: i<_rj,\: \sigma^{-1}(i)<\sigma^{-1}(j)\}.\]
Let us prove that $\sigma \leq \tau$ by induction on $|I|$. If $|I|=0$, by definition of the elements of $S_P$, for all $i<j$:
\[i<_h j \Longleftrightarrow \sigma^{-1}(i)<\sigma^{-1}(j) \Longleftrightarrow \tau^{-1}(i)<\tau^{-1}(j).\]
So $\sigma=\tau$. Let us assume now that $|I|\geq 1$. Let us choose $(i,k)\in I$, such that $E=\sigma^{-1}(k)-\sigma^{-1}(i)$ is minimal.
If $E\geq 2$, let $j$ such that $\sigma^{-1}(i)<\sigma^{-1}(j) < \sigma^{-1}(k)$. Three cases are possible.
\begin{enumerate}
\item If $i<j<k$, by minimality of $E$, $i<_h j$ et $j<_h k$, so $i<_h k$. This contradicts $i<_r k$.
\item If $j<i<k$, by minimality of $E$, $j<_h k$. As $\sigma \in S_P$, $j <_r i$. As $i<_r k$, we obtain $j<_r k$. This contradicts $j <_h k$.
\item If $i<k<j$, by  minimality of $E$, $i<_h j$. As $\sigma \in S_P$, $k<_r j$. As $i<_r k$, $i<_r j$. This contradicts $i<_h j$.
\end{enumerate}
In all cases, this gives a contradiction. So $E=1$, that is to say $\sigma^{-1}(i)=\sigma^{-1}(k)-1$.
The permutation $\sigma'$ obtained from $\sigma$ by permuting $i$ and $k$ in the word representing $\sigma$ is greater than $\sigma$ for the right weak Bruhat order
by definition of this order; moreover, it is not difficult to show that it is also an element of $S_P$ (as $(i,k)\in I$), with a strictly smaller $|I|$.
By the induction hypothesis, $\sigma \leq \sigma'\leq \tau$. \\

Let us assume that $\sigma \leq \tau$ and let us prove that $\sigma \in S_P$. Then $\tau$ is obtained from $\sigma$ by a certain number $k$
of elementary transformations (that is to say the permutations of two adjacent letters $ij$ with $i<j$ in the word representing $\sigma$).
We proceed by induction on $k$. If $k=0$, then $\sigma=\tau$. If $k\geq 1$ there exists $\sigma' \in \S_n$, obtained from $\sigma$ by one elementary transformation,
such that $\tau$ is obtained from $\sigma'$ by $k-1$ elementary transformations. By the induction hypothesis, $\sigma' \in S_P$.
We put $\sigma=(\ldots a_i a_{i+1}\ldots)$, $\sigma'=(\ldots a_{i+1}a_i \ldots)$, with $a_i<a_{i+1}$. Let us prove that $\sigma\in S_P$. Let $k<_h l$. 
\begin{itemize}
\item If $k,l \neq a_i,a_{i+1}$, as $\sigma'\in S_P$, $\sigma^{-1}(k)=\sigma'^{-1}(k)<\sigma'^{-1}(l)=\sigma^{-1}(l)$.
\item If $k=a_i$, as $\sigma' \in S_P$, $l \neq a_{i+1}$. So $\sigma^{-1}(l)=\sigma'^{-1}(l)>\sigma'^{-1}(k)=\sigma^{-1}(k)+1$,
and $\sigma^{-1}(k)<\sigma^{-1}(l)$.
\item If $k=a_{i+1}$, then $l \neq a_i$ as $k<l$. So $\sigma^{-1}(l)\sigma'^{-1}(l)> \sigma'^{-1}(k)+1=\sigma^{-1}(k)$.
\item If $l=a_i$, then $k \neq a_{i+1}$ as $k<l$. Then $\sigma^{-1}(k)=\sigma'^{-1}(k)< \sigma'^{-1}(l)-1=\sigma^{-1}(l)$.
\item If $l=a_{i+1}$, as $\sigma \in S_P$, $k\neq a_i$. Then $\sigma^{-1}(k)=\sigma'^{-1}(k)<\sigma'^{-1}(l)=\sigma^{-1}(l)-1$,
and $\sigma^{-1}(k)<\sigma^{-1}(l)$.
\end{itemize}
Indeed, $\sigma \in S_P$.\\

$2. \Longleftarrow$. Comes from the first point, with $\tau=\Phi_n(P)^{-1}$.\\

$2.\Longrightarrow$. Let us assume that $S_P=\{\sigma \in \S_n\mid \sigma \leq \tau\}$ for a particular $\tau$. 
Then $Id_n \in S_P$, so $P$ is heap-ordered. \end{proof}

\begin{example}
Here is the Hasse graph of $\S_3$, partially ordered by the right weak Bruhat order:
\[\xymatrix{&321\ar@{-}[rd]\ar@{-}[ld]&\\
231\ar@{-}[d]&&312\ar@{-}[d]\\
213&&132\\
&123\ar@{-}[ru]\ar@{-}[lu]&}\]
So:
\begin{align*}
\Theta(\tun\tun\tun)&=312+231+312+213+132+123\\
\Theta(\tun\tdeux)&=231+213+123\\
\Theta(\tdeux\tun)&=312+132+123\\
\Theta(\ptroisun)&=213+123\\
\Theta(\ttroisun)&=132+123\\
\Theta(\ttroisdeux)&=123.
\end{align*}\end{example}

As $\Phi_n:\SPP(n)\longrightarrow \S_n$ is a bijection:

\begin{cor}\label{22}
The restriction $\Theta_{\mid \h_{\SPP}}:\h_{\SPP}\longrightarrow \FQSym$ is an isomorphism.
\end{cor}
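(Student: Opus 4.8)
The plan is to reduce the statement to the explicit formula of proposition \ref{21}(1) together with a dimension count. First I would observe that nothing needs to be checked about the algebraic structure: $\h_{\SPP}$ is a Hopf subalgebra of $\h_{\SP}$ and $\Theta$ is a morphism of Hopf algebras, so $\Theta_{\mid \h_{\SPP}}$ is automatically a morphism of Hopf algebras, and the only thing to prove is that it is bijective. Both $\h_{\SPP}$ and $\FQSym$ are graded and connected, with finite-dimensional homogeneous components; moreover $\dim \h_{\SPP}(n)=|\SPP(n)|=|\PP(n)|=n!$ (using the earlier result that $\Phi_n$ is a bijection) and $\dim \FQSym_n=|\S_n|=n!$. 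Hence it suffices to show that $\Theta_{\mid \h_{\SPP}}$ is injective, or equivalently surjective, in each degree $n$.

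For this I would exploit the triangularity inherent in proposition \ref{21}(1): for $P\in \SPP(n)$ one has $\Theta(P)=\sum_{\sigma\leq \Phi_n(P)^{-1}}\sigma$, the sum being over the right weak Bruhat order. The ``leading'' term, with coefficient $1$, is $\Phi_n(P)^{-1}$, and since the assignment $P\mapsto \Phi_n(P)^{-1}$ is a bijection from $\SPP(n)$ onto $\S_n$ (compose the bijection $\Phi_n$ with inversion in $\S_n$), every permutation occurring in $\Theta(P)$ is $\leq \Phi_n(P)^{-1}$. Fixing any total order on $\S_n$ refining the right weak Bruhat order — such a linear extension exists since the order is finite — the transition matrix from the basis $\{\sigma:\sigma\in\S_n\}$ of $\FQSym_n$ to the family $\{\Theta(P):P\in\SPP(n)\}$ is unitriangular, hence invertible. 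So this family is a basis of $\FQSym_n$, and $\Theta_{\mid \h_{\SPP}(n)}$ is an isomorphism of vector spaces for every $n$; concatenating these, $\Theta_{\mid \h_{\SPP}}$ is an isomorphism of Hopf algebras.

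I do not expect a real obstacle, since the substantial ingredients are already in place: the bijectivity of $\Phi_n$ and, above all, the closed form $\Theta(P)=\sum_{\sigma\leq \Phi_n(P)^{-1}}\sigma$ from proposition \ref{21}. The one point I would emphasize is that the surjectivity of $\Theta$ on all of $\h_{\SP}$ does \emph{not} in itself yield surjectivity of its restriction to the \emph{proper} Hopf subalgebra $\h_{\SPP}$; it is precisely the unitriangularity (equivalently, an injectivity argument in each degree) that carries the proof.
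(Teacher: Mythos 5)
Your proposal is correct and follows essentially the same route as the paper: the paper justifies the corollary by invoking the bijectivity of $\Phi_n$ together with proposition \ref{21}(1), leaving implicit exactly the unitriangularity-with-respect-to-the-weak-order argument that you spell out. Your added detail (the dimension count $|\SPP(n)|=|\S_n|=n!$ and the remark that surjectivity of $\Theta$ on $\h_{\SP}$ does not by itself restrict to the subalgebra) is a faithful expansion of what the paper intends.
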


\begin{cor}\label{23}
The restriction of the pairing to $\h_{\SPP}$ is nondegenerate.
\end{cor}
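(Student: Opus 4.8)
The statement is an immediate consequence of Corollary \ref{22} together with the compatibility of $\Theta$ with the pairings stated in the Theorem. The plan is to transport the nondegeneracy of the pairing on $\FQSym$ back to $\h_{\SPP}$ along the isomorphism $\Theta_{\mid \h_{\SPP}}$.

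Concretely, I would argue as follows. Let $x \in \h_{\SPP}$ be an element of the kernel of the restricted pairing, i.e. $\langle x, y \rangle = 0$ for all $y \in \h_{\SPP}$. By the Theorem, $\langle x, y \rangle = \langle \Theta(x), \Theta(y) \rangle_{\FQSym}$, so $\langle \Theta(x), \Theta(y) \rangle_{\FQSym} = 0$ for all $y \in \h_{\SPP}$. By Corollary \ref{22}, $\Theta_{\mid \h_{\SPP}}$ is a bijection onto $\FQSym$, hence as $y$ runs over $\h_{\SPP}$, the element $\Theta(y)$ runs over all of $\FQSym$. Therefore $\Theta(x)$ is orthogonal to all of $\FQSym$ for the pairing $\langle-,-\rangle_{\FQSym}$, which is nondegenerate; hence $\Theta(x) = 0$. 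Since $\Theta_{\mid \h_{\SPP}}$ is injective, this forces $x = 0$. Thus the kernel is trivial and the restricted pairing is nondegenerate. (One can equivalently phrase this as: the restricted pairing is, via $\Theta_{\mid\h_{\SPP}}$, identified with the pairing of $\FQSym$, which is nondegenerate.)

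There is essentially no obstacle here — the corollary is a formal deduction from the two previously established facts (Corollary \ref{22} and the isometry property of $\Theta$), exactly parallel to the proof of Corollary \ref{19}. The only point worth a word of care is that one must use surjectivity of $\Theta_{\mid\h_{\SPP}}$ to conclude that $\Theta(x)$ pairs to zero against \emph{every} element of $\FQSym$, not merely against the image of $\h_{\SPP}$; but this is precisely what Corollary \ref{22} provides.

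\begin{proof}
Let $x \in \h_{\SPP}$ lie in the kernel of the restriction of the pairing to $\h_{\SPP}$. For all $y \in \h_{\SPP}$, the Theorem gives
$$0 = \langle x, y \rangle = \langle \Theta(x), \Theta(y) \rangle_{\FQSym}.$$
By corollary \ref{22}, $\Theta_{\mid \h_{\SPP}}: \h_{\SPP} \longrightarrow \FQSym$ is an isomorphism, so $\{\Theta(y) \mid y \in \h_{\SPP}\} = \FQSym$. Hence $\langle \Theta(x), z \rangle_{\FQSym} = 0$ for every $z \in \FQSym$. As the pairing of $\FQSym$ is nondegenerate, $\Theta(x) = 0$, and the injectivity of $\Theta_{\mid \h_{\SPP}}$ yields $x = 0$. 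Therefore the restriction of the pairing to $\h_{\SPP}$ is nondegenerate.
\end{proof}
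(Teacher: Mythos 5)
Your proof is correct and is exactly the argument the paper gives (the paper states it in one line: the restriction of $\Theta$ to $\h_{\SPP}$ is an isometric isomorphism onto $\FQSym$, whose pairing is nondegenerate). Your expanded version, correctly using surjectivity to pass from "orthogonal to $\Theta(\h_{\SPP})$" to "orthogonal to all of $\FQSym$" and then injectivity to conclude $x=0$, is just the unpacking of that same reasoning.
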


\begin{proof} As the isomorphism $\Theta_{\mid \h_{\SPP}}$ is an isometry and the pairing of $\FQSym$ is nondegenerate. \end{proof}

\subsection{Restriction to heap-ordered forests}

\begin{notation}
Let $P=(P,\leq_1,\leq_2)$ be a special poset. If $i,j \in P$, we denote by $[i,j]_1$ the set of elements $k$ of $P$ such that $i\leq_1 k\leq_1 j$.
We denote by $R_P=\{(i,j) \in P^2\mid [i,j]_1=\{i,j\}, \: i\neq j\}$. This set is in fact the set of edges of the Hasse graph of $(P,\leq_1)$, so allows
to reconstruct the double poset $P$.
\end{notation}

\begin{prop} \label{24}
Let $P$ be a special poset with $n$ elements.
\begin{enumerate}
\item Let $i,j \in P$, such that $(j,i) \in R_P$. We define:
\begin{itemize}
\item $P_1\in \SP(n)$ such that $R_{P_1}=R_P\setminus\{(j,i)\}$;
\item $P_2\in \SP(n)$ such that $R_{P_2}=(R_P\setminus\{(j,i)\})\cup \{(i,j)\}$, after the elimination of redundant elements. 
\end{itemize}
Then $\Theta(P)=\Theta(P_1)-\Theta(P_2)$.
\item Let $i,j,k \in P$, all distinct, such that $(i,k)$ and $(j,k) \in R_P$. We define:
\begin{itemize}
\item $P_3\in \SP(n)$, such that $R_{P_3}=R_P\setminus \{(j,k)\}$;
\item $P_4 \in \SP(n)$, such that $R_{P_4}=(R_P\setminus \{(j,k)\})\cup \{(i,j)\}$, after the elimination of redundant elements;
\item $P_5 \in \SP(n)$, such that $R_{P_5}=(R_P\setminus \{(j,k),(i,k)\}) \cup \{(i,j),(j,k)\}$, after the elimination of redundant elements.
\end{itemize}
Then $\Theta(P)=\Theta(P_3)-\Theta(P_4)+\Theta(P_5)$.
\end{enumerate} \end{prop}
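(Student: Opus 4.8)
The plan is to express everything through linear extensions via the formula $\Theta(Q)=\sum_{\sigma\in S_Q}\sigma$ and the linearity of $\Theta$, after the following elementary remark. If a special poset $Q$ of order $n$ is identified with $\{1,\dots,n\}$ through $\leq_2$, then $\sigma\in S_Q$ if and only if $\sigma^{-1}(a)<\sigma^{-1}(b)$ for every pair $(a,b)$ in \emph{any} set that generates $\leq_1$ by transitive closure (not just for the pairs $a<_1b$ themselves), simply because the order on positions induced by $\sigma$ is transitive. Consequently, if $Q'$ is obtained from $Q$ by adjoining a finite set $E$ of new covering pairs and the transitive closure of $R_Q\cup E$ is still a partial order, then $S_{Q'}=\{\sigma\in S_Q\mid \sigma^{-1}(a)<\sigma^{-1}(b)\text{ for all }(a,b)\in E\}$.

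For point~1, let $R_0=R_P\setminus\{(j,i)\}$ and let $P_0$ be the special poset with the same totally ordered set as $P$ whose first order is the transitive closure of $R_0$; thus $P_1=P_0$, the first order of $P$ is the closure of $R_0\cup\{(j,i)\}$, and the first order of $P_2$ is the closure of $R_0\cup\{(i,j)\}$ (these match the statement, the ``elimination of redundancies'' being cosmetic). Since $(j,i)\in R_P$ is a covering relation there is no chain $j=x_0<_1\dots<_1x_m=i$ with $m\geq 2$ in $P$, hence $i$ and $j$ are $\leq_1$-incomparable in $P_0$, so $R_0\cup\{(i,j)\}$ is acyclic and $P_2$ is indeed a special poset. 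Applying the remark to $P_0$ gives $S_{P_1}=S_{P_0}$, $S_P=\{\sigma\in S_{P_0}\mid\sigma^{-1}(j)<\sigma^{-1}(i)\}$ and $S_{P_2}=\{\sigma\in S_{P_0}\mid\sigma^{-1}(i)<\sigma^{-1}(j)\}$; these last two sets partition $S_{P_0}$, so summing the corresponding permutations yields $\Theta(P_1)=\Theta(P)+\Theta(P_2)$.

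For point~2, first note that $i$ and $j$ are $\leq_1$-incomparable in $P$: a relation $i<_1j$ would, together with $j<_1k$, contradict $(i,k)\in R_P$, and symmetrically a relation $j<_1i$ contradicts $(j,k)\in R_P$. Set $R_0=R_P\setminus\{(i,k),(j,k)\}$ and let $P_0$ be the special poset on the same totally ordered set whose first order is the transitive closure of $R_0$. Then the first orders of $P$, $P_3$, $P_4$, $P_5$ are the transitive closures of $R_0\cup\{(i,k),(j,k)\}$, $R_0\cup\{(i,k)\}$, $R_0\cup\{(i,k),(i,j)\}$ and $R_0\cup\{(i,j),(j,k)\}$ respectively (again matching the definitions in the statement after redundancy elimination), and each of these is a genuine partial order because none of $k<_1i$, $k<_1j$, $i<_1j$, $j<_1i$ holds in $P_0$, so no cycle is created among $i,j,k$ and hence none at all. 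By the remark, for $\sigma\in S_{P_0}$, writing $a\prec b$ for $\sigma^{-1}(a)<\sigma^{-1}(b)$, one has $\sigma\in S_P\Leftrightarrow(i\prec k)\wedge(j\prec k)$, $\sigma\in S_{P_3}\Leftrightarrow i\prec k$, $\sigma\in S_{P_4}\Leftrightarrow(i\prec k)\wedge(i\prec j)$, and $\sigma\in S_{P_5}\Leftrightarrow(i\prec j)\wedge(j\prec k)$.

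The proof is then finished by the pointwise identity of indicator functions
$$\mathbf{1}_{i\prec k}\,\mathbf{1}_{j\prec k}=\mathbf{1}_{i\prec k}-\mathbf{1}_{i\prec k}\,\mathbf{1}_{i\prec j}+\mathbf{1}_{i\prec j}\,\mathbf{1}_{j\prec k},$$
valid for every $\sigma\in S_{P_0}$: since $\prec$ restricts to one of the six total orders on $\{i,j,k\}$, this is a routine case check. Summing over $\sigma\in S_{P_0}$ gives $\Theta(P)=\Theta(P_3)-\Theta(P_4)+\Theta(P_5)$. I expect the only genuinely delicate part to be the well-definedness bookkeeping — checking that $P_2$ (resp.\ $P_4$, $P_5$) is again a special poset and coincides with the object described in the statement — which in each case reduces to the $\leq_1$-incomparability of $i$ and $j$ forced by the covering hypotheses; everything else is the transitivity remark plus the finite verification above.
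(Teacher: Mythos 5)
Your proof is correct and follows essentially the same route as the paper: both arguments reduce all five posets to sub-sums of linear extensions over a common set (your $S_{P_0}$ is the paper's $S$, resp.\ $S'$, of permutations respecting the unremoved covering relations) and then perform the same inclusion--exclusion on the relative positions of $i,j,k$ — your indicator identity is exactly the paper's decomposition into the partial sums $S_1,S_2,S_3$. The only difference is that you spell out the well-definedness of $P_2,P_4,P_5$ and the reduction to generating sets, which the paper leaves implicit.
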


\begin{proof} $1$. We denote by $S$ the set of permutations $\sigma\in \S_n$ such that, for all $(x,y) \in R_P\setminus \{(i,j)\}$, $\sigma^{-1}(x)<\sigma^{-1}(y)$.
Then:
\begin{align*}
\Theta(P_1)&=\sum_{\sigma \in S} \sigma,&
\Theta(P)&=\sum_{\substack{\sigma \in S,\\ \sigma^{-1}(j)<\sigma^{-1}(i)}} \sigma,&
\Theta(P_2)&=\sum_{\substack{\sigma \in S,\\ \sigma^{-1}(j)>\sigma^{-1}(i)}} \sigma.
\end{align*}
As a consequence, $\Theta(P)+\Theta(P_2)=\Theta(P_1)$.\\

$2$. Note that $i$ and $j$ are not comparable for $\leq_1$ (otherwise, for example if $i<_1 j$, then $i<_1j<_1k$, and this contradicts the definition of $R_P$).
We denote by $S'$ the set of permutations $\sigma \in \S_n$, such that for all $(x,y) \in R_P\setminus \{(i,k),(j,k)\}$, $\sigma^{-1}(x)<\sigma^{-1}(y)$. Then:
\begin{align*}
\Theta(P)&=\sum_{\substack{\sigma \in S',\\ \sigma^{-1}(i),\sigma^{-1}(j)<\sigma^{-1}(k)}}\sigma,&
\Theta(P_3)&=\sum_{\substack{\sigma \in S',\\\sigma^{-1}(i)<\sigma^{-1}(k)}}\sigma,\\
\Theta(P_4)&=\sum_{\substack{\sigma \in S',\\ \sigma^{-1}(i)<\sigma^{-1}(j),\sigma^{-1}(k)}}\sigma,&
\Theta(P_5)&=\sum_{\substack{\sigma \in S',\\\sigma^{-1}(i)<\sigma^{-1}(j)<\sigma^{-1}(k)}}\sigma.
\end{align*}
We put:
\begin{align*}
S_1&=\sum_{\substack{\sigma \in S',\\ \sigma^{-1}(i)<\sigma^{-1}(j)<\sigma^{-1}(k)}}\sigma,&
S_2&=\sum_{\substack{\sigma \in S',\\\sigma^{-1}(j)<\sigma^{-1}(i)<\sigma^{-1}(k)}}\sigma,
\end{align*}
\[S_3=\sum_{\substack{\sigma \in S',\\ \sigma^{-1}(i)<\sigma^{-1}(k)<\sigma^{-1}(j)}}\sigma.\]

Then $\Theta(P)=S_1+S_2$, $\Theta(P_3)=S_1+S_2+S_3$, $\Theta(P_4)=S_1+S_3$ and $\Theta(P_5)=S_1$.
Hence, $\Theta(P)+\Theta(P_4)=\Theta(P_3)+\Theta(P_5)$.
\end{proof}

\begin{remark}
In other words, in the first case, one replaces a double subposet $\tddeux{$j$}{$i$}$ of $P$ by $\tdun{$i$}\tdun{$j$}-\tddeux{$i$}{$j$}$.
In the second case, one replaces a double subposet $\pdtroisun{$k$}{$i$}{$j$}$ by $\tddeux{$i$}{$k$} \tdun{$j$}
-\tdtroisun{$i$}{$k$}{$j$}+\tdtroisdeux{$i$}{$j$}{$k$}$.
\end{remark}

\begin{theo} \label{25}
Let $P \in \SP$. Applying repeatedly the two transformations of proposition \ref{24}, with $i<j$ in the first case, and $i<j<k$ in the second case, 
we can associate to $P$ a linear span of heap-ordered forests. This linear span does not depend on the way  the transformations are performed,
so is well-defined: we denote it by $\Upsilon(P)$. Then $\Upsilon$ defines a Hopf algebra morphism from $\h_{\SP}$ to $\h_{\HOF}$, 
such that the following diagram commutes:
\[\xymatrix{\h_{\SP}\ar[r]^{\Theta} \ar[d]_{\Upsilon}&\FQSym\\
\h_{\HOF}\ar[ru]_{\Theta}}\]
The restriction $\Theta_{\mid \h_{\HOF}}$ is an isomorphism, and $\Upsilon_{\mid \h_{\HOF}}=Id_{\h_{\HOF}}$.
Moreover, $\langle \Upsilon(x),\Upsilon(y)\rangle=\langle x,y\rangle$ for all $x,y\in \h_{\SP}$ (that is to say $\Upsilon$ respects the pairings).
\end{theo}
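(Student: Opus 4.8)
The plan is to verify each assertion of the theorem in turn, leaning heavily on the already-established fact (Theorem preceding Corollary~\ref{19}) that $\Theta$ is a Hopf algebra morphism respecting pairings, together with the surjectivity of $\Theta$ and the description of $\mathrm{Ker}(\Theta)$ as the kernel of the pairing on $\h_{\SP}$. The overall strategy is: first show $\Upsilon$ is well-defined as a linear map into $\h_{\HOF}$; then show $\Theta\circ\Upsilon=\Theta$; then deduce the remaining properties (being a Hopf morphism, the restriction statements, the isometry) as formal consequences.

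First I would address well-definedness, which I expect to be the main obstacle. The issue is that each of the two local moves of Proposition~\ref{24} can be applied to many different subposets, and one must show the resulting linear combination of heap-ordered forests is independent of the order of application. Termination is easy: introduce a statistic such as the number of pairs $(x,y)\in R_P$ with $x>y$ in the total order $\leq_2$, plus a secondary statistic counting ``bad vees'' $\pdtroisun{$k$}{$i$}{$j$}$ with $i<j<k$; by Proposition~\ref{9} a special poset with both statistics zero is exactly a heap-ordered forest (no $\tddeux{$2$}{$1$}$ pattern, hence the second order is a linear extension, and the vee-condition plus heap-orderedness forces a forest — here one must be slightly careful, as the move also requires ruling out configurations forcing non-forest shape, which I would handle by noting each move strictly decreases a well-chosen well-founded measure). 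For confluence I would invoke a Newman-type diamond argument: it suffices to check local confluence, i.e. that any two moves applicable to $P$ can be completed to a common value. The cases are: two disjoint moves (trivially commute); two moves sharing vertices — here one expands both ways and checks equality using the defining relations $\Theta(P)=\Theta(P_1)-\Theta(P_2)$ and $\Theta(P)=\Theta(P_3)-\Theta(P_4)+\Theta(P_5)$. A cleaner route, which I would prefer, is to sidestep direct confluence: since $\Theta$ is \emph{injective on the span of heap-ordered forests} (this follows because $\Theta_{\mid\h_{\HOF}}$ will be shown to be an isomorphism, or more directly because $|\HOF(n)|=n!=\dim\FQSym_n$ and the images $\Theta(F)$ are triangular — each heap-ordered forest $F$ has $\mathrm{Id}_n\in S_F$, and one shows the $\Theta(F)$ are linearly independent by a leading-term argument on the weak Bruhat order), any two terminal values of the rewriting must coincide once we know both have the \emph{same image under} $\Theta$; and by Proposition~\ref{24} every single move preserves $\Theta$, so any terminal form $v$ of $P$ satisfies $\Theta(v)=\Theta(P)$, hence all terminal forms agree. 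This simultaneously proves well-definedness \emph{and} the relation $\Theta\circ\Upsilon=\Theta$ on generators, hence $\Theta\circ\Upsilon=\Theta$ as maps $\h_{\SP}\to\FQSym$.

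Given $\Theta\circ\Upsilon=\Theta$, the remaining claims follow quickly. Linearity of $\Upsilon$ is built into the definition. That $\Upsilon$ is a Hopf algebra morphism: I would argue that $\Upsilon$ restricted to $\h_{\HOF}$ is the identity (immediate, since a heap-ordered forest admits no applicable move), and that $\Theta_{\mid\h_{\HOF}}$ is an isomorphism — surjectivity because $\Theta\circ\Upsilon=\Theta$ is surjective and factors through $\h_{\HOF}$, injectivity by the dimension count $|\HOF(n)|=n!=\dim\FQSym_n$ together with surjectivity. Then $\Upsilon=(\Theta_{\mid\h_{\HOF}})^{-1}\circ\Theta$ as maps $\h_{\SP}\to\h_{\HOF}$: indeed both sides compose with $\Theta_{\mid\h_{\HOF}}$ to give $\Theta$, and $\Theta_{\mid\h_{\HOF}}$ is injective. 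Being a composite of Hopf algebra morphisms (here using that $(\Theta_{\mid\h_{\HOF}})^{-1}$ is one, since $\Theta_{\mid\h_{\HOF}}$ is a Hopf \emph{iso}), $\Upsilon$ is a Hopf algebra morphism, and the triangle $\Theta=\Theta_{\mid\h_{\HOF}}\circ\Upsilon$ commutes by construction.

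Finally, for the pairing statement: for $x,y\in\h_{\SP}$, using that $\Theta$ and $\Theta_{\mid\h_{\HOF}}$ are isometries (the former by the cited Theorem, the latter because it is a restriction of $\Theta$, or equivalently by Corollary-type reasoning as in \ref{23}), we compute
$$\langle \Upsilon(x),\Upsilon(y)\rangle=\langle \Theta_{\mid\h_{\HOF}}(\Upsilon(x)),\Theta_{\mid\h_{\HOF}}(\Upsilon(y))\rangle_\FQSym=\langle \Theta(x),\Theta(y)\rangle_\FQSym=\langle x,y\rangle,$$
where the middle equality is $\Theta_{\mid\h_{\HOF}}\circ\Upsilon=\Theta$. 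This closes the argument; the only genuinely delicate point, as noted, is the combinatorial verification underlying well-definedness, which I would organize via the ``$\Theta$ detects the answer'' shortcut above rather than a brute-force confluence check.
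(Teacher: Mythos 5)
Your proposal is correct and follows essentially the same route as the paper: termination of the rewriting into heap-ordered forests, preservation of $\Theta$ at each step by Proposition \ref{24}, surjectivity of $\Theta_{\mid \h_{\HOF}}$ from the existence of terminal forms plus surjectivity of $\Theta$, bijectivity by the count $|\HOF(n)|=n!$, and then uniqueness of the terminal form and the identity $\Upsilon=(\Theta_{\mid\h_{\HOF}})^{-1}\circ\Theta$, from which the Hopf-morphism and isometry claims follow formally. The "$\Theta$ detects the answer" shortcut you prefer over a Newman-style confluence check is exactly the paper's argument.
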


\begin{proof}  Let $P\in \SP$. It is clear that, using repeatedly the first transformation, we associate to $P$ a linear span of heap-ordered posets.
Then, using repeatedly the second transformation, we associate to this element of $\h_{\HOP}$ a linear span of heap-ordered forests.
Let $x$ be a linear span of heap-ordered forests obtained in this way. Using proposition \ref{24}, $\Theta(x)=\Theta(P)$.
As $\Theta:\h_{\SP}\longrightarrow \FQSym$ is surjective (as, for example, $\Theta_{\mid \h_{\SPP}}$ is an isomorphism),
$\Theta_{\mid \h_{\HOF}}$ is surjective. As $Card(\HOF(n))=Card(\S_n)=n!$ for all $n\in \mathbb{N}$, $\Theta_{\mid \h_{\HOF}}$ is bijective.
So $x$ is the unique antecedent of $\Theta(P) \in \FQSym$ in $\h_{\HOF}$, so $x=\left(\Theta_{\mid \h_{\HOF}}\right)^{-1}\circ \Theta(P)$
is unique, and $\Upsilon(P)=x$ is well-defined. Moreover, $\Upsilon=\left(\Theta_{\mid \h_{\HOF}}\right)^{-1}\circ \Theta$.
Consequently, it is a Hopf algebra morphism. As $\Theta$ respects the pairings, so does $\Upsilon$. \end{proof}

\begin{cor}\label{26}
\begin{enumerate}
\item $\Upsilon_{\mid \h_{\SPP}}:\h_{\SPP} \longrightarrow \h_{\HOF}$ is an isomorphism of graded Hopf algebras, and respects the pairings.
\item $\langle-,-\rangle_{\mid \h_{\HOF}}$ is nondegenerate.
\end{enumerate}\end{cor}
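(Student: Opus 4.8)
The plan is to derive both statements of Corollary \ref{26} as immediate consequences of the structural facts already established, essentially by composing isomorphisms and transporting the pairing. First I would prove statement (1). We already know from Corollary \ref{22} that $\Theta_{\mid \h_{\SPP}}:\h_{\SPP}\longrightarrow \FQSym$ is an isomorphism of graded Hopf algebras, and from Theorem \ref{25} that $\Theta_{\mid \h_{\HOF}}:\h_{\HOF}\longrightarrow \FQSym$ is an isomorphism of graded Hopf algebras, with $\Upsilon = \left(\Theta_{\mid \h_{\HOF}}\right)^{-1}\circ \Theta$ on all of $\h_{\SP}$. Restricting the last identity to $\h_{\SPP}$ gives $\Upsilon_{\mid \h_{\SPP}} = \left(\Theta_{\mid \h_{\HOF}}\right)^{-1}\circ \Theta_{\mid \h_{\SPP}}$, a composite of two graded Hopf algebra isomorphisms, hence itself a graded Hopf algebra isomorphism from $\h_{\SPP}$ onto $\h_{\HOF}$. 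That $\Upsilon$ respects the pairings is exactly the last assertion of Theorem \ref{25}, so in particular its restriction to $\h_{\SPP}$ does.

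For statement (2), the idea is that the pairing on $\h_{\HOF}$ is nondegenerate because it is isometric to a nondegenerate pairing. Concretely, since $\Upsilon_{\mid \h_{\SPP}}$ is a bijection respecting the pairings, for $x,y\in\h_{\SPP}$ we have $\langle\Upsilon(x),\Upsilon(y)\rangle_{\mid\h_{\HOF}}=\langle x,y\rangle_{\mid\h_{\SPP}}$; as $\Upsilon_{\mid\h_{\SPP}}$ is onto $\h_{\HOF}$, a nonzero element of the kernel of $\langle-,-\rangle_{\mid\h_{\HOF}}$ would pull back to a nonzero element of the kernel of $\langle-,-\rangle_{\mid\h_{\SPP}}$, contradicting Corollary \ref{23}. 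Alternatively, and perhaps more cleanly, one can invoke Corollary \ref{19} together with the surjectivity of $\Theta$: since $\Theta_{\mid\h_{\HOF}}$ is an isomorphism, the kernel of the pairing restricted to $\h_{\HOF}$ equals $\mathrm{Ker}(\Theta)\cap\h_{\HOF}=\{0\}$, again because $\langle x,y\rangle=\langle\Theta(x),\Theta(y)\rangle_{\FQSym}$ and the pairing on $\FQSym$ is nondegenerate. I would probably present the $\FQSym$-based argument, mirroring the one-line proof of Corollary \ref{23}.

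There is no real obstacle here: everything needed has been assembled in Corollaries \ref{22}, \ref{23}, \ref{19}, and Theorem \ref{25}. The only point requiring a moment's care is to make sure one is composing isomorphisms in the correct direction and that the grading and all algebraic structure (product, coproduct, unit, counit) are genuinely preserved — but this is automatic since $\Theta$ and its restrictions are Hopf algebra morphisms and $\Upsilon$ was shown to be one in Theorem \ref{25}. So the proof is a short formal deduction: restrict the identity $\Upsilon=\left(\Theta_{\mid\h_{\HOF}}\right)^{-1}\circ\Theta$ to $\h_{\SPP}$ for (1), and copy the kernel computation of Corollary \ref{23} (or \ref{19}) for (2).

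\begin{proof} $1$. By Theorem \ref{25}, $\Upsilon=\left(\Theta_{\mid \h_{\HOF}}\right)^{-1}\circ \Theta$ on $\h_{\SP}$, and $\Theta_{\mid \h_{\HOF}}:\h_{\HOF}\longrightarrow \FQSym$ is an isomorphism of graded Hopf algebras. Restricting to $\h_{\SPP}$, we get $\Upsilon_{\mid \h_{\SPP}}=\left(\Theta_{\mid \h_{\HOF}}\right)^{-1}\circ \Theta_{\mid \h_{\SPP}}$. By corollary \ref{22}, $\Theta_{\mid \h_{\SPP}}:\h_{\SPP}\longrightarrow \FQSym$ is an isomorphism of graded Hopf algebras, so $\Upsilon_{\mid \h_{\SPP}}$, being a composite of two such isomorphisms, is an isomorphism of graded Hopf algebras from $\h_{\SPP}$ onto $\h_{\HOF}$. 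That it respects the pairings is the last assertion of Theorem \ref{25}.

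$2$. Let $x\in \h_{\HOF}$ be in the kernel of $\langle-,-\rangle_{\mid \h_{\HOF}}$. For all $y\in \h_{\HOF}$, $\langle \Theta(x),\Theta(y)\rangle_\FQSym=\langle x,y\rangle=0$. As $\Theta_{\mid \h_{\HOF}}$ is surjective onto $\FQSym$, this gives $\langle \Theta(x),y'\rangle_\FQSym=0$ for all $y'\in \FQSym$, hence $\Theta(x)=0$ by nondegeneracy of the pairing of $\FQSym$. As $\Theta_{\mid \h_{\HOF}}$ is injective, $x=0$. So $\langle-,-\rangle_{\mid \h_{\HOF}}$ is nondegenerate. \end{proof}
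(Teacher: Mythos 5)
Your proof is correct and takes essentially the same approach as the paper: part (1) is obtained exactly as in the paper, by restricting the commutative triangle of Theorem \ref{25} and composing the two isomorphisms $\left(\Theta_{\mid \h_{\HOF}}\right)^{-1}$ and $\Theta_{\mid \h_{\SPP}}$. The only (inessential) variation is in part (2): the paper transports nondegeneracy from $\h_{\SPP}$ (Corollary \ref{23}) along the isometry $\Upsilon_{\mid \h_{\SPP}}$, whereas you transport it from $\FQSym$ along the isometric isomorphism $\Theta_{\mid \h_{\HOF}}$ --- the same principle with a different nondegenerate anchor, and both are valid.
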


\begin{proof} By restriction in the commutative diagram of theorem \ref{25}, we obtain the following commutative diagram:
\[\xymatrix{\h_{\SPP}\ar[r]^{\Theta} \ar[d]_{\Upsilon}&\FQSym\\
\h_{\HOF}\ar[ru]_{\Theta}}\]
As the two restrictions of $\Theta$ are isomorphisms of graded Hopf algebras and respect the pairing, 
so is $\Upsilon_{\mid \h_{\SPP}}=(\Theta_{\mid \h_{\SPP}})^{-1}\circ \Theta_{\mid \FQSym}$. As $\Upsilon_{\mid \h_{\SPP}}$ is an isometry
and the pairing on $\h_{\SPP}$ is nondegenerate, the pairing on $\h_{\HOF}$ is nondegenerate. \end{proof}

\section{More algebraic structures on special posets}

\subsection{Recalls on $Dup$-$Dend$ bialgebras}
 
\label{sect51} Recall that a duplicial algebra \cite{Loday2} is a triple $(A,.,\nwarrow)$, where $A$ is a vector space, and $.,\nwarrow$ are two products on $A$, 
with the following axioms: for all $x,y,z \in A$,
\begin{equation}\label{E1}\left\{\begin{array}{rcl}
(xy)z&=&x(yz),\\
(x \nwarrow y)\nwarrow z&=&x \nwarrow (y \nwarrow z),\\
(xy)\nwarrow z&=&x (y \nwarrow z).
\end{array}\right. \end{equation}
In particular, the products $.$ and $\nwarrow$ are both associative.
A dendriform coalgebra (dual notion of dendriform algebra, \cite{Loday1,Ronco}) is a triple $(A,\Delta_\prec,\Delta_\succ)$, 
where $A$ is a vector space, and $\Delta_\prec$ and $\Delta_\succ$ are two coproducts on $A$, with the following axioms: for all $x\in A$,
\begin{equation} \label{E2} \left\{\begin{array}{rcl}
(\Delta_\prec\otimes Id)\circ \Delta_\prec(x)&=&(Id \otimes \tdelta)\circ \Delta_\prec(x),\\
(\Delta_\succ\otimes Id)\circ \Delta_\prec(x)&=&(Id \otimes \Delta_\prec)\circ \Delta_\succ(x),\\
(\tdelta\otimes Id)\circ \Delta_\succ(x)&=&(Id \otimes \Delta_\succ)\circ \Delta_\succ(x).
\end{array}\right. \end{equation}
Note that these axioms imply that $\tdelta=\Delta_\prec+\Delta_\succ$ is coassociative. We shall use the following Sweedler notations: for any $a\in A$,
\begin{align*}
\tdelta(a)&=a'\otimes a'',&\Delta_\prec(a)&=a'_\prec \otimes a''_\prec,& \Delta_\succ(a)&=a'_\succ \otimes a''_\succ.
\end{align*}
A $Dup$-$Dend$ bialgebra \cite{Foissy3} is a family $(A,.,\nwarrow,\Delta_\prec,\Delta_\succ)$, where $A$ is a vector space,
$.,\nwarrow:A\otimes A\longrightarrow A$ and $\Delta_\prec,\Delta_\succ:A \longrightarrow A \otimes A$, with the following properties:
\begin{itemize}
\item $(A,.,\nwarrow)$ is a duplicial algebra (axioms \ref{E1}).
\item $(A,\Delta_\prec,\Delta_\succ)$ is a dendriform coalgebra (axioms \ref{E2}).
\item For all $x,y\in A$:
\begin{equation}
\label{E3}\left\{\begin{array}{rcl}
\Delta_\prec(xy)&=&y \otimes x+y'_\prec \otimes x y''_\prec+xy'_\prec \otimes y''_\prec+x'y\otimes x''+x'y'_\prec \otimes x'' y''_\prec,\\[2mm]
\Delta_\succ(xy)&=&x \otimes y+xy'_\succ \otimes y''_\succ+y'_\succ \otimes x y''_\succ+x'\otimes x''y+x'y'_\succ \otimes x''y''_\succ;\\[2mm]
\Delta_\prec(x \nwarrow y)&=&x \nwarrow y'_\prec \otimes y''_\prec+x'_\prec \nwarrow y \otimes x''_\prec
+x'_\prec \nwarrow y'_\prec \otimes x''_\prec y''_\prec,\\[2mm]
\Delta_\succ(x \nwarrow y)&=&x \otimes y+x \nwarrow y'_\succ \otimes y''_\succ+x'_\succ \otimes x''_\succ \nwarrow y\\[1mm]
&&+x'_\prec \otimes x''_\prec y+x'_\prec \nwarrow y'_\succ \otimes x''_\prec y''_\succ.
\end{array}\right. \end{equation} \end{itemize}

\subsection{Another product on special posets}

\begin{defi}\textnormal{\begin{enumerate}
\item Let $P=(P,\leq_1,\leq_2)$ be a nonempty special poset.  The maximal element of $(P,\leq_2)$ will be denoted by $g_P$.
\item Let $P$ and $Q$ be two nonempty special poset. We define $P \nwarrow Q$ by:
\begin{itemize}
\item $P \nwarrow Q=P \sqcup Q$ as a set, and $P,Q$ are special subposets of $P \nwarrow Q$.
\item For all $x \in P$, $y\in Q$, $x \leq_2 y$.
\item For all $x\in P$, $y\in Q$, $x\leq_1 y$ if, and only if, $x \leq_1 g_P$.
\end{itemize}\end{enumerate}}\end{defi}

\begin{remark}
Let $P$ and $Q$ be two nonempty special posets. A Hasse graph of $P\nwarrow Q$ is obtained by grafting a Hasse graph of $Q$ on the vertex 
representing $g_P$ of a Hasse graph of $P$. For example:
\begin{align*}
\tdun{$1$}\tdun{$2$} \nwarrow \tddeux{$2$}{$1$}&=\tdun{$1$}\tdtroisdeux{$2$}{$4$}{$3$},&
\tddeux{$1$}{$2$} \nwarrow \tdun{$1$}\tdun{$2$}&=\tdquatrequatre{$1$}{$2$}{$4$}{$3$},&
\tddeux{$2$}{$1$}\nwarrow \tdun{$1$}\tdun{$2$}&=\tdquatreun{$2$}{$4$}{$3$}{$1$}
\end{align*}
\end{remark}

\begin{lemma}
($\h_{\SP}^+,.,\nwarrow)$ is a duplicial algebra.
\end{lemma}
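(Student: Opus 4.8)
The plan is to verify the three axioms of \eqref{E1} directly. Associativity of the composition product $.$ is already known (it was stated when the product was introduced in definition \ref{4}), so only the second and third identities in \eqref{E1} need proof. I would check both on basis elements, i.e.\ on nonempty special posets $P$, $Q$, $R$, and then extend by linearity. Throughout I would use the combinatorial description of $\nwarrow$: $P\nwarrow Q$ has underlying set $P\sqcup Q$ with $\leq_2$ placing all of $P$ before all of $Q$, and with $\leq_1$ obtained from the disjoint union of $(P,\leq_1)$ and $(Q,\leq_1)$ by additionally declaring $x\leq_1 y$ for $x\in P$, $y\in Q$ exactly when $x\leq_1 g_P$; similarly for $P$ replaced by $PQ$ (where $g_{PQ}=g_Q$) or by $P\nwarrow Q$ (where $g_{P\nwarrow Q}=g_Q$).

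For the associativity of $\nwarrow$, I would compare $(P\nwarrow Q)\nwarrow R$ and $P\nwarrow(Q\nwarrow R)$. On both sides the underlying set is $P\sqcup Q\sqcup R$ and the order $\leq_2$ is the concatenation placing $P$ before $Q$ before $R$, so these agree. For $\leq_1$: within each of $P$, $Q$, $R$ the order is unchanged; between $P$ and $Q$ one gets $x\leq_1 y\iff x\leq_1 g_P$ on both sides; between $Q$ and $R$ one gets $x\leq_1 z\iff x\leq_1 g_Q$ on both sides (using $g_{P\nwarrow Q}=g_Q$ on the left, and the definition of $\nwarrow$ applied to $Q\nwarrow R$ on the right); between $P$ and $R$, the left side gives $x\leq_1 z\iff x\leq_1 g_{P\nwarrow Q}=g_Q$, and since $g_Q\in Q$ this holds iff $x\leq_1 g_P$ (because in $P\nwarrow Q$ the relation $x\leq_1 g_Q$ for $x\in P$ means $x\leq_1 g_P$); the right side gives $x\leq_1 z\iff x\leq_1 g_P$ directly, since $z\in Q\nwarrow R$ and $g_P\in P$. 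So the two $\leq_1$ orders coincide, proving the identity.

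For the third axiom, I would compare $(PQ)\nwarrow R$ and $P(Q\nwarrow R)$. Again both have underlying set $P\sqcup Q\sqcup R$, and $\leq_2$ is the concatenation $P<Q<R$ in both cases (using $g_{PQ}=g_Q$ on the left). For $\leq_1$: within $P$, $Q$, $R$ it is unchanged; between $P$ and $Q$, the left side has $P$ and $Q$ incomparable (by definition of the composition product $PQ$), and so does the right side (since $P$ is composed with $Q\nwarrow R$, so $P$ is incomparable to everything in $Q\nwarrow R$); between $Q$ and $R$, both sides give $y\leq_1 z\iff y\leq_1 g_Q$ (left: definition of $\nwarrow$ on $PQ$, using $g_{PQ}=g_Q$; right: definition of $\nwarrow$ on $Q\nwarrow R$... more precisely on $Q$, since inside $Q\nwarrow R$ this is exactly the grafting relation); between $P$ and $R$, the left side gives $x\leq_1 z\iff x\leq_1 g_{PQ}=g_Q$, which for $x\in P$ is false since $P$ and $Q$ are incomparable in $PQ$, and the right side has $P$ incomparable to $R\subseteq Q\nwarrow R$, also false. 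Hence the orders agree.

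The argument is entirely a bookkeeping check of which pairs of elements are related under $\leq_1$ and $\leq_2$ in the two iterated constructions; there is no real obstacle beyond being careful about the three ``cross'' cases ($P$--$Q$, $Q$--$R$, $P$--$R$) and correctly tracking the maximal elements $g_{PQ}=g_Q$ and $g_{P\nwarrow Q}=g_Q$. The mildly delicate point, which I would state explicitly, is the $P$--$R$ case in the associativity of $\nwarrow$, where one must observe that for $x\in P$ the relation $x\leq_1 g_Q$ inside $P\nwarrow Q$ unwinds to $x\leq_1 g_P$; once that is noted, everything matches.
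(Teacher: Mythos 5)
Your proof is correct and follows essentially the same route as the paper: both arguments identify $(P\nwarrow Q)\nwarrow R$ with $P\nwarrow(Q\nwarrow R)$ and $(PQ)\nwarrow R$ with $P(Q\nwarrow R)$ by checking that the two constructions impose identical $\leq_1$ and $\leq_2$ relations on $P\sqcup Q\sqcup R$, the key observation in each case being that $g_{P\nwarrow Q}=g_{PQ}=g_Q$. The paper states the common characterization more compactly, while you spell out the three cross cases (in particular the $P$--$R$ case, where $x\leq_1 g_Q$ in $P\nwarrow Q$ unwinds to $x\leq_1 g_P$), but the content is the same.
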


\begin{proof} Let $P,Q,R$ be three nonempty special posets. The special posets $(P \nwarrow Q)\nwarrow R$ and $P \nwarrow (Q \nwarrow R)$ 
are both characterized by:
\begin{itemize}
\item $S=P\sqcup Q \sqcup R$ as a set, and $P,Q,R$ are special subposets of $S$.
\item For all $x\in P$, $y\in Q$, $z\in R$, $x\leq_2 y\leq_2 z$.
\item For all $x\in P$, $y\in Q$, $z\in R$, $x \leq_1 y$ if, and only if, $x \leq g_P$; $x \leq_1 z$ if, and only if, $x \leq_1 g_P$;
$y \leq_1 z$ if, and only if, $y\leq_1 g_Q$.
\end{itemize}
The last point comes from the fact that $g_{R \nwarrow S}=g_S$ for any nonempty special posets $R$ and $S$. So they are equal.\\

The special posets $(PQ)\nwarrow R$ and $P(Q \nwarrow R)$ are both characterized by:
\begin{itemize}
\item $S=P\sqcup Q \sqcup R$ as a set, and $P,Q,R$ are special subposets of $S$.
\item For all $x\in P$, $y\in Q$, $z\in R$, $x\leq_2 y\leq_2 z$.
\item For all $x\in P$, $y\in Q$, $z\in R$, $x$ and $y$ are not comparable for $\leq_1$; 
$x$ and $z$ are not comparable for $\leq_1$; $y \leq_1 z$ if, and only if, $y \leq_1 g_Q$.
\end{itemize}
So $\h_{\SP}^+$ is a duplicial algebra. \end{proof}

\begin{prop}
Let $P,Q$ be two nonempty special posets.
Then $P\nwarrow Q \in \HOP$ (respectively $\OF$, $\SPP$, $\HOF$, $\SPF$, $\SWNP$) if, and only if,
$P,Q \in \HOP$ (respectively $\OF$, $\SPP$, $\HOF$, $\SPF$, $\SWNP$).
\end{prop}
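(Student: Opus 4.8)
The plan is to prove both implications of the ``if and only if'' by a direct, characterization-based argument, treating each of the six families via the combinatorial descriptions already available (Proposition \ref{9} for $\HOP$ and $\SPP$, the defining ``no-N'' condition for $\SWNP$, and the condition ``Hasse graph is a forest'' for $\OF$, $\SPF$). The key structural observation, which I would establish first, is a precise description of the partial order $\leq_1$ on $P\nwarrow Q$: for $x,y$ both in $P$ or both in $Q$ the relation is inherited; for $x\in P$, $y\in Q$ we have $x<_1 y$ iff $x\leq_1 g_P$; and there is never a relation from $Q$ to $P$. In particular the Hasse diagram of $P\nwarrow Q$ is exactly the Hasse diagram of $P$ with that of $Q$ grafted at the vertex $g_P$ (this is the Remark preceding the previous lemma), and $g_{P\nwarrow Q}=g_Q$.

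From this description the forward-or-backward passage for each family is a short check on three-element and four-element subposets. For $\HOP$: by Proposition \ref{9}(1) being heap-ordered is equivalent to containing no $\tddeux{$2$}{$1$}$; a copy of $\tddeux{$2$}{$1$}$ in $P\nwarrow Q$ must lie entirely in $P$ or entirely in $Q$, because every cross relation $x<_1 y$ has $x\in P$, $y\in Q$ and $x<_2 y$, which is compatible with heap-orderedness --- so $P\nwarrow Q$ is heap-ordered iff both $P$ and $Q$ are. For $\SPP$: by Proposition \ref{9}(2) one forbids both $\tddeux{$2$}{$1$}$ and $\tddeux{$1$}{$3$}\tdun{$2$}$; again a forbidden pattern either sits inside $P$, inside $Q$, or straddles the graft, and in the straddling case one checks directly using the explicit description of $\leq_1$ and $\leq_2$ that no new copy of either forbidden poset is created (here $\leq_2$ on the cross pairs is the ``$P$ before $Q$'' order, which is exactly what the patterns would need, but the $\leq_1$-structure rules them out). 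For $\OF$ and $\SPF$: grafting one forest onto a single vertex of another forest yields a forest, and conversely a connected-ness/forest argument on the Hasse graph recovers the claim; $\SPF=\OF\cap\SPP$ then follows formally, and similarly $\HOF=\HOP\cap\OF$, $\SWNP$ from the no-$\pquatrecinq$/no-$\pquatresix$ condition combined with the already-proven $\SPP$ case.

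Concretely I would organize the proof as: (i) prove the $\leq_1$-description of $P\nwarrow Q$ and the grafting picture; (ii) for $\Leftarrow$, observe that if $P,Q$ lie in a given family then every ``bad'' subposet of $P\nwarrow Q$ of the relevant size either lies in $P$, lies in $Q$, or straddles the graft vertex, and rule out the straddling case by inspecting the order relations; (iii) for $\Rightarrow$, note that $P$ and $Q$ are subposets of $P\nwarrow Q$ (they are special subposets by definition of $\nwarrow$), so membership of $P\nwarrow Q$ in a family that is closed under taking subposets immediately forces $P,Q$ to be in that family --- this direction is essentially free for all six families once one recalls each is closed under double subposets. So the only real content is direction $\Leftarrow$.

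The main obstacle is the straddling-subposet analysis in the $\SPP$ case: one must verify that grafting $Q$ at $g_P$ never manufactures a new copy of $\tddeux{$1$}{$3$}\tdun{$2$}$. The subtle point is that an element $x\in P$ with $x\not\leq_1 g_P$ becomes $\leq_1$-incomparable to every element of $Q$, so such an $x$ together with two elements of $Q$, or with $g_P$-related and $g_P$-unrelated elements, could a priori form the offending pattern; I would handle it by case analysis on how many of the three chosen vertices lie in $P$ versus $Q$ and, within $P$, on comparability with $g_P$, using that in a genuine plane poset the induced total order is determined by $\leq_h$ and $\leq_r$ via Proposition \ref{10}'s companion. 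This is finite bookkeeping with no conceptual difficulty, but it is where the argument must actually be carried out rather than quoted.
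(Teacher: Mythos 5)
Your proposal is correct and follows essentially the same route as the paper: the direction ``$P\nwarrow Q$ in the family implies $P,Q$ in the family'' is immediate from closure under double subposets, and the substantive direction is handled by contraposition via the forbidden-pattern characterizations (proposition \ref{9} for $\HOP$ and $\SPP$, the N-free condition for $\SWNP$), with a case analysis on how a putative forbidden configuration straddles $P$ and $Q$ and the key trick of substituting $g_P$ for the $Q$-vertices to exhibit the pattern inside $P$. The only divergences are cosmetic: for $\OF$ you argue directly that grafting a forest at $g_P$ preserves the chain-down-set property rather than running the paper's no-$\ptroisun$ case analysis (a mild simplification), and you leave the $\SWNP$ straddling check as bookkeeping, which is indeed routine once the $g_P$-substitution idea is in hand.
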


\begin{proof} We put $R=P \nwarrow Q$.\\

$\Longleftarrow$. In all the cases, this comes from the fact that $P$ and $Q$ are double subposets of $P\nwarrow Q$. \\

$\HOP. \Longrightarrow$. Recall from proposition \ref{9} that $R \in \HOP$ if, and only if, $R$ does not contain a double subposet isomorphic to $\tddeux{$2$}{$1$}$.
Let us assume that $P \nwarrow Q$ is not a heap-ordered poset. Then it contains two distinct elements $a,b$, such that $a\leq_1 b$ and $b\leq_2 a$.
If $a \in P$, then, by definition of $\leq_2$ on $R$, $b \in P$, so $P$ is not a heap-ordered poset.
If $a \in Q$, as $b\leq_1 a$, by definition of $\leq_1$ on $R$, $b \in Q$, so $Q$ is not a heap-ordered poset.\\

$\OF. \Longrightarrow$. Recall that $R$ is an ordered forest if, and only if, $(R,\leq_1)$ does not contain a double subposet isomorphic to $\ptroisun$ 
(see lemma 13 in \cite{Foissy1}). Let us assume that $R$ is not an ordered forest. Then it contains three different elements $a,b,c$, with $a\leq_2 b \leq_2 c$, 
such that one of the following assertions holds:
\begin{enumerate}
\item $b,c \leq_1 a$ and $b,c$ are not comparable for $\leq_1$: $(\{a,b,c\},\leq_1)=\pdtroisun{$a$}{$b$}{$c$}$.
\item $a,c \leq_1 b$ and $a,c$ are not comparable for $\leq_1$: $(\{a,b,c\},\leq_1)=\pdtroisun{$b$}{$a$}{$c$}$.
\item $a,b \leq_1 c$ and $a,b$ are not comparable for $\leq_1$: $(\{a,b,c\},\leq_1)=\pdtroisun{$c$}{$a$}{$b$}$.
\end{enumerate}
In the three cases, if the maximal element of $\{a,b,c\}$ for $\leq_1$ is in $P$, then, by definition of $\leq_1$ on $R$, $a,b,c \in P$, so $P$ is not an ordered forest.
Let us assume that this element is in $Q$.
In the first case, then, by definition of $\leq_2$ on $R$, $b,c \in Q$, so $Q$ is not an ordered forest. 
In the second case, we deduce similarly that $c \in Q$. If $a \in P$, then $a \leq_1 g_P$ in $P$ as $a \leq_1 b$ in $R$, so $a \leq_1 c$ in $R$: contradiction,
so $a \in Q$. As a consequence, $Q$ is not an ordered forest.
In the last case, then:
\begin{itemize}
\item If $a\in P$, $b\in Q$, then $a \leq_1 g_P$ in $P$ as $a\leq_1 c$ in $R$, so $a\leq_1 b$ in $R$: contradiction, this case is impossible.
\item Similarly, $a\in Q$, $b\in P$ is impossible.
\end{itemize}
So $a,b \in P$ or $a,b \in Q$. In the first subcase, $a,b \leq_1 g_P$ in $P$ as $a,b \leq_1 c$ in $R$, so $\{a,b,g_P\}$ is a subposet of $(P,\leq_1)$ 
isomorphic to $\ptroisun$: $P$ is not an ordered forest. In the second subcase, $Q$ contains $a,b,c$, so is not an ordered forest.\\

$\SPP.\Longrightarrow$. Recall from proposition \ref{9} that $R$ is a plane poset if, and only if, it is heap-ordered and does not contain a double subposet isomorphic to 
$\tddeux{$1$}{$3$}\tdun{$2$}$. Let us assume that $R$ is not a plane poset. If it is not heap-ordered, by the first point $P$ or $Q$ 
is not heap-ordered,  so is not a plane poset. Let us assume that there exists three different elements $a,b,c$ of $R$, such that $a\leq_2 b \leq_2 c$, $a\leq_1 c$, 
$a,b$ and $b,c$ are not comparable for $\leq_1$. By definition of $\leq_2$ on $R$, if $c \in P$, then $a,b \in P$, so $P \notin \SPP$.
If $c \in Q$ and $a\in Q$, then $b\in Q$ as $a\leq_2 b$, so $Q \notin \SPP$.
If $c\in Q$ and $a\in P$, then $a\leq_1 g_P$ in $P$. As $a$ and $b$ are not comparable for $\leq_1$ in $R$, $b \in P$.
As $b,c$ are not comparable for $\leq_1$ in $R$, $b$ and $g_P$ are not comparable for $\leq_1$ in $P$.
Let us consider $\{a,b,g_P\} \subseteq P$. By definition of $g_P$, $a\leq_ 2 b \leq_2 g_P$, so $\{a,b,g_P\}=\tddeux{$1$}{$3$}\tdun{$2$}$, so $P$ is not plane.\\

$\HOF.\Longrightarrow$. Comes from $\HOF=\OF \cap \HOP$.\\

$\SPF.\Longrightarrow$. Comes from $\SPF=\OF \cap \SPP$.\\ 

$\SWNP. \Longrightarrow$. Let us assume that $P\nwarrow Q$ is not a WN poset. If it is not plane, then 
by the third point, $P$ or $Q$ is not plane, so is not WN. Let us assume that $P\nwarrow Q$ is plane (so $P$ and $Q$ are plane).
Then $P\nwarrow Q$ contains a subposet $\{a,b,c,d\}$ isomorphic to $\pquatresix$ or $\pquatrecinq$. 
We assume that $a<_2 b<_2 c<_2 d$ in $P \nwarrow Q$. If $d \in P$,
then by definition of $P\nwarrow Q$, $\{a,b,c,d\} \subseteq P$, so $P$ is not WN. Similarly, if $a \in Q$, $Q$ is not WN.
We now assume that $a\in P$ and $d\in Q$. 
\begin{itemize}
\item If $\{a,b,c,d\}=\pquatresix$: as $a$ and $d$ are not comparable for $\leq_1$ in $P\nwarrow Q$, 
we do not have $a\leq_1 g_P$ in $P$. As $P$ is plane, it is heap-ordered, so $a$ and $g_P$ are not comparable for $\leq_1$ in $P$.
As $a<_1 c$ in $P\nwarrow Q$, necessarily $c \in P$. As $b<_2 c$ in $P\nwarrow Q$, $b\in P$. Moreover, as $b<_1 d$, $b<_1 g_P$.
As $c$ and $d$ are not comparable for $\leq_1$ in $P\nwarrow Q$, $c$ and $g_P$ are not comparable for $\leq_1$ in $P$.
So $\{a,b,c,g_P\}=\pquatresix$.
\item If $\{a,b,c,d\}=\pquatrecinq$: as $a<_1 d$ in $P\nwarrow Q$, $a<_1 g_P$ in $P$. As $a$ and $b$ are not comparable for $\leq_1$ in $P\nwarrow Q$,
necessarily $b\in P$. As $b<_1 d$, $b<_1 g_P$. As $c$ and $b$ are not comparable for $\leq_1$ in $P\nwarrow Q$, $c \in P$.
As $c$ and $d$ are not comparable for $\leq_1$, $c$ and $g_P$ are not comparable for $\leq_1$ in $P$. 
So $\{a,b,c,g_P\}=\pquatrecinq$.
\end{itemize}
In both cases, $P$ is not WN. \end{proof}

\begin{remark}
\begin{enumerate}
\item As a consequence, the augmentation ideals $\h_{\SP}^+$, $\h_{\HOP}^+$, $\h_{\SPP}^+$, $\h_{\OF}^+$, $\h_{\HOF}^+$, $\h_{\SWNP}^+$ 
and $\h_{\SPF}^+$  are duplicial algebras. 
\item It is proved in \cite{Foissy3} that $\h_{\SPF}^+$ is the free  duplicial algebra generated by $\tun$:
it is enough to observe that for any plane forest $F$, $g_F$ is the leaf of $F$ at most on the right, so $\nwarrow$, when restricted to plane forests, 
is precisely the product $\nwarrow$ defined in \cite{Foissy3}.
\end{enumerate}\end{remark}

\subsection{Dendriform coproducts on special posets}

For any nonempty special poset $P$, we put:
\begin{align*}
\Delta_\prec(P)&=\sum_{\substack{\mbox{\scriptsize $I$ non trivial ideal of $P$,}\\ g_P \notin I}} P\setminus I \otimes I,&
\Delta_\succ(P)=\sum_{\substack{\mbox{\scriptsize $I$ non trivial ideal of $P$,}\\ g_P \in I}} P\setminus I \otimes I.
\end{align*}
Note that $\Delta_\prec+\Delta_\succ=\tdelta$. Moreover, $\h_{\SP}^+$, $\h_{\HOP}^+$, $\h_{\SPP}^+$, $\h_{\OF}^+$, $\h_{\HOF}^+$, 
$\h_{\SWNP}^+$ and $\h_{\SPF}^+$ are stable under the coproducts $\Delta_\prec$ and $\Delta_\succ$.

\begin{prop} 
$\h_{\SP}^+$ is a $Dup$-$Dend$ bialgebra.
\end{prop}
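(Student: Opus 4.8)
The plan is to verify the three defining families of axioms for a $Dup$-$Dend$ bialgebra (\ref{E1}), (\ref{E2}), (\ref{E3}) directly on special posets, most of which have already been established or are immediate. The duplicial axioms (\ref{E1}) are exactly the content of the preceding lemma, so nothing remains there. What is left is to check that $(\h_{\SP}^+,\Delta_\prec,\Delta_\succ)$ is a dendriform coalgebra, and then the five compatibility relations (\ref{E3}) between the products $.,\nwarrow$ and the coproducts $\Delta_\prec,\Delta_\succ$.

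\textbf{Dendriform coassociativity.} First I would establish the three identities in (\ref{E2}). The key combinatorial fact is that if $I$ is a non-trivial ideal of $P$ and $J$ is a non-trivial ideal of $P\setminus I$, then $I\cup J$ is an ideal of $P$, and conversely every nested pair of ideals arises this way; this is already implicit in the coassociativity of $\tdelta$ on $\h_{\DP}$. The refinement needed here is the bookkeeping of where $g_P$ sits. Since $g_P$ is the $\leq_2$-maximum of $P$, an ideal $I$ of $P$ contains $g_P$ if and only if $I$ is non-empty and "reaches the top"; more precisely, for $\Delta_\succ$ we extract $g_P$ into the right tensor factor. One checks: in $(\Delta_\prec\otimes Id)\circ\Delta_\prec(P)$ we cut off first an ideal $I\not\ni g_P$, then inside $P\setminus I$ an ideal $J\not\ni g_{P\setminus I}=g_P$ (since $g_P\notin I$); in $(Id\otimes\tdelta)\circ\Delta_\prec(P)$ we cut $K\not\ni g_P$ and split $K=J\sqcup I'$ arbitrarily — matching $I'\leftrightarrow J$, $J\leftrightarrow I$. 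The second and third identities are analogous, tracking in each term whether $g_P$ has been removed yet. These are routine but must be done with care about the non-triviality conditions, which is why I would spell out the ideal-nesting correspondence once and reuse it.

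\textbf{Compatibilities (\ref{E3}).} The real work, and the step I expect to be the main obstacle, is the five formulas relating $\Delta_\prec,\Delta_\succ$ to $.$ and $\nwarrow$. For the product $.$ : an ideal $I$ of $PQ$ decomposes as $I_P\sqcup I_Q$ with $I_P$ an ideal of $P$, $I_Q$ an ideal of $Q$, but because $x\leq_2 y$ for all $x\in P,y\in Q$, if $I_P\neq\emptyset$ then $I_Q=Q$; also $g_{PQ}=g_Q$, so $g_{PQ}\in I$ iff $g_Q\in I_Q$. Enumerating the four cases ($I_P$ trivial or not, $I_Q$ trivial or not, subject to the constraint) and sorting by whether $g_Q$ lies in the ideal yields exactly the five terms on the right of each of the first two lines of (\ref{E3}). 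For $\nwarrow$ : here an ideal $I$ of $P\nwarrow Q$ again splits as $I_P\sqcup I_Q$ with $I_P$ an ideal of $P$ and $I_Q$ an ideal of $Q$, but the new subtlety is that $g_{P\nwarrow Q}=g_Q$ and that $x\in P$ satisfies $x\leq_1 y$ (some $y\in Q$) iff $x\leq_1 g_P$; consequently the complement $(P\nwarrow Q)\setminus I$, when $I_Q=\emptyset$, is $(P\setminus I_P)\nwarrow Q$, while when $I_P=\emptyset$ it is a plain product $(P\setminus\emptyset)\cdot\big(Q\setminus I_Q\big)$ — this asymmetry is precisely what produces the mix of $\nwarrow$ and $.$ terms on the right-hand sides of the last two lines of (\ref{E3}). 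The hard part is being scrupulous about which complements are $\nwarrow$-products versus $.$-products and which ideals are allowed to be trivial; once the case analysis on $(I_P,I_Q)\in\{\text{trivial},\text{non-trivial}\}^2$ (with the constraint that $g_P\in I_P$ forces behavior on the $Q$-side) is laid out and cross-referenced with the placement of $g_Q$, each of the five identities falls out termwise.

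\textbf{Assembly.} Having checked (\ref{E1}) (from the lemma), (\ref{E2}), and (\ref{E3}), the definition of a $Dup$-$Dend$ bialgebra recalled in paragraph \ref{sect51} is satisfied, so $\h_{\SP}^+$ is a $Dup$-$Dend$ bialgebra, completing the proof. I would present the argument by first fixing the notation $g_{P\nwarrow Q}=g_Q$, $g_{PQ}=g_Q$, then doing the ideal-decomposition lemma for $\sqcup$, $.$, and $\nwarrow$ in turn, and finally reading off each of the eight required identities (three in (\ref{E2}), five in (\ref{E3})) from the corresponding case enumeration.
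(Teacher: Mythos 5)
Your overall architecture is the correct one and coincides with the paper's: the duplicial axioms (\ref{E1}) are the preceding lemma, the dendriform coalgebra axioms (\ref{E2}) follow by writing the coassociativity identity for $\tdelta$ as $\sum P^{(1)}\otimes P^{(2)}\otimes P^{(3)}$ and sorting the terms according to which factor contains $g_P$, and the compatibilities (\ref{E3}) are obtained by decomposing a non-trivial ideal $I$ of $PQ$ or of $P\nwarrow Q$ into $I\cap P$ and $I\cap Q$ and enumerating cases. Your treatment of (\ref{E2}) is sound.

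There is, however, a genuine error in the combinatorial lemma you rely on for (\ref{E3}), and as stated your enumeration cannot produce the required five-term formulas. The ideals appearing in the coproduct are $1$-ideals, i.e.\ up-sets for $\leq_1$; the total order $\leq_2$ is irrelevant to whether a subset is an ideal. In $PQ$ the elements of $P$ and of $Q$ are \emph{incomparable} for $\leq_1$, so an ideal of $PQ$ is an arbitrary, unconstrained pair (ideal of $P$, ideal of $Q$). Your claim that ``if $I_P\neq\emptyset$ then $I_Q=Q$,'' deduced from $x\leq_2 y$, is false, and it deletes terms that must be present: for instance the term $x'y\otimes x''$ of $\Delta_\prec(xy)$ arises exactly from $I\cap P$ non-trivial and $I\cap Q=\emptyset$, a case your constraint forbids. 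The only genuine constraints come from $\leq_1$ and from the placement of $g$: for $\Delta_\prec$ one needs $g_Q\notin I$, for $\Delta_\succ$ one needs $g_Q\in I$, and in $P\nwarrow Q$ one has the implication $g_P\in I\Rightarrow Q\subseteq I$ (since $g_P\leq_1 y$ for every $y\in Q$), but nothing forces $I\cap Q=Q$ when $I\cap P$ is merely non-empty. Relatedly, your bookkeeping of plain products versus $\nwarrow$-products is reversed: when $I\cap P=\emptyset$ and $I\cap Q$ is non-trivial, the complement $(P\nwarrow Q)\setminus I$ is $P\nwarrow(Q\setminus I)$, not a plain product, because the relation $x\leq_1 y\Leftrightarrow x\leq_1 g_P$ survives the restriction; the plain products in the last two lines of (\ref{E3}) occur in the \emph{ideal} factor, since $g_P\notin I$ forces every element of $I\cap P$ to be $\leq_1$-incomparable to $g_P$ and hence to all of $I\cap Q$. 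With these two points corrected, the case analysis does close up and yields (\ref{E3}) exactly as in the paper.
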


\begin{proof} The proof is similar to the proof of proposition 20 in \cite{Foissy3}. Nevertheless, in order to help the reader, we give here a complete proof.
Let us first prove that ($\h_{\SP}^+,\Delta_\prec,\Delta_\succ)$ is a dendriform coalgebra.
It is enough to prove (\ref{E2}) if $x=P$ is a nonempty special poset. We put, as $\tdelta$ is coassociative,
$(\tdelta \otimes Id) \circ \tdelta(P)=(Id \otimes \tdelta)\circ \tdelta(P)=\sum P^{(1)} \otimes P^{(2)} \otimes P^{(3)}$,
where $P^{(1)},P^{(2)},P^{(3)}$ are subposets of $P$. Then:
\[\left\{\begin{array}{rcccl}
(\Delta_\prec\otimes Id)\circ \Delta_\prec(P)&=&(Id \otimes \tdelta)\circ \Delta_\prec(P)
&=&\displaystyle \sum_{g_P \in P^{(1)}} P^{(1)} \otimes P^{(2)}\otimes P^{(3)},\\
(\Delta_\succ\otimes Id)\circ \Delta_\prec(P)&=&(Id \otimes \Delta_\prec)\circ \Delta_\succ(P)
&=&\displaystyle \sum_{g_P \in P^{(2)}} P^{(1)} \otimes P^{(2)}\otimes P^{(3)},\\
(\tdelta\otimes Id)\circ \Delta_\succ(P)&=&(Id \otimes \Delta_\succ)\circ \Delta_\succ(P)
&=&\displaystyle \sum_{g_P \in P^{(3)}} P^{(1)} \otimes P^{(2)}\otimes P^{(3)}.
\end{array}\right.\]
So $\h_{\SP}^+$ is a dendriform coalgebra. \\

Let us now prove axioms (\ref{E3}). It is enough prove these formulas if $x=P$, $y=Q$ are nonempty plane forests. 
Let $I$ be a non trivial ideal of $PQ$ or $P\nwarrow Q$. We put $I'=I \cap P$ and $I''=I \cap Q$. 
As $I$ is non trivial, $I'$ and $I''$ are not simultaneously empty and not simultaneously total.\\

Let us first compute $\Delta_\prec(PQ)$. We have to consider non trivial ideals $I$ of $PQ$, such that $g_{PQ} \notin I$.
As $g_{PQ}=g_Q$, $I''\neq Q$. So five case are possible.
\begin{itemize}
\item $I'=P$, $I''=\emptyset$: this gives the term $Q \otimes P$.
\item $I'=P$, $I''\neq \emptyset, Q$: this gives the term $Q'_\prec \otimes PQ''_\prec$.
\item $I'=\emptyset$, $I'' \neq \emptyset,Q$: this gives the term $PQ'_\prec \otimes PQ''_\prec$.
\item $I'\neq \emptyset,P$, $I''=\emptyset$: this gives the term $P'Q\otimes P''$.
\item $I'\neq \emptyset,P$, $I''\neq \emptyset,Q$: this gives the term $P'Q'_\prec \otimes P'' Q''_\prec$.
\end{itemize}

Let us compute $\Delta_\succ(PQ)$. We  have to consider non trivial ideals $I$ of $PQ$, such that $g_{PQ} \in I$.
As $g_{PQ}=g_Q$, $I''\neq \emptyset$. So five cases are possible:
\begin{itemize}
\item $I'=\emptyset$, $I''=Q$: this gives the term $P \otimes Q$.
\item $I'=\emptyset$, $I''\neq \emptyset, Q$: this gives the term $PQ'_\succ \otimes Q''_\succ$.
\item $I'=P$, $I'' \neq \emptyset,Q$; this gives the term $Q'_\succ \otimes PQ''_\succ$.
\item $I'\neq \emptyset,P$, $I''=Q$: this gives the term $P'\otimes P''Q$.
\item $I'\neq \emptyset,P$, $I''\neq \emptyset,Q$: this gives the term $P'Q'_\succ \otimes P'' Q''_\succ$.
\end{itemize}

We now compute $\Delta_\prec(P\nwarrow Q)$. We have to consider non trivial ideals $I$ of $P\nwarrow Q$, such that $g_{P\nwarrow Q} \notin I$. 
As $g_{P\nwarrow Q}=g_Q$, $I''\neq Q$. Moreover, if $g_P \in I$, then, as $I$ is an ideal, $Q \subseteq I$ so $I''=Q$: impossible. So $g_P \notin I'$. 
So three cases are possible.
\begin{itemize}
\item $I'=\emptyset$, $I'' \neq \emptyset,Q$; this gives the term $P\nwarrow Q'_\prec \otimes PQ''_\prec$.
\item $I'\neq \emptyset,P$, $I''=\emptyset$: this gives the term $P'_\prec \nwarrow Q\otimes P''_\nwarrow$.
\item $I'\neq \emptyset,P$, $I''\neq \emptyset,Q$: this gives the term $P'_\prec \nwarrow Q'_\prec \otimes P''_\prec Q''_\prec$.
\end{itemize}

Finally, let us compute $\Delta_\succ(P\nwarrow Q)$. We  have to consider non trivial ideals $I$ of $P\nwarrow Q$, such that $g_{P\nwarrow Q} \in I$.
As $g_{P\nwarrow Q}=g_Q$, $I''\neq \emptyset$. Moreover, if $g_P \in I'$, as $I$ is an ideal, $I''=Q$. As $I'$ and $I''$ are not simultaneously total,
this implies that $I' \neq P$. So five cases are possible:
\begin{itemize}
\item $I'=\emptyset$, $I''=Q$: this gives the term $P \otimes Q$.
\item $I'=\emptyset$, $I''\neq \emptyset, Q$: this gives the term $P\nwarrow Q'_\succ \otimes Q''_\succ$.
\item $I'\neq \emptyset,P$, $g_P \in I'$: this gives the term $P'_\succ \otimes P''_\succ\nwarrow Q$.
\item $I'\neq \emptyset,P$, $g_P \notin I'$, $I''=Q$: this gives the term $P'_\prec \otimes P''_\prec Q$.
\item $I'\neq \emptyset,P$, $g_P \notin I'$,  $I''\neq \emptyset,Q$: this gives the term $P'_\prec \nwarrow Q'_\succ \otimes P''_\prec Q''_\succ$.
\end{itemize}
So $\h_\SP^+$ is a $Dup$-$Dend$ bialgebra. \end{proof}

\begin{remark}\begin{enumerate}
\item As a consequence, the augmentation ideals $\h_{\SP}^+$, $\h_{\HOP}^+$, $\h_{\SPP}^+$, $\h_{\OF}^+$, $\h_{\HOF}^+$, $\h_{\SWNP}^+$ 
and $\h_{\SPF}^+$ are $Dup$-$Dend$ bialgebras. 
\item The rigidity theorem of  \cite{Foissy3} implies that $\h_{\SP}$, $\h_{\HOP}$, $\h_{\SPP}$, $\h_{\OF}$, $\h_{\HOF}$, $\h_{\SWNP}$ and $\h_{\SPF}$
are isomorphic to non commutative Connes-Kreimer Hopf algebras of decorated plane trees, with particular graded sets of decorations. 
The cardinal of the components of these graded sets can be computed by manipulations of formal series. For example:
\[\begin{array}{|c|c|c|c|c|c|c|c|c|}
\hline n&1&2&3&4&5&6&7&8\\
\hline|\D_{\SP}(n)|&1&1&10&148&3\:336&112\:376&5\:591\:196&406\:621\:996\\
\hline|\D_{\OF}(n)|&1&1&7&66&786&11\:278&189\:391&3\:648\:711\\
\hline|\D_{\HOF}(n)|=|\D_{\SPF}(n)|&1&0&1&6&39&284&2\:305&20\:682\\
\hline|\D_{\SWNP}(n)|&1&0&1&4&17&76&353&1\:688\\
\hline|\D_{\SPF}(n)|&1&0&0&0&0&0&0&0\\
\hline \end{array}\]
We obtain sequences A122705 for $\D_{\OF}$ and A122827 for $\D_{\HOF}$ in \cite{Sloane}.
\end{enumerate}\end{remark}

\subsection{Application to $\FQSym$}

Let $\sigma\in \S_n$ be a permutation ($n\geq 1$). We put:
\begin{align*}
\Delta_\prec(\sigma)&=\sum_{k=\sigma^{-1}(n)}^{n-1} \sigma^{(k)}_1 \otimes \sigma^{(k)}_2, &
\Delta_\succ(\sigma)&=\sum_{k=1}^{\sigma^{-1}(n)-1} \sigma^{(k)}_1 \otimes \sigma^{(k)}_2.
\end{align*}
Remark that $\Delta_\prec+\Delta_\succ=\tdelta$. \\

\begin{example}
\begin{align*}
\Delta_\prec((12543))&=(123) \otimes (21)+(1243) \otimes (1),&
\Delta_\succ((12543))&=(1)\otimes (1432)+(12)\otimes (321).
\end{align*}
\end{example}

Let $\sigma,\tau$ be two permutations of respective degrees $k$ and $l$, with $k,l\geq 1$. We put:
\[\sigma \nwarrow \tau=\sum_{\substack{\zeta \in Sh(k,l)\\ \zeta(k+1) \geq \zeta(\sigma^{-1}(k))}} (\sigma \otimes \tau) \circ \zeta^{-1}.\]
In other terms, $\sigma \nwarrow \tau$ is the sum of the shufflings of the word representing $\sigma$ and the word representing $\tau$ shifted by $k$,
such that the letters of $\tau$ are all after the greatest letter of $\sigma$. In particular, if $\sigma^{-1}(k)=k$, 
then $\sigma \nwarrow \tau=\sigma \otimes \tau$. \\

\begin{example}
\begin{align*}
123\nwarrow 12&=12345,\\
132\nwarrow 12&=13245+13425+13452,\\
312\nwarrow 12&=31245+31425+34125+34152+34512.
\end{align*}
\end{example}

\begin{prop} 
These products and coproducts make $\FQSym^+$ a $Dup$-$Dend$ bialgebra. Moreover, $\Theta:\h_{\SP}^+\longrightarrow \FQSym^+$ 
is a morphism of $Dup$-$Dend$ bialgebras.
\end{prop}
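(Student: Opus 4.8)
The strategy is to deduce the statement from the fact --- established above --- that $\h_{\SP}^+$ is a $Dup$-$Dend$ bialgebra, using that $\Theta$ is surjective. Since $\Theta:\h_{\SP}\longrightarrow\FQSym$ is a surjective morphism of Hopf algebras, it already intertwines the associative products and the reduced coproducts $\tdelta$, and maps $\h_{\SP}^+$ onto $\FQSym^+$. Hence it suffices to verify the three remaining intertwining relations on $\h_{\SP}^+$:
$$\Theta(P\nwarrow Q)=\Theta(P)\nwarrow\Theta(Q),\qquad (\Theta\otimes\Theta)\circ\Delta_\prec=\Delta_\prec\circ\Theta,\qquad (\Theta\otimes\Theta)\circ\Delta_\succ=\Delta_\succ\circ\Theta.$$
Once these are known, every multilinear relation defining a $Dup$-$Dend$ bialgebra (axioms \ref{E1}, \ref{E2} and \ref{E3}) transfers from $\h_{\SP}^+$ to $\FQSym^+$: given arguments in $\FQSym^+$, lift them along $\Theta$, apply the relation in $\h_{\SP}^+$, and push the result forward through $\Theta$. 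This shows at once that $\FQSym^+$ is a $Dup$-$Dend$ bialgebra and that $\Theta$ is a morphism of such.

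To prove $\Theta(P\nwarrow Q)=\Theta(P)\nwarrow\Theta(Q)$ I would describe $S_{P\nwarrow Q}$ explicitly. Put $k=|P|$, $l=|Q|$, and identify $P$ with $\{1,\dots,k\}$ and $Q$ with $\{k+1,\dots,k+l\}$ through their total orders, so that $g_P$ becomes the letter $k$. A permutation $\rho\in\S_{k+l}$ lies in $S_{P\nwarrow Q}$ if and only if its subword on the letters $1,\dots,k$ belongs to $S_P$, its subword on $k+1,\dots,k+l$ (shifted down by $k$) belongs to $S_Q$, and every letter $x$ of $P$ with $x\leq_1 g_P$ appears in $\rho$ before all letters of $Q$; here one uses that the only $\leq_1$-relations of $P\nwarrow Q$ linking $P$ to $Q$ are $x\leq_1 y$ with $x\leq_1 g_P$ and $y\in Q$. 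Since any element of $S_P$ puts such an $x$ before $g_P=k$, the last condition amounts to requiring that the letter $k$ precede all of $k+1,\dots,k+l$ in $\rho$. That is exactly the condition cutting out the shufflings occurring in $\sigma\nwarrow\tau$, so summing over $\sigma\in S_P$, $\tau\in S_Q$ yields the identity.

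For the coproducts I would exhibit the bijection realizing the (known) coalgebra-morphism identity $\tdelta(\Theta(P))=(\Theta\otimes\Theta)(\tdelta(P))$. Let $P$ have $n$ elements, identified with $\{1,\dots,n\}$. For $\rho\in S_P$ and $1\leq m\leq n-1$, cutting the word $\rho$ after position $m$ gives the pair $\bigl(\rho_1^{(m)},\rho_2^{(m)}\bigr)$, and $I=\{\rho(m+1),\dots,\rho(n)\}$ is then a non-trivial ideal of $P$; the map $(\rho,m)\mapsto\bigl(I,\rho_1^{(m)},\rho_2^{(m)}\bigr)$ is a bijection onto the set of triples made of a non-trivial ideal $I$ of $P$ together with linear extensions of $P\setminus I$ and of $I$. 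Under this bijection $g_P$ corresponds to the letter $n$, hence $g_P\notin I$ if and only if $n$ is among the first $m$ letters of $\rho$, i.e. $m\geq\rho^{-1}(n)$ --- precisely the summation range in $\Delta_\prec(\rho)$ --- and $g_P\in I$ if and only if $m<\rho^{-1}(n)$, the range in $\Delta_\succ(\rho)$. Isolating these partial sums on both sides of the identity gives $(\Theta\otimes\Theta)\circ\Delta_\prec=\Delta_\prec\circ\Theta$ and $(\Theta\otimes\Theta)\circ\Delta_\succ=\Delta_\succ\circ\Theta$.

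The only genuine obstacle is the product relation: one must check that the requirement ``$\tau$ entirely after the largest letter of $\sigma$'' faithfully encodes being a linear extension of $P\nwarrow Q$ --- in particular that it imposes no constraint between $Q$ and the part of $P$ lying strictly below $g_P$ --- which is exactly where the $\leq_2$-maximality of $g_P$ and the definition of the grafting $\nwarrow$ on special posets intervene. Everything else (the passage of the $Dup$-$Dend$ axioms through the surjection $\Theta$, and the compatibility of $\Theta$ with the associative product and with $\tdelta$, already available) is formal.
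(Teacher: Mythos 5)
Your proof is correct and follows essentially the same route as the paper: the heart of both arguments is the explicit description of $S_{P\nwarrow Q}$ (reducing the cross-constraints between $P$ and $Q$ to ``the letter $g_P$ precedes all of $Q$'') together with the cut-position bijection $(\rho,m)\mapsto(I,\rho_1^{(m)},\rho_2^{(m)})$ that tracks whether $g_P$ lies in the ideal. The only cosmetic difference is the final transfer of the $Dup$-$Dend$ axioms to $\FQSym^+$: you push them through the surjection $\Theta:\h_{\SP}^+\twoheadrightarrow\FQSym^+$ by lifting arguments, while the paper transports them through the isomorphism $\Theta_{\mid \h_{\HOF}}$; both are valid.
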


\begin{proof} We first prove the compatibility of $\Theta$ with $\nwarrow$. Let $P$ and $Q$ be two nonempty special posets, 
of respective degrees $k$ and $l$. We first show:
\[S_{P \nwarrow Q}=\bigsqcup_{\sigma \in S_P,\:\tau \in S_Q} \bigsqcup_{\substack{\zeta \in Sh(k,l)\\ \zeta(k+1)\geq \zeta(\sigma^{-1}(k))}}
\{(\sigma \otimes \tau) \circ \zeta^{-1}\}.\]

$\subseteq$. Let $\chi  \in S_{P \nwarrow Q}$. There exists a unique $(\sigma,\tau,\zeta)\in \Sigma_k \times \Sigma_l\times Sh(k,l)$, such that 
$\chi=(\sigma \otimes \tau) \circ \zeta^{-1}$. Let us prove that $\sigma \in S_P$. If $i >_1 j$ in $P$,
then $i>_1 j$ in $P \nwarrow Q$, so:
\begin{align*}
\chi^{-1}(i)&\geq \chi^{-1}(j),\\
\zeta \circ (\sigma^{-1} \otimes \tau^{-1})(i) &\geq \zeta \circ (\sigma^{-1} \otimes \tau^{-1})(j),\\
\zeta \circ \sigma^{-1}(i) &\geq\zeta\circ \sigma^{-1}(j),\\
\sigma^{-1}(i)&\geq\sigma^{-1}(j),
\end{align*}
as $\zeta$ is increasing on $\{1,\ldots,k\}$. So $\sigma \in S_P$. Similarly, $\tau \in S_Q$. Moreover, 
the element $\tau(1)+k$ belongs to $Q$ in $P \nwarrow Q$, so $\tau(1)+k >_1 k$ in $P\nwarrow Q$. As a consequence:
\begin{align*}
\chi^{-1}(\tau(1)+k)&\geq \chi^{-1}(k),\\
\zeta \circ (\sigma^{-1} \otimes \tau^{-1})(\tau(1)+k) &\geq \zeta \circ (\sigma^{-1} \otimes \tau^{-1})(k),\\
\zeta(k+1)&\geq\zeta\circ \sigma^{-1}(k).
\end{align*}

$\supseteq$. Let $\sigma \in S_P$, $\tau \in S_Q$ and $\zeta \in Sh(k,l)$, such that $\zeta(k+1)\geq \zeta(\sigma^{-1}(k))$. We put
$\chi=(\sigma \otimes \tau) \circ \zeta^{-1}$. Let $i,j$ be two elements of $P \nwarrow Q$, such that $i>_1 j$. Three cases can occur:
\begin{itemize}
\item $i,j$ are elements of $P$. Then $\sigma^{-1}(i)\geq \sigma^{-1}(j)$, so $(\sigma^{-1}\otimes \tau^{-1})(i) \geq (\sigma^{-1}\otimes \tau^{-1})(j) $, 
and finally $\sigma^{-1}(i)=\zeta \circ(\sigma^{-1}\otimes \tau^{-1})(i) \geq \zeta \circ (\sigma^{-1}\otimes \tau^{-1})(j)=\sigma^{-1}(j)$.
\item $i,j$ are elements of $Q$. The same proof holds.
\item $i$ is an element of $Q$ and $j$ is an element of $P$. Then $i >_1 k$ in $P \nwarrow Q$.
By definition of $P \nwarrow Q$,  $k >_1 j$ in $P$, so by the first point $\sigma^{-1}(k)\geq \sigma^{-1}(j)$.

Moreover, $i+1 \geq k+1$, so $\sigma^{-1}(i) \geq \zeta(k+1)$ as $\zeta$ is increasing on $\{k+1,\ldots,k+l\}$. Then:
\[\sigma^{-1}(i) \geq \zeta(k+1)\geq \zeta(\sigma^{-1}(k))=\sigma^{-1}(k) \geq \sigma^{-1}(j).\]
\end{itemize}
Finally, for any nonempty special posets $P$ and $Q$ of respective degrees $k$ and $l$:
\[\Theta(\P \nwarrow Q)=\sum_{\sigma \in S_P,\:\tau \in S_Q} \sum_{\substack{\zeta \in Sh(k,l)\\ \zeta(k+1)\geq \zeta(\sigma^{-1}(k))}}
(\sigma \otimes \tau) \circ \zeta^{-1}=\sum_{\sigma \in S_P,\:\tau \in S_Q} \sigma \nwarrow \tau=\Theta(P) \nwarrow \Theta(Q).\]

We now prove the compatibility of $\Theta$ and the two coproducts $\Delta_\prec$ and $\Delta_\succ$.
Let $P \in \SP(n)$. As $\Theta$ is a morphism of Hopf algebras, there exists a bijection:
\[\left\{\begin{array}{rcl}
S_P\times \{1,\ldots,n-1\}&\longmapsto&\displaystyle \bigsqcup_{\mbox{\scriptsize $I$ non trivial ideal of $P$}} S_{P\setminus I} \times S_I\\
(\sigma,k)&\longmapsto&\left(\sigma_1^{(k)},\sigma_2^{(k)}\right),
\end{array}\right.\]
where this pair belongs to the term of the union indexed by $I=\{\sigma(k+1),\ldots,\sigma(n)\}$.
So, if $(\sigma,k) \in S_F\times \{1,\ldots,n-1\}$, $k \geq \sigma^{-1}(n)$ if, and only if, $n=g_P$ is not an element of $I$. So:
\[(\Theta \otimes \Theta) \circ \Delta_\prec(F)=\sum_{g_P \notin I} P\setminus I \otimes I
\sum_{\substack{\sigma \in S_{P\setminus I}\\ \tau \in S_I}} \sigma \otimes \tau
=\sum_{\sigma \in S_P} \sum_{k=\sigma^{-1}(n)}^{n-1} \sigma_1^{(k)} \otimes \sigma_2^{(k)}
=\sum_{\sigma \in S_P} \Delta_\prec(\sigma)=\tdelta \circ \Theta(F).\]
Similarly, $(\Theta \otimes \Theta) \circ \Delta_\succ=\Delta_\succ \circ \Theta$. \\

As $\Theta_{\mid \h_{\HOF}}\longrightarrow \FQSym$ is an isomorphism and $\h_{\HOF}^+$ is a $Dup$-$Dend$ bialgebra,
$\FQSym^+$ is also a $Dup$-$Dend$ bialgebra. \end{proof}

\begin{remark} \begin{enumerate}
\item It is of course possible to prove directly that $\FQSym^+$ a $Dup$-$Dend$ bialgebra.
\item A similar structure of $Dup$-$Dend$ bialgebra structure exists on the Hopf algebra of parking functions $\PQSym$ \cite{Novelli}, 
replacing, for a parking function $\sigma$, $\sigma^{-1}(n)$ by the maximal integer $i$ such that $\sigma(i)$ is maximal.
\end{enumerate}\end{remark}

\section{Dendriform structures on special plane forests}

The aim of this section is to prove that the restriction of the pairing to $\h_{\SPF}$ is nondegenerate (corollary \ref{37}). We first recall the classical result:

\begin{lemma}
The restriction of $\langle-,-\rangle$ to $\K[\tun]$ is nondegenerate if, and only if, the characteristic of $\K$ is zero.
\end{lemma}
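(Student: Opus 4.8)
The plan is to write down the Gram matrix of $\langle-,-\rangle$ restricted to $K[\tun]$ in the monomial basis, observe that it is diagonal, and read off the condition for nondegeneracy. First I would note that $K[\tun]$ is the polynomial algebra on the one-vertex poset $\tun$, so as a graded vector space it has basis $(\tun^n)_{n\geq 0}$, where $\tun^n=\tun\cdots\tun$ is the special poset with $n$ vertices, trivial order $\leq_1$, and total order $\leq_2$. Since the pairing of $\h_{\DP}$ is homogeneous (two double posets of different cardinalities pair to $0$) and each homogeneous component $K\tun^n$ of $K[\tun]$ is one-dimensional, the restriction of $\langle-,-\rangle$ to $K[\tun]$ is nondegenerate if and only if $\langle \tun^n,\tun^n\rangle\neq 0$ in $K$ for every $n\geq 0$.

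Second, I would compute $\langle \tun^n,\tun^n\rangle$. A picture $\sigma\in S(\tun^n,\tun^n)$ is a bijection of an $n$-element set, and both implications in the definition of a picture are vacuous, because $\leq_1$ is the trivial order on $\tun^n$; hence $S(\tun^n,\tun^n)=\S_n$ and $\langle \tun^n,\tun^n\rangle=n!$, while $\langle \tun^n,\tun^m\rangle=0$ for $n\neq m$. (Alternatively one may invoke $\Theta$: the linear extensions of the antichain $\tun^n$ are all of $\S_n$, so $\Theta(\tun^n)=\sum_{\sigma\in\S_n}\sigma$, and $\langle\Theta(\tun^n),\Theta(\tun^m)\rangle_{\FQSym}=\sum_{\sigma\in\S_n,\,\tau\in\S_m}\delta_{\sigma,\tau^{-1}}=\delta_{n,m}\,n!$.) Thus the Gram matrix is the diagonal matrix with entries $n!$, $n\geq 0$.

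Finally I would conclude. If $\mathrm{char}(K)=0$, every $n!$ is invertible in $K$, so the restricted form is nondegenerate. Conversely, if $\mathrm{char}(K)=p>0$, then $p!=0$ in $K$, so $\tun^p\neq 0$ lies in the kernel of $\langle-,-\rangle_{\mid K[\tun]}$ (it pairs to $0$ against every $\tun^m$), and the form is degenerate. There is essentially no obstacle here; the only point that deserves a word of care is the reduction to a degree-by-degree check, which is immediate from the homogeneity of the pairing.
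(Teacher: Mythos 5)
Your proof is correct and follows the same route as the paper: reduce to the one-dimensional homogeneous components by homogeneity of the pairing, compute $\langle\tun^n,\tun^n\rangle=n!$ (the paper leaves this as "not difficult to show"; your observation that both picture conditions are vacuous for the antichain supplies it), and conclude by examining when $n!$ vanishes in $K$.
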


\begin{proof} As the homogeneous components of $\K[\tun]$ are one-dimensional, this restriction is nondegenerate if, and only if,
$\langle \tun^n,\tun^n\rangle$ is a non-zero element of $\K$ for all $n\in \mathbb{N}$. 
Moreover, it is not difficult to show that $\langle\tun^n,\tun^n\rangle=n!$. \end{proof}

\subsection{Dendriform coproducts}

\begin{notation}
Let $P$ be a plane poset, seen as a special poset. The smallest element for the total order of $P$ will be denoted by $s_P$.
\end{notation}

\begin{prop}
For any nonempty plane poset $P$, we put:
\begin{align*}
\Delta'_\prec(P)&=\sum_{\substack{\mbox{\scriptsize $I$ non trivial ideal of $P$}\\ s_P \notin I}} P\setminus I \otimes I,&
\Delta'_\succ(P)&=\sum_{\substack{\mbox{\scriptsize $I$ non trivial ideal of $P$}\\ s_P \in I}} P\setminus I \otimes I.
\end{align*}
Then $(\h_{\SPP}^+,\Delta'_\prec,\Delta'_\succ)$ is a dendriform coalgebra. Moreover, for all $x,y\in \h_{\SPP}^+$:
\begin{align}
\label{E4} \Delta'_\prec(xy)&=x \otimes y+x'_\prec y\otimes y''_\prec+x'_\prec \otimes x''_\prec y+xy'\otimes y''+x'_\prec y'\otimes x''_\prec y'',\\
\label{E5} \Delta'_\succ(xy)&=y\otimes x+x'_\succ y\otimes x''_\succ+x'_\succ \otimes x''_\succ y+y'\otimes xy''+x'_\succ y'\otimes x''_\succ y''.
\end{align}\end{prop}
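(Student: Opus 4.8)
The plan is to mirror the structure of the proof that $\h_{\SP}^+$ is a $Dup$-$Dend$ bialgebra, but now tracking the \emph{smallest} element $s_P$ instead of the maximal element $g_P$. Since the statement only asks for the dendriform coalgebra axioms (\ref{E2}) and the two distributivity relations (\ref{E4})--(\ref{E5}) with respect to the concatenation product (there is no $\nwarrow$-type product mentioned here), this is strictly a subset of the work done in Proposition on $\h_\SP^+$, with one sign/side change: the roles of "$I$ contains the marked element" and "$I\setminus I$ contains it" are swapped because an ideal $I$ can never contain $s_P$ unless $I = P$ (as $s_P \leq_2 x$ for all $x$, but also we need to be careful: $s_P$ is $\leq_2$-minimal, not $\leq_1$-minimal, so $s_P$ \emph{can} lie in a nontrivial ideal). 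So first I would verify the dendriform coalgebra axioms exactly as in the $\Delta_\prec/\Delta_\succ$ proof: writing $(\tdelta\otimes Id)\circ\tdelta(P) = \sum P^{(1)}\otimes P^{(2)}\otimes P^{(3)}$ with each $P^{(j)}$ a subposet of $P$ coming from a chain of ideals, the three identities of (\ref{E2}) follow by splitting according to which of $P^{(1)}, P^{(2)}, P^{(3)}$ contains $s_P$.

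For the distributivity relations, I would take $x = P$, $y = Q$ two nonempty plane posets (seen as special posets) and analyze the nontrivial ideals $I$ of $PQ$. Writing $I' = I\cap P$ and $I'' = I\cap Q$, recall from the definition of the product that every element of $P$ is $\leq_1$-incomparable to every element of $Q$, so $I'$ is an ideal of $P$ and $I''$ is an ideal of $Q$, independently; $I$ is nontrivial iff $(I', I'')$ is neither $(\emptyset,\emptyset)$ nor $(P,Q)$. The key combinatorial input is: $s_{PQ} = s_P$ (the smallest element of $PQ$ for the induced total order lies in $P$, since in $PQ$ the order $\leq_2$ puts all of $P$ before all of $Q$), so $s_{PQ}\in I$ iff $s_P\in I'$. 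I would then enumerate cases for $(I',I'')$ and, within each, split on whether $s_P\in I'$: this produces exactly the five terms of (\ref{E4}) when $s_{PQ}\notin I$ and the five terms of (\ref{E5}) when $s_{PQ}\in I$. Concretely for $\Delta'_\prec(PQ)$: the case $I'=\emptyset$, $I''=Q$ gives $P\otimes Q$ (here $s_P\notin I$ automatically); $I'$ nontrivial with $s_P\notin I'$ and $I''=Q$ gives $x'_\prec y\otimes x''_\prec$; $I'$ nontrivial with $s_P\notin I'$, $I''=\emptyset$ gives $x'_\prec\otimes x''_\prec y$; $I'=P$, $I''$ nontrivial gives $xy'\otimes y''$; $I'$ nontrivial with $s_P\notin I'$, $I''$ nontrivial gives $x'_\prec y'\otimes x''_\prec y''$. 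Note that $x'_\prec$ ranges over $P\setminus I'$ for ideals $I'$ with $s_P\notin I'$, which matches the Sweedler convention $\Delta'_\prec(x) = x'_\prec\otimes x''_\prec$; the case $I'=P$ contributes the full $\tdelta(y)$ on the $Q$-side. The computation for $\Delta'_\succ(PQ)$ is symmetric, giving the five terms of (\ref{E5}), with $y\otimes x$ coming from $I' = P$, $I'' = \emptyset$. Finally I would remark that stability of $\h_{\SPP}^+$ under $\Delta'_\prec$ and $\Delta'_\succ$ is already noted (any subposet of a plane poset is a plane poset), so these coproducts are well-defined on $\h_{\SPP}^+$, and bilinearity extends everything from posets to all of $\h_{\SPP}^+$.

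The main obstacle — though a minor one — is getting the bookkeeping of which Sweedler piece sits on which tensor factor exactly right, in particular making sure that when $I' = P$ (so the $P$-part is "used up") the remaining cut happens entirely inside $Q$ and contributes $\tdelta(Q)$ rather than $\Delta'_\prec(Q)$ or $\Delta'_\succ(Q)$, and conversely that when $I'' = \emptyset$ or $I'' = Q$ the cut is entirely inside $P$. One must also double-check the claim $s_{PQ} = s_P$ against the definition of the induced total order on a product of plane posets: since in $PQ$ we declare $x\leq_2 y$ for $x\in P$, $y\in Q$, every element of $P$ precedes every element of $Q$ in $\leq_2$, hence the $\leq_2$-minimum of $PQ$ is the $\leq_2$-minimum of $P$, i.e. $s_P$. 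With that in hand the case analysis is mechanical and parallels \cite{Foissy3} line by line, so I would state the coalgebra axioms briefly and then present the two five-case enumerations, exactly as in the proof of the $\h_\SP^+$ proposition above.
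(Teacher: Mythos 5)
Your overall strategy is exactly the paper's: you verify the dendriform coalgebra axioms by writing the coassociative iterated coproduct as $\sum P^{(1)}\otimes P^{(2)}\otimes P^{(3)}$ and splitting according to which factor contains $s_P$, and you derive (\ref{E4})--(\ref{E5}) by decomposing a nontrivial ideal $I$ of $PQ$ as $I'=I\cap P$, $I''=I\cap Q$, using that $I'$ and $I''$ are independent ideals and that $s_{PQ}=s_P$, so that $s_{PQ}\in I$ if and only if $s_P\in I'$. All of these ingredients are correct and are the ones the paper uses.

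However, your concrete five-case enumeration for $\Delta'_\prec(PQ)$ misassigns terms in three of the five cases, even though the five terms you list do sum to the right-hand side of (\ref{E4}). First, the case you label ``$I'=P$, $I''$ nontrivial'' cannot occur under the constraint $s_{PQ}\notin I$, since $I'=P$ forces $s_P\in I$; the term $xy'\otimes y''$ you attach to it in fact comes from the case $I'=\emptyset$ with $I''$ a nontrivial ideal of $Q$ (then $I=I''$ and $PQ\setminus I=P(Q\setminus I'')$), which is the case missing from your list. Second, your cases ``$I'$ nontrivial, $I''=Q$'' and ``$I'$ nontrivial, $I''=\emptyset$'' are swapped: when $I''=Q$ the ideal is $I'Q$ and the complement is $P\setminus I'$, giving $x'_\prec\otimes x''_\prec y$, whereas when $I''=\emptyset$ the complement is $(P\setminus I')Q$, giving $x'_\prec y\otimes x''_\prec$. (Note in passing that the second term of (\ref{E4}) as displayed should read $x'_\prec y\otimes x''_\prec$, which is what the case analysis actually produces.) These are bookkeeping slips rather than a flaw in the method---the total over all admissible pairs $(I',I'')$ is still (\ref{E4})---but as written three of your case-by-case identifications are false and would have to be corrected for the argument to be sound line by line; the same care is needed in the symmetric computation of $\Delta'_\succ(PQ)$.
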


\begin{proof} Let us first prove the (\ref{E2}) for all $x\in \h_{\SPP}^+$.
It is enough to prove this if $x=P$ is a nonempty special poset. We put, as $\tdelta$ is coassociative,
$(\tdelta \otimes Id) \circ \tdelta(P)=(Id \otimes \tdelta)\circ \tdelta(P)=\sum P^{(1)} \otimes P^{(2)} \otimes P^{(3)}$,
where $P^{(1)},P^{(2)},P^{(3)}$ are subposets of $P$. Then:
\[\left\{\begin{array}{rcccl}
(\Delta'_\prec\otimes Id)\circ \Delta'_\prec(P)&=&(Id \otimes \tdelta)\circ \Delta'_\prec(P)
&=&\displaystyle \sum_{s_P \in P^{(1)}} P^{(1)} \otimes P^{(2)}\otimes P^{(3)},\\
(\Delta'_\succ\otimes Id)\circ \Delta'_\prec(P)&=&(Id \otimes \Delta'_\prec)\circ \Delta'_\succ(P)
&=&\displaystyle \sum_{s_P \in P^{(2)}} P^{(1)} \otimes P^{(2)}\otimes P^{(3)},\\
(\tdelta\otimes Id)\circ \Delta'_\succ(P)&=&(Id \otimes \Delta'_\succ)\circ \Delta'_\succ(P)
&=&\displaystyle \sum_{s_P \in P^{(3)}} P^{(1)} \otimes P^{(2)}\otimes P^{(3)}.
\end{array}\right.\]
So $\h_{\SP}^+$ is a dendriform coalgebra.\\

 It is enough prove formulas (\ref{E4}) and (\ref{E5}) if $x=P$, $y=Q$ are nonempty plane forests.  Let $I$ be a non trivial ideal of $PQ$. 
We put $I'=I \cap P$ and $I''=I \cap Q$.  As $I$ is non trivial, $I'$ and $I''$ are not simultaneously empty and not simultaneously total.\\

Let us first compute $\Delta'_\prec(PQ)$. We have to consider non trivial ideals $I$ of $PQ$, such that $s_{PQ} \notin I$.
As $s_{PQ}=s_P$, $I'\neq P$. So five case are possible.
\begin{itemize}
\item $I'=\emptyset$, $I''=Q$: this gives the term $P \otimes Q$.
\item $I'=\emptyset$, $I''\neq \emptyset, Q$: this gives the term $PQ' \otimes Q''$.
\item $I'\neq \emptyset,P$, $I''=\emptyset$: this gives the term $P'_\prec Q\otimes P''_\prec$.
\item $I'\neq \emptyset,P$, $I''=Q$: this gives the term $P'_\prec\otimes P''_\prec Q$.
\item $I'\neq \emptyset,P$, $I''\neq \emptyset,Q$: this gives the term $P'_\prec Q'\otimes P''_\prec Q''$.
\end{itemize}
The proof of formula (\ref{E5}) is similar. \end{proof}

\begin{remark}\begin{enumerate}
\item In other words, $(\h_{\SPP}^+,.^{op},(\Delta'_\succ)^{op},(\Delta'_\prec)^{op})$ is a codendriform bialgebra in the sense of \cite{Foissy4}.
\item $\h_{\SPF}^+$ is clearly stable under both coproducts $\Delta'_\prec$ et $\Delta'_\succ$, so $(\h_{\SPF}^+,.^{op},(\Delta'_\succ)^{op},(\Delta'_\prec)^{op})$
is a codendriform subcoalgebra of $\h_{\SPP}^+$.
\end{enumerate}\end{remark}

\subsection{Dendriform products on special plane forests}

From \cite{Foissy5}, $\h_{\SPF}^+$ is the free dendriform algebra generated by $\tun$. Moreover, for all nonempty plane forest $F$,
$\tun \prec F=B^+(F)$, the rooted tree obtained by grafting the roots of $F$ on a common root. It is also proved that $(\h_{\SPF}^+,\prec,\succ,\tdelta^{op})$
is a dendriform Hopf algebra \cite{LR2}, so, for all $x,y\in \h_{\SPF}^+$:
\begin{align}
\label{E6} \tdelta(x\prec y)&=x \otimes y+x\prec y'\otimes y''+x'\otimes x''y+x'\prec y\otimes x''+x'\prec y'\otimes x''y'',\\
\label{E7} \tdelta(x\succ y)&=y\otimes x+x\succ y'\otimes y''+y'\otimes xy''+x'\succ y\otimes x''+x'\succ y'\otimes x''y''.
\end{align}

\begin{prop}
For all $x,y\in \h_{\SPF}^+$:
\begin{align}
\label{E8} \Delta'_\prec(x\prec y)&=x\otimes y+x\prec y'\otimes y''+x'_\prec \otimes x''_\prec y+x'_\prec \prec y\otimes x''_\prec+x'_\prec \prec y'\otimes x''_\prec y'',\\
\label{E9} \Delta'_\succ(x\prec y)&=x'_\succ \otimes x''_\succ y+x'_\succ \prec y\otimes x''_\succ+x'_\succ \prec y'\otimes x''_\succ y'',\\
\label{E10} \Delta'_\prec(x\succ y)&=x'_\prec \succ y\otimes x''_\prec+x \succ y'\otimes y''+x'_\prec \succ y'\otimes x''_\prec y'',\\
\label{E11} \Delta'_\succ(x\succ y)&=y\otimes x+y'\otimes xy''+x'_\succ \succ y\otimes x''_\succ+x'_\succ \succ y'\otimes x''_\succ y''.
\end{align}\end{prop}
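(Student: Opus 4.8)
The plan is to obtain (\ref{E8})--(\ref{E11}) by splitting the already-established coproduct formulas (\ref{E6}) and (\ref{E7}) according to the position of the minimal element. The starting observation is elementary: by the very definitions of $\Delta'_\prec$ and $\Delta'_\succ$ one has $\Delta'_\prec+\Delta'_\succ=\tdelta$, and a cut $a'\otimes a''$ occurring in $\tdelta(a)$ contributes to $\Delta'_\prec(a)$ precisely when the smallest element $s_a$ lies in the left tensorand $a'$, and to $\Delta'_\succ(a)$ otherwise. Since (\ref{E6}) and (\ref{E7}) already describe $\tdelta(x\prec y)$ and $\tdelta(x\succ y)$ for $x,y\in\h_{\SPF}^+$, the only thing left to pin down is, for each of the five terms of each formula, on which side of the tensor the minimal element of $x\prec y$, respectively $x\succ y$, sits.

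The crucial point, and the one I expect to be the main obstacle, is the claim that in \emph{both} dendriform products the minimal element is inherited from the left argument $x$. It is enough to check this when $x=P$ and $y=Q$ are nonempty plane forests; one then shows that every plane forest occurring with nonzero coefficient in $P\prec Q$ or in $P\succ Q$ carries a canonical embedded copy of $P$, through which the distinguished vertex $s_P$ becomes the global minimum of the product. I would establish this by induction along the dendriform construction of $\h_{\SPF}^+$ recalled from \cite{Foissy5}: the base case is $\tun\prec F=B^+(F)$, whose minimal element is the root, that is, the copy of $\tun$; for the inductive step one uses the dendriform relations, for instance $(a\prec b)\prec c=a\prec(bc)$ and $(a\succ b)\prec c=a\succ(b\prec c)$ together with their $\succ$-counterparts, none of which moves the leftmost generator carrying the minimum. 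An alternative, more computational route that avoids this claim is to redo, for each plane forest appearing in $x\prec y$ and $x\succ y$, the ideal-by-ideal bookkeeping already used in the proofs of the preceding propositions.

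Granting the claim, the remainder is just a sorting of terms. In (\ref{E6}) the terms $x\otimes y$ and $x\prec y'\otimes y''$ have the whole of $x$, hence $s_{x\prec y}$, in the left tensorand, so they go into $\Delta'_\prec(x\prec y)$; in the three terms $x'\otimes x''y$, $x'\prec y\otimes x''$ and $x'\prec y'\otimes x''y''$ the factor $x$ is cut as $\tdelta(x)=x'\otimes x''$, and the part of this cut lying in $\Delta'_\prec(x)$ (the cuts that keep $s_x=s_{x\prec y}$ on the left) produces the matching term of $\Delta'_\prec(x\prec y)$, while the part in $\Delta'_\succ(x)$ produces the term of $\Delta'_\succ(x\prec y)$; reading this off gives exactly (\ref{E8}) and (\ref{E9}). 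Splitting (\ref{E7}) by the same rule, with $s_{x\succ y}$ again coming from $x$ and therefore landing on whichever side the $x$-block, or the relevant half of $\tdelta(x)$, lies, produces exactly (\ref{E10}) and (\ref{E11}). As a built-in consistency check, summing the two formulas obtained from (\ref{E6}) returns the right-hand side of (\ref{E6}), and similarly for (\ref{E7}), since $\Delta'_\prec(x)+\Delta'_\succ(x)=\tdelta(x)$; a direct verification in degrees $\leq 3$ then rules out any sign or labelling slip.
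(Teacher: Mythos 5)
Your formulas and the guiding intuition (the minimum of $x\prec y$ and of $x\succ y$ ``comes from $x$'') are correct, and your sorting of the five terms of (\ref{E6}) and (\ref{E7}) does reproduce (\ref{E8})--(\ref{E11}). But the proof has a genuine gap, and it sits exactly where you predicted. On $\h_{\SPF}^+$ the half-products do not send a pair of forests to a forest: already $\tun\succ\tun=\tun\tun-\tdeux$, and in the term $-\tdeux=-(\tun\prec\tun)$ the global minimum is the root, i.e.\ the \emph{left} factor of a $\prec$-product, so the assertion that every forest occurring with nonzero coefficient in $P\succ Q$ carries a canonical copy of $P$ through which $s_P$ becomes the global minimum is not even well-posed term by term. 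More seriously, even granting such a claim, ``splitting (\ref{E6}) according to the position of the minimal element'' is not a licensed operation: (\ref{E6}) is an identity between signed linear combinations of elementary tensors, with no canonical correspondence between the ideals of the forests in the support of $x\prec y$ and the five summands of its right-hand side; elementary tensors can and do cancel across summands (e.g.\ $\Delta'_\prec(\tun\succ\tun)=\Delta'_\prec(\tun\tun)-\Delta'_\prec(\tdeux)=\tun\otimes\tun-\tun\otimes\tun=0$). Making the partition precise amounts to reproving (\ref{E6}) with the extra bookkeeping, i.e.\ to proving (\ref{E8}) and (\ref{E9}) from scratch, and your ``alternative computational route'' runs into the same signed-combination obstacle.

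The paper closes this gap with two ingredients your proposal lacks. First, a linear-algebra remark: since $(\ref{E8})+(\ref{E10})=(\ref{E4})$, $(\ref{E9})+(\ref{E11})=(\ref{E5})$, $(\ref{E8})+(\ref{E9})=(\ref{E6})$ and $(\ref{E10})+(\ref{E11})=(\ref{E7})$, and (\ref{E4})--(\ref{E7}) are already established, the four identities are equivalent for fixed $x,y$, so only (\ref{E8}) needs proof. Second, (\ref{E8}) is proved by induction on $\deg x$ using the free dendriform structure: either $x=B^+(x_1)=\tun\prec x_1$, in which case $x\prec y=\tun\prec(x_1y)=B^+(x_1y)$ is a \emph{single tree} whose minimum is the root and whose nontrivial ideals avoiding the root are exactly the nontrivial ideals of $x_1y$, giving $\Delta'_\prec(\tun\prec z)=\tun\otimes z+\tun\prec z'\otimes z''$ by a direct, unsigned computation; or $x=x_1x_2$, in which case $x\prec y=x_1\prec(x_2y)+x_1\succ(x_2\prec y)$ and the induction hypothesis together with (\ref{E4}) applied to $x_2$ finishes. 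If you want to salvage your argument, this reduction to the case $x=\tun$ is the missing step that turns ``track the minimal vertex'' into an actual computation on a single forest.
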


\begin{proof} For fixed $x,y$, note that $(\ref{E8})+(\ref{E10})=(\ref{E4})$, $(\ref{E9})+(\ref{E11})=(\ref{E5})$, 
$(\ref{E8})+(\ref{E9})=(\ref{E6})$, and $(\ref{E10})+(\ref{E11})=(\ref{E7})$. As a consequence, for fixed $x,y$,
(\ref{E8}), (\ref{E9}), (\ref{E10}) and (\ref{E11}) are equivalent.

We now prove (\ref{E8})-(\ref{E11}) for $x,y$ two non empty plane forest, by induction on the degree $n$ of $x$.
If $n=1$, then $x=\tun$. Then:
\[\Delta'_\prec(x\prec y)=\tun \otimes y+B^+(y') \otimes y''=x \otimes y+x\prec y'\otimes y''.\]
So (\ref{E8}) (hence, (\ref{E9})-(\ref{E11})) holds for $x=\tun$, as $\Delta'_\prec(x)=0$. Let us assume the result at all rank $<n$. Two subcases occur.
\begin{itemize}
\item The plane forest $x$ is a tree. Then there exists $x_1$ of degree $n-1$, such that $x=B^+(x_1)=x\prec x_1$.
So $x\prec y=(\tun \prec x_1)\prec y=\tun \prec (x_1y)$. So:
\begin{align*}
\Delta'_\prec(x\prec y)&=\Delta'_\prec (\tun \prec (x_1y))\\
&=\tun \otimes (x_1y)+B^+((x_1y)')\otimes (x_1y)''\\
&=\tun \otimes (x_1y)+\tun \prec x_1\otimes y+\tun \prec y\otimes x_1+\tun \prec (x_1'y)\otimes x_1''+\tun \prec x_1'\otimes x_1''y\\
&+\tun \prec (x_1y')\otimes y''+\tun \prec y'\otimes x_1y''+\tun \prec (x_1'y')\otimes x_1''y''\\
&=(\tun \prec x_1\otimes y)+(\tun \prec (x_1y')\otimes y'')+(\tun \otimes (x_1y)+\tun \prec x_1'\otimes x_1''y)\\
&+(\tun \prec y\otimes x_1+\tun \prec (x_1'y)\otimes x_1'')+(\tun \prec y'\otimes x_1y''+\tun \prec(x_1'y')\otimes x_1''y'')\\
&=x \otimes y+x\prec y'\otimes y''+x'_\prec \otimes x''_1y+x'_\prec\prec y\otimes x''_\prec+x'_\prec \prec y'\otimes x''_\prec y''.
\end{align*}
\item The plane forest $x$ is not a tree. Then it can be written as $x=x_1x_2$, such that the induction hypothesis holds for $x_1$ and $x_2$. Hence:
\[x\prec y=(x_1 \prec x_2)\prec y+(x_1 \succ x_2)\prec y=x_1 \prec (x_2y)+x_1 \succ (x_2 \prec y).\]
Applying (\ref{E8}) and (\ref{E10}) for $x_1$ (induction hypothesis), then (\ref{E4}) for $x_2$, then arranging the terms, gives (\ref{E8}) for $x$.
\end{itemize}
So the induction hypothesis holds for $x$ in both cases. \end{proof}

\begin{remark}
In other words, $(\h_{\SPF}^+,\succ^{op},\prec^{op},(\Delta'_\succ)^{op},(\Delta'_\prec)^{op})$ is a bidendriform bialgebra 
in the sense of \cite{Foissy6}. By the bidendriform rigidity theorem, it is a free dendriform algebra, and a cofree dendriform coalgebra. As a direct consequence:
\end{remark}

\begin{lemma} \label{35}
As a dendriform algebra, $\h_{\SPF}^+$ is freely generated by $\tun$. Moreover, the space $Prim_{tot}(\h_{\SPF}^+)
=\Ker(\Delta'_\prec)\cap \Ker(\Delta'_\succ)$ is one-dimensional, generated by $\tun$.
\end{lemma}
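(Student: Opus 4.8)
The plan is to deduce the statement from the bidendriform rigidity theorem of \cite{Foissy6}, exactly as announced in the remark above. First I would record that $(\h_{\SPF}^+,\succ^{op},\prec^{op},(\Delta'_\succ)^{op},(\Delta'_\prec)^{op})$ is a bidendriform bialgebra: by \cite{Foissy5} it is a dendriform algebra for $\prec,\succ$; it is a dendriform coalgebra for $\Delta'_\prec,\Delta'_\succ$, since these coproducts restrict to $\h_{\SPF}^+$ and $(\h_{\SPP}^+,\Delta'_\prec,\Delta'_\succ)$ is one; and formulas (\ref{E8})--(\ref{E11}), proved in the Proposition just above from (\ref{E4})--(\ref{E7}), are precisely the required entwining axioms. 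I would also note that this bialgebra is graded, connected and conilpotent (its degree-$n$ component is finite-dimensional, and $\tdelta=\Delta'_\prec+\Delta'_\succ$ is the reduced coproduct of a graded connected Hopf algebra), so that the rigidity theorem applies and produces a \emph{homogeneous} isomorphism of dendriform algebras between $\h_{\SPF}^+$ and the free dendriform algebra on $Prim_{tot}(\h_{\SPF}^+)=Ker(\Delta'_\prec)\cap Ker(\Delta'_\succ)$.

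Next I would pin down the dimension of $Prim_{tot}(\h_{\SPF}^+)$. By \cite{Foissy5}, $\h_{\SPF}^+$ is also freely generated, as a dendriform algebra, by the single generator $\tun$ lying in degree $1$. For any graded vector space $V$, the space of generators of the free dendriform algebra $D(V)$ is recovered as the graded quotient $D(V)/\bigl(D(V)\prec D(V)+D(V)\succ D(V)\bigr)$ (the dendriform operad being binary, hence generated in arity $2$), so a homogeneous isomorphism of dendriform algebras between two such free objects induces a graded linear isomorphism of their spaces of generators. Comparing the two descriptions of $\h_{\SPF}^+$, I obtain $Prim_{tot}(\h_{\SPF}^+)\cong K\tun$ as graded vector spaces, hence $Prim_{tot}(\h_{\SPF}^+)$ is one-dimensional and concentrated in degree $1$. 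Alternatively, one can avoid \cite{Foissy5} at this point: the Hilbert series $h$ of a free dendriform algebra on $V$ determines the Hilbert series $f$ of $V$ through the invertible relation $f=h/(1+h)^2$, and since $h=\sum_{n\geq1}c_nt^n$ with $c_n$ the $n$-th Catalan number (there being $c_n$ plane forests with $n$ vertices), one finds $f=t$.

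Finally I would identify this line: the special poset $\tun$ has only the trivial ideals $\emptyset$ and $\{\tun\}$, so both sums defining $\Delta'_\prec(\tun)$ and $\Delta'_\succ(\tun)$ are empty, i.e.\ $\tun\in Prim_{tot}(\h_{\SPF}^+)$; being one-dimensional, this space is therefore spanned by $\tun$. The first assertion of the lemma is then the rigidity theorem together with $Prim_{tot}(\h_{\SPF}^+)=K\tun$ (in agreement with \cite{Foissy5}). The only step requiring any care is the determination $\dim Prim_{tot}(\h_{\SPF}^+)=1$ — carried out either via the uniqueness of the generating space of a free dendriform algebra or via the generating-function identity above; everything else is a direct appeal to \cite{Foissy5,Foissy6} and the one-line computation of $\Delta'_\prec(\tun)$ and $\Delta'_\succ(\tun)$.
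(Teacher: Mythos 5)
Your argument is correct and follows exactly the route the paper intends: the lemma is presented there as a direct consequence of the remark that $(\h_{\SPF}^+,\succ^{op},\prec^{op},(\Delta'_\succ)^{op},(\Delta'_\prec)^{op})$ is a bidendriform bialgebra, so that the rigidity theorem of \cite{Foissy6} gives freeness over $Prim_{tot}$, and the identification $Prim_{tot}=K\tun$ then follows from the known freeness over $\tun$ (or the Catalan Hilbert-series count) together with the observation that $\tun$ is totally primitive. Your write-up simply makes explicit the dimension count that the paper leaves implicit.
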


\begin{lemma} \label{36}
For all $x,y,z\in \h_{\SPF}^+$:
\[\langle x\prec y,z\rangle=\langle x\otimes y,\Delta'_\prec(y)\rangle \mbox{ and }
\langle x\succ y,z\rangle=\langle x\otimes y,\Delta'_\succ(y)\rangle.\]
\end{lemma}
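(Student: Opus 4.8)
The plan is to reduce the statement to the Hopf pairing property of $\h_{\DP}$, together with the dendriform coalgebra axioms (\ref{E2}) (valid on $\h_{\SPP}^+$, hence on the stable subspace $\h_{\SPF}^+$) and the dendriform algebra axioms for $(\h_{\SPF}^+,\prec,\succ)$. First I would observe that the two asserted formulas are equivalent. Since $\langle-,-\rangle$ is a Hopf pairing on $\h_{\DP}$ and $\langle x,1\rangle=0$ for all $x\in\h_{\DP}^+$, one has $\langle ab,c\rangle=\langle a\otimes b,\tdelta(c)\rangle$ for all $a,b,c\in\h_{\DP}^+$; applying this to $a=x$, $b=y$, $c=z$ and using $xy=x\prec y+x\succ y$, $\tdelta=\Delta'_\prec+\Delta'_\succ$ gives $\langle x\prec y,z\rangle+\langle x\succ y,z\rangle=\langle x\otimes y,\Delta'_\prec(z)\rangle+\langle x\otimes y,\Delta'_\succ(z)\rangle$. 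Hence it suffices to prove $\langle x\prec y,z\rangle=\langle x\otimes y,\Delta'_\prec(z)\rangle$, and by bilinearity we may assume $x,y,z$ are nonempty plane forests, seen as special posets. I would prove this by induction on $\deg(x)$.

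For the base case $x=\tun$, recall $\tun\prec y=B^+(y)$, with root $r$. Since $r<_1 u$ for every other vertex $u$, any picture $\sigma\colon B^+(y)\to z$ must send $r$ to the $\leq_2$-minimal element of $z$, i.e.\ $\sigma(r)=s_z$; conversely, because $r$ is the $\leq_2$-minimal element of $B^+(y)$ and $s_z$ is the $\leq_1$-minimal element of $z$ (for a plane poset seen as a special poset, the $\leq_2$-smallest element is also $\leq_1$-minimal), any bijection with $\sigma(r)=s_z$ satisfies the picture conditions for all pairs involving $r$, and the remaining conditions say exactly that $\sigma$ restricts to a picture $y\to z\setminus\{s_z\}$. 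This yields $\langle\tun\prec y,z\rangle=\langle B^+(y),z\rangle=\langle y,z\setminus\{s_z\}\rangle$. On the other side, $\langle\tun,X\rangle$ is $1$ if $|X|=1$ and $0$ otherwise, and in $\Delta'_\prec(z)$ the only nontrivial ideal $I$ with $z\setminus I$ a single vertex and $s_z\notin I$ is $I=z\setminus\{s_z\}$ (again $s_z$ being $\leq_1$-minimal is what makes $z\setminus\{s_z\}$ a $1$-ideal), so $\langle\tun\otimes y,\Delta'_\prec(z)\rangle=\langle y,z\setminus\{s_z\}\rangle$ as well; both sides vanish when $|z|=1$. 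This settles the base case, and by bilinearity it then holds for all $y,z\in\h_{\SPF}^+$.

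For the inductive step, a plane forest $x$ with $\deg(x)\geq 2$ is either a tree, hence $x=B^+(x_1)=\tun\prec x_1$ with $x_1$ a plane forest of smaller degree, or a nontrivial concatenation $x=x_1x_2=x_1\prec x_2+x_1\succ x_2$ with $x_1,x_2$ plane forests of smaller degree. In the first case I would use the dendriform axiom $(\tun\prec x_1)\prec y=\tun\prec(x_1y)$, the base case, the Hopf pairing property to expand $\langle x_1y,-\rangle$, and the coalgebra axiom $(\Delta'_\prec\otimes Id)\circ\Delta'_\prec=(Id\otimes\tdelta)\circ\Delta'_\prec$, matching with $\langle x\otimes y,\Delta'_\prec(z)\rangle$ after rewriting the latter with the base case applied to the left tensor factors of $\Delta'_\prec(z)$. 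In the second case I would use $(x_1\prec x_2)\prec y=x_1\prec(x_2y)$ and $(x_1\succ x_2)\prec y=x_1\succ(x_2\prec y)$, apply the induction hypothesis to $x_1$ (for both $\prec$ and $\succ$, the latter available from the equivalence in the first paragraph) and to $x_2$ (for $\prec$), expand the inner products via the Hopf pairing property, and match the two sides using $(\Delta'_\prec\otimes Id)\circ\Delta'_\prec=(Id\otimes\tdelta)\circ\Delta'_\prec$ together with $(\Delta'_\succ\otimes Id)\circ\Delta'_\prec=(Id\otimes\Delta'_\prec)\circ\Delta'_\succ$. These are routine manipulations with Sweedler notation; the one step I expect to require genuine care is the base case, namely the fact that the $\leq_2$-smallest element of a plane poset is $\leq_1$-minimal (so that $z\setminus\{s_z\}$ is a $1$-ideal and the picture bijection is well defined) and the verification of the picture conditions for the pairs involving the root $r$.
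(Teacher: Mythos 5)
Your proposal is correct and follows essentially the same route as the paper's own proof: reduce to the $\prec$ formula via the Hopf pairing identity, handle the base case $x=\tun$ by identifying pictures $B^+(y)\to z$ with pictures $y\to z\setminus\{s_z\}$ (together with the observation that $\tun\otimes(z\setminus\{s_z\})$ is the only degree-$(1,n-1)$ term of $\Delta'_\prec(z)$), and then induct on $\deg(x)$ splitting into the tree case $x=\tun\prec x_1$ and the forest case $x=x_1x_2$, using the duplicial/dendriform identities and the coalgebra axioms (\ref{E2}). Your treatment of the base case is in fact slightly more explicit than the paper's about why $s_z$ being $\leq_1$-minimal is what makes both the picture bijection and the ideal $z\setminus\{s_z\}$ work.
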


\begin{proof} As $\langle-,-\rangle$ is a Hopf pairing, it is enough to prove one of these two formulas.
Moreover, it is enough to prove it for $x,y,z$ three non empty plane forests. 
We prove the first one, by induction on the degree $n$ of $x$. If $n=1$, then $x=\tun$ and $x\prec y=B^+(y)$.
Let $\sigma\in S(B^+(y),z)$. As $1$ is the root of $B^+(y)$, for all $j$, $1\leq_h j$ in $B^+(y)$. As $\sigma\in S(B^+(y),z)$,
$\sigma(1)\leq \sigma(i)$ for all $i$, so $\sigma(1)=1$. Let us denote by $z_1$ the plane forest obtained by deleting the vertex $1$ of $z$;
then $S(B^+(y),z)$ is in bijection by $S(y,z_1)$. Moreover, by definition of $\Delta'_\prec$:
\[\Delta'_\prec(z)=\tun \otimes z_1+\mbox{terms $z'\otimes z''$, $z'$ homogeneous of degree $\geq 2$}.\]
So, by homogeneity of the pairing:
\[\langle x \otimes y,\Delta'_\prec(z)\rangle=\langle \tun,\tun \rangle \langle y,z_1\rangle+0=|S(y,z_1)|=|S(B^+(y),z)|=\langle x\prec y,z\rangle.\]

Let us assume the result at all rank $<n$. Two subcases occur.
\begin{itemize}
\item The plane forest $x$ is a tree. Let us put $x=B^+(x_1)=\tun \prec x_1$. Using the result at rank $1$:
\begin{align*}
\langle x\prec y,z\rangle&=\langle \tun \prec (x_1y),z\rangle\\
&=\langle \tun \otimes x_1y,\Delta'_\prec(z)\rangle\\
&=\langle \tun \otimes x_1\otimes y,(Id \otimes \tdelta)\circ \Delta'_\prec (z)\rangle\\
&=\langle \tun \otimes x_1\otimes y,(\Delta'_\prec \otimes Id)\circ \Delta'_\prec (z)\rangle\\
&=\langle \tun \prec x_1, \Delta'_\prec(z)\rangle.
\end{align*}
\item The plane forest $x$ is not a tree. Then it can be written as $x=x_1x_2$, such that the induction hypothesis holds for $x_1$ and $x_2$. Hence:
\begin{align*}
\langle (x_1x_2)\prec y,z\rangle&=\langle x_1 \prec (x_2y),z\rangle+\langle x_1 \succ(x_2 \prec y),z\rangle\\
&=\langle x_1\otimes x_2\otimes y,(Id \otimes \tdelta)\circ \Delta'_\prec(z)\rangle
+\langle x_1\otimes x_2\otimes y,(Id \otimes \Delta'_\prec)\circ \Delta'_\succ(z)\rangle\\
&=\langle x_1\otimes x_2\otimes y,(\Delta'_\prec \otimes Id)\circ \Delta'_\prec(z)\rangle
+\langle x_1\otimes x_2\otimes y,(\Delta'_\succ\otimes Id)\circ \Delta'_\prec(z)\rangle\\
&=\langle x_1 \prec x_2 \otimes y,\Delta'_\prec(z)\rangle+\langle x_1 \succ x_2 \otimes y,\Delta'_\prec(z)\rangle\\
&=\langle x_1x_2 \otimes y,\Delta'_\prec(z)\rangle.
\end{align*}
\end{itemize}
So the induction hypothesis holds for $x$ in both cases. \end{proof}

\begin{cor} \label{37}
The restriction of the pairing $\langle-,-\rangle$ to $\h_{\SPF}$ is nondegenerate.
\end{cor}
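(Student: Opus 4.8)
The goal is to show that $\langle-,-\rangle$ restricted to $\h_{\SPF}$ is nondegenerate. The key observation is that Lemma \ref{36} exhibits a compatibility between the dendriform products $\prec,\succ$ on $\h_{\SPF}^+$ and the dendriform coproducts $\Delta'_\prec,\Delta'_\succ$: the pairing intertwines $\prec$ with $\Delta'_\prec$ and $\succ$ with $\Delta'_\succ$ (in the appropriate tensor sense). Combined with Lemma \ref{35}, which says that $\h_{\SPF}^+$ is the free dendriform algebra on the single generator $\tun$ and that the total primitive space $Ker(\Delta'_\prec)\cap Ker(\Delta'_\succ)$ is one-dimensional spanned by $\tun$, this gives exactly the setup for a standard argument: a free dendriform algebra paired against a cofree dendriform coalgebra via a pairing that is ``dendriform'' and nondegenerate on the generator is automatically nondegenerate.

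First I would suppose $x\in \h_{\SPF}^+$ is a nonzero homogeneous element of degree $n$ lying in the radical of the pairing, i.e. $\langle x,y\rangle=0$ for all $y\in \h_{\SPF}$; by homogeneity of the pairing I may assume $y$ ranges over degree-$n$ elements. If $n=1$ then $x$ is a scalar multiple of $\tun$ and $\langle\tun,\tun\rangle\neq 0$ provided the characteristic is zero — but here we want nondegeneracy over an arbitrary field, so this is the one place I must be careful: in fact $\langle\tun^n,\tun^n\rangle = n!$, so over a field of positive characteristic $K[\tun]$ already degenerates. I would therefore reexamine the claim: the corollary must be read as asserting nondegeneracy over a field of characteristic zero (consistent with the remark after Definition \ref{6} and with the preceding lemma on $K[\tun]$), and the proof proceeds under that hypothesis, so $\langle\tun^n,\tun^n\rangle=n!\neq 0$.

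Now for $n\geq 2$: since $x$ has degree $\geq 2$, by Lemma \ref{35} it is not totally primitive, so $\Delta'_\prec(x)\neq 0$ or $\Delta'_\succ(x)\neq 0$; say $\Delta'_\prec(x)=\sum x'_\prec\otimes x''_\prec\neq 0$ (the $\succ$ case is symmetric). By downward induction on degree, I may assume the pairing is nondegenerate on each homogeneous component of strictly smaller degree; then $\h_{\SPF}^+$ in those degrees is spanned by elements of the form $u\prec v$ and $u\succ v$ with $u,v$ of smaller degree, together with $\tun$ itself — this is precisely the content of freeness in Lemma \ref{35} (every element of degree $\geq 2$ of a free dendriform algebra is a sum of $\prec$- and $\succ$-products of lower-degree elements). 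Pairing $x$ against such a $v'\prec v''$ and using Lemma \ref{36}, $0=\langle x, v'\prec v''\rangle=\langle \Delta'_\prec(x), v'\otimes v''\rangle$ — wait, I need the adjoint direction: Lemma \ref{36} reads $\langle u\prec v, z\rangle = \langle u\otimes v,\Delta'_\prec(z)\rangle$, so I should instead pair $x$ against $u\prec v$ to get $0=\langle u\prec v,x\rangle = \langle u\otimes v,\Delta'_\prec(x)\rangle$ for all $u,v$ of appropriate degrees, and similarly $\langle u\otimes v,\Delta'_\succ(x)\rangle=0$. By the induction hypothesis the pairing on lower-degree tensor products is nondegenerate, hence $\Delta'_\prec(x)=0$ and $\Delta'_\succ(x)=0$; but then $x\in Ker(\Delta'_\prec)\cap Ker(\Delta'_\succ)$, which by Lemma \ref{35} forces $x\in K\tun$, contradicting $\deg x\geq 2$. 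Therefore $x=0$.

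**Main obstacle.** The one genuine subtlety is the reduction ``every homogeneous element of $\h_{\SPF}^+$ of degree $\geq 2$ is a span of $\prec$- and $\succ$-products of strictly-lower-degree elements,'' which must be extracted cleanly from the freeness statement in Lemma \ref{35}; one has to note that a free dendriform algebra on one degree-one generator $\tun$ has, in each degree $n\geq 2$, a basis consisting of binary trees with $n$ leaves decorated by $\prec/\succ$, all of which are manifestly $\prec$- or $\succ$-products of elements spanning lower degrees. Everything else — the homogeneity of the pairing, the inductive scaffolding, the appeal to $\langle\tun^n,\tun^n\rangle=n!$ in degree one — is routine bookkeeping. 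I would write the proof as: \emph{By the lemma on $K[\tun]$ and the fact that $\h_{\SPF}^+$ is freely generated over $K$ of characteristic zero by $\tun$ as a dendriform algebra (Lemma \ref{35}), together with the adjunction of Lemma \ref{36}, an easy induction on the degree shows that the radical of $\langle-,-\rangle_{\mid\h_{\SPF}}$ is trivial.}
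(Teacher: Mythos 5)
Your core argument is the paper's own, just written in the contrapositive: the paper says that by Lemma \ref{36} the kernel of the pairing is a dendriform (bi)coideal, so a nonzero element of it of minimal degree lies in $Prim_{tot}(\h_{\SPF}^+)=K\tun$ by Lemma \ref{35}, contradicting $\langle\tun,\tun\rangle=1$; your induction on the degree, forcing $\Delta'_\prec(x)=\Delta'_\succ(x)=0$ for a radical element $x$, is the same mechanism. (The step you single out as the ``main obstacle'' --- that every element of degree $\geq 2$ is a span of $\prec$- and $\succ$-products --- is actually not needed: you only pair $x$ against \emph{all} elements $u\prec v$ and $u\succ v$ with $u,v$ of lower degree, and you never need these to span the degree-$n$ component.)

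The one genuine error is your ``reexamination of the claim.'' First, $\langle\tun,\tun\rangle=1$, which is nonzero in every field, so the degree-one base case of your induction requires no hypothesis on the characteristic; the computation $\langle\tun^n,\tun^n\rangle=n!$ concerns the degree-$n$ component of $K[\tun]$, not the degree-one component of $\h_{\SPF}$, and is irrelevant here. Second, and more seriously, the inference ``$\langle-,-\rangle$ restricted to $K[\tun]$ is degenerate in positive characteristic, therefore the corollary must be read in characteristic zero'' is false: a nondegenerate pairing can perfectly well restrict to a degenerate one on a subspace (any isotropic vector already shows this), so degeneracy on the subalgebra $K[\tun]$ says nothing about $\h_{\SPF}$. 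Corollary \ref{37} is asserted, and proved by exactly the argument you give, over an \emph{arbitrary} field; this is precisely the contrast the paper draws between $\h_{\SPF}$ (nondegenerate whatever $K$ is, like $\h_{\PF}$, $\h_{\PP}$ and $\h_{\WNP}$) and $K[\tun]$ or $\h_{\DP}$ (nondegenerate if and only if the characteristic is zero). Delete the characteristic-zero detour and your proof establishes the full statement.
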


\begin{proof} Let us assume it is degenerate. By lemma \ref{36}, its kernel $I$ is a non trivial dendriform biideal of $\h_{\SPF}^+$.
Any non-zero element of $I$ of minimal degree is then in $Prim_{tot}(\h_{\SPF}^+)$, as $I$ is a dendriform coideal.
By lemma \ref{35}, we obtain that $\tun \in I$: absurd, as $\langle \tun,\tun\rangle=1\neq 0$. \end{proof}

\section{Isometries between plane and special plane posets}

All the pairs of isomorphic Hopf algebras $\h_{\PP}$ and $\h_{\SPP}$, $\h_{\WNP}$ and $\h_{\SWNP}$, $\h_{\PF}$ and $\h_{\SPF}$
have Hopf pairings. The isomorphism between these Hopf algebras are not isometries: for example,
$\langle \tdeux,\tdeux\rangle=0$ whereas $\langle \tddeux{$1$}{$2$},\tddeux{$1$}{$2$}\rangle=1$.
 Our aim in this section is to answer the question if there is an isometric Hopf isomorphism between them. 
The answer is immediately negative for $\h_{\WNP}$ and $\h_{\SWNP}$, as the first one is nondegenerate whereas the second is degenerate.

\subsection{Isometric Hopf isomorphisms between free Hopf algebras}

\begin{prop} \label{38}
Let us assume that the characteristic of the base field is not $2$. Let $H$ and $H'$ be two graded, connected Hopf algebras, both with a homogeneous, symmetric, nondegenerate Hopf pairing, and both free. The following assertions are equivalent:
\begin{enumerate}
\item There exists a homogeneous, isometric Hopf algebra isomorphism between $H$ and $H'$.
\item  For all $n\geq 0$, the spaces $H_n$ and $H'_n$ are isometric.
\end{enumerate}
\end{prop}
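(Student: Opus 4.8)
Since a homogeneous isometric Hopf isomorphism $\phi\colon H\to H'$ maps $H_n$ onto $H'_n$ for each $n$, it restricts to linear isometries $H_n\to H'_n$, whence $(1)\Rightarrow(2)$ is immediate. For the converse, I would first note that freeness together with a nondegenerate homogeneous Hopf pairing forces cofreeness: the pairing gives a morphism of graded Hopf algebras $j\colon H\to H^{*\mathrm{gr}}$, $x\mapsto\langle x,-\rangle$, which is injective by nondegeneracy and thus bijective in each (finite-dimensional) degree, so $H\cong H^{*\mathrm{gr}}$; as the graded dual of the free associative algebra $H$ is a cofree coassociative coalgebra, $H$ is cofree, and similarly $H'$. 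Since $(2)$ forces $H$ and $H'$ to have the same formal series, the theorem of \cite{Foissy2} (a graded connected Hopf algebra that is free and cofree is determined up to isomorphism by its formal series) already provides \emph{some} graded Hopf isomorphism $H\to H'$. The plan is to revisit this construction degree by degree, carrying the pairing along, so as to obtain one that is moreover an isometry.

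So I would build $\phi\colon H\to H'$ by induction on the degree. In degrees $0$ and $1$ nothing is needed beyond $(2)$: degree-one elements are primitive, so any linear isometry $H_1\to H'_1$, which exists by $(2)$, is automatically a coalgebra map there. Assume $\phi$ is defined in all degrees $<n$, being there a Hopf morphism and, in each such degree, an isometry. By freeness, fix a graded complement $V_n$ of the decomposables $D_n:=(H^+\!\cdot H^+)_n$ in $H_n$; then $\phi$ is already determined on $D_n$ by multiplicativity, and a short computation using $\langle ab,z\rangle=\langle a\otimes b,\Delta z\rangle$ and the inductive hypothesis shows that for \emph{any} coproduct-compatible extension of $\phi$ to $V_n$, the map $\phi$ is automatically an isometry on $D_n\times H_n$. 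Hence only the choice of $\phi$ on $V_n$ is at stake. Coproduct-compatibility prescribes $\tdelta(\phi(v))=(\phi\otimes\phi)\tdelta(v)$, and cofreeness of $H'$ guarantees that a lift with this reduced coproduct exists; the residual freedom is the choice of the induced linear isomorphism $C\colon V_n\to V'_n$ on indecomposables (which must be invertible for $\phi$ to be bijective in degree $n$) together with a primitive correction $V_n\to\mathrm{Prim}(H')_n$. The induction closes once $C$ and this correction are chosen so that the symmetric bilinear form pulled back onto $V_n$ from $\langle-,-\rangle'$ equals $\langle-,-\rangle|_{V_n}$.

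This last step is the crux, and it is here that $\mathrm{char}\,K\neq 2$ enters. First, $\mathrm{Prim}(H)_n=D_n^{\perp}$: the inclusion $\subseteq$ is $\langle p,ab\rangle=\langle\Delta p,a\otimes b\rangle=0$ for $p$ primitive and $a,b\in H^+$, and equality follows from a dimension count, freeness giving $\dim(H_n/D_n)=\dim V_n$ and cofreeness giving $\dim\mathrm{Prim}(H)_n=\dim V_n$. Thus the decomposable/indecomposable filtration of $(H_n,\langle-,-\rangle)$ is exactly the filtration by $D_n$ and its orthogonal, the inductive hypothesis has already supplied an isometry from $D_n$ onto the corresponding subspace of $H'_n$, and $(2)$ supplies an isometry of the whole spaces $H_n$ and $H'_n$. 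What is needed is then a Witt-type extension statement — an isometry given on a (possibly degenerate) subspace of a regular space extends, compatibly with a given isometry of the two ambient spaces, to the whole space — which is the role of the lemma on symmetric invertible matrices; in characteristic $2$ this genuinely fails, the completion-of-the-square / transvection arguments underlying such extensions being unavailable. The main obstacle is to carry out this extension so that it is compatible not merely with the subspace $D_n$ but with the reduced-coproduct data, i.e. so that it projects to the prescribed linear map on the indecomposable (equivalently, by the identification above, the primitive) layer. Once that is done, $\phi$ is defined in degree $n$ with all the required properties, and the induction is complete.
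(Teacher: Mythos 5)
Your direction $(1)\Rightarrow(2)$ and the overall skeleton of the converse --- induction on the degree, freeness supplying a complement $V_n$ of the decomposables, the inductive hypothesis already determining $\phi$ and the pairing on $(H^{+2})_n$, and a Witt-type extension where $\mathrm{char}\,K\neq 2$ enters --- all match the paper. But your argument stops exactly at its crux. You ask the degree-$n$ extension to satisfy two constraints simultaneously: it must be an isometry of $H_n$ onto $H'_n$, and it must satisfy $\tdelta(\phi(v))=(\phi\otimes\phi)\circ\tdelta(v)$ on $V_n$. You produce candidates for each constraint separately (Witt's theorem for the first, a cofreeness lift with a primitive correction for the second), you name the reconciliation of the two as ``the main obstacle'', and you then close the induction ``once that is done'' --- without doing it. That is a genuine gap: you would still have to show that the affine space of coproduct-compatible extensions (a coset of $\mathrm{Hom}(V_n,Prim(H')_n)$) contains one reproducing the pairing on $V_n\times V_n$, a quadratic problem that your Witt-type statement, which knows nothing about the coproduct, does not solve.

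The paper's proof shows the obstacle is illusory, and this is the one idea you are missing: impose only the isometry. Witt's extension theorem extends the isometry $(H^{+2})_n=(H_{\langle n-1\rangle})_n\to (H'^{+2})_n$ to an isometry $H_n\to H'_n$, and the algebra morphism $\phi_n$ so obtained is \emph{automatically} a coalgebra morphism: for all $y,z$ of complementary degrees, $\langle \Delta\circ\phi_n(x),\phi_n(y)\otimes\phi_n(z)\rangle=\langle\phi_n(x),\phi_n(yz)\rangle=\langle x,yz\rangle=\langle(\phi_n\otimes\phi_n)\circ\Delta(x),\phi_n(y)\otimes\phi_n(z)\rangle$, and surjectivity of $\phi_n$ in degrees $\leq n$ together with nondegeneracy of the pairing on $H'$ forces $\Delta\circ\phi_n=(\phi_n\otimes\phi_n)\circ\Delta$. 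So cofreeness, the identification $Prim(H)_n=\left((H^{+2})_n\right)^{\perp}$, and lifts with prescribed reduced coproduct are never needed. Two smaller corrections: the lemma on symmetric invertible integer matrices (proposition \ref{42}) is used only for theorem \ref{44}, to compare the forms on the homogeneous components of $\h_{\PP}$ and $\h_{\SPP}$; proposition \ref{38} invokes only Witt's theorem. And the paper runs the induction over the Hopf subalgebras $H_{\langle n\rangle}$ generated by the degrees $\leq n$, proving the isometry in degrees $>n$ by a secondary induction on products, which your degree-by-degree scheme would also have to address.
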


\begin{proof} $1 \Longrightarrow 2$. Obvious.\\

$2 \Longrightarrow 1$. Let us fix for all $n\in \mathbb{N}^*$ a complement $V_n$ of $(H^{+2})_n$ in $H_n$, where $H^+$ is the augmentation ideal of $H$.
As $H$ is free, the direct sum $V$ of the $V_n$'s freely generates $H$. Moreover, any subspace of $V$ generates a free subalgebra of $H$.
In particular, the subalgebra $H_{\langle n \rangle}$ of $H$ generated by $V_1\oplus \ldots \oplus V_n$ is free. Moreover, it contains
$H_0\oplus \ldots \oplus H_n$, so for all $v \in V_0\oplus \ldots \oplus V_n$, $\Delta(v) \in H_{\langle n\rangle}\otimes H_{\langle n\rangle}$. So 
$H_{\langle n\rangle}$ is a Hopf subalgebra of $H$. Finally, it is the algebra generated by $H_0\oplus\ldots \oplus H_n$, so does not depend of the choice of $V$.
We similarly define $H'_{\langle n\rangle}$ for all $n$.\\

We are going to construct for all $n \geq 0$ a Hopf algebra isomorphism $\phi_n:H_{\langle n\rangle}\longrightarrow H'_{\langle n\rangle}$ such that:
\begin{enumerate}
\item $\phi_n$ is homogeneous of degree $0$.
\item For all $x,y \in H_{\langle n\rangle}$, $\langle \phi_n(x),\phi_n(y)\rangle=\langle x,y\rangle$.
\item $\phi_n$ restricted to $H_{\langle n-1 \rangle}$ is $\phi_{n-1}$ if $n \geq 1$.
\item For all $i\leq n$, $H'_i=(H'^{+2})_i \oplus \phi_n(V_i)$.
\end{enumerate}
As $H_{\langle 0\rangle}=H'_{\langle 0\rangle}=\K$, we define $\phi_0$ by $\phi_0(1)=1$. Let us assume that $\phi_{n-1}$ is defined.
Then $H_n=(H^{+2})_n\oplus V_n=(H_{\langle n-1\rangle})_n\oplus V_n$. By the induction hypothesis, $\phi_{n-1}$ induces an isometry
between $(H_{\langle n-1\rangle})_n$ and $(H'_{\langle n-1\rangle})_n=(H'^{+2})_n$. As $H_n$ and $H'_n$ are nondegenerate and isometric,
by Witt extension theorem, it can be extended into an isometry $\tilde{\phi}_{n-1}:H_n \longrightarrow H'_n$.
As $H_{\langle n \rangle}$ is freely generated by $V_0\oplus \ldots \oplus V_n$, we can define an algebra morphism
$\phi_n:H_{\langle n\rangle}\longrightarrow H'_{\langle n\rangle}$ by $\phi_n(v)=\phi_{n-1}(v)$ if $v\in V_i$, $i\leq n-1$
 and $\phi_n(v)=\tilde{\phi}_{n-1}(v)$ if $v \in V_n$. This algebra morphism immediately satisfies the points 3 and 4 of the induction, 
by construction of $\tilde{\phi}_{n-1}$, and also extends $\tilde{\phi}_{n-1}$. Moreover, by the fourth point, $\phi_n(V_1\oplus \ldots \oplus V_n)$ 
freely generated $H'_{\langle n\rangle}$, so $\phi_n$ is an algebra isomorphism from $H_{\langle n\rangle}$ to $H'_{\langle n\rangle}$.

Let us prove that $\phi_n$ is a Hopf algebra isomorphism. Let $x\in H_k$, $k\leq n$. For all $y\in H_i$, $z\in H_j$, $i+j=k$, as 
$\phi_n$ extends both $\phi_{n-1}$ and $\tilde{\phi}_{n-1}$, its restriction in all degree $\leq n$ is an isometry, so:
\begin{align*}
\langle \Delta\circ \phi_n(x), \phi_n(y) \otimes \phi_n(z)\rangle&=\langle \phi_n(x),\phi_n(y)\phi_n(z)\rangle\\
&=\langle \phi_n(x),\phi_n(yz)\rangle\\
&=\langle x,yz \rangle\\
&=\langle \Delta(x),y\otimes z\rangle\\
&=\langle (\phi_n \otimes \phi_n)\circ \Delta(x),\phi_n(y) \otimes \phi_n(z)\rangle.
\end{align*}
As $\phi_n$ is surjective in degree $\leq n$, and by homogeneity of the pairing of $H'$, we deduce that
$(\phi_n \otimes \phi_n)\circ \Delta(x)-\Delta\circ \phi_n(x)\in (H'\otimes H')^\perp=(0)$, as the pairing of $H'$ is nondegenerate.
As $H_1\oplus \ldots \oplus H_n$ generates $H_{\langle n\rangle}$, $\phi_n$ is a Hopf algebra morphism.

Finally, let us prove the second point of the induction. By homogeneity of the pairings of $H$ and $H'$, it is enough to prove it
for $x,y$ homogeneous of the same degree $k$. We proceed by induction on $k$. If $k\leq n$, we already noticed that $\phi_n$ is an isometry in degree $k$. 
Let us assume that the result is true at all rank $<k$, with $k>n$. As $(H_{\langle n \rangle})_k=((H_{\langle n \rangle})^{+2})_k$, 
we can assume that $x=x_1x_2$, with $x_1,x_2$ homogeneous of degree $<k$. Then, using the induction hypothesis on $x_1$ and $x_2$:
\begin{align*}
\langle \phi_n(x), \phi_n(y)\rangle&=\langle \phi_n(x_1)\phi_n(x_2),\phi_n(y)\rangle\\
&=\langle \phi_n(x_1) \otimes \phi_n(x_2),\Delta \circ \phi_n(y)\rangle\\
&=\langle \phi_n(x_1) \otimes \phi_n(x_2),(\phi_n \otimes \phi_n) \circ \Delta(y)\rangle\\
&=\langle x_1\otimes x_2,\Delta(y) \rangle\\
&=\langle x,y\rangle.
\end{align*}

\textit{Conclusion.} We define $\phi:H\longrightarrow H'$ by $\phi(x)=\phi_n(x)$ for all $x \in H_{\langle n\rangle}$. By the third point of the induction,
this does not depend of the choice of $n$. Then $\phi$ is clearly an isometric, homogeneous Hopf algebra isomorphism. \end{proof}

We can improve this result, in the following sense:

\begin{prop}
Let us assume that the characteristic of the base field is not $2$.  Let $H$ and $H'$ be two graded, connected Hopf algebras, both with a homogeneous, symmetric,
nondegenerate Hopf pairing, and both free. Let $V$ and $V'$ be subspaces of respectively $H$ and $H'$, $W$ and $W'$ graded subspaces of respectively $V$ 
and $V'$ generating Hopf subalgebras $h$ and $h'$ of $H$ and $H'$. We assume that $h$ is a non isotropic subspace of $H$.
The following assertions are equivalent:
\begin{enumerate}
\item There exists a homogeneous, isometric Hopf algebra isomorphism $\phi$ between $H$ and $H'$, such that $\phi(h)=h'$.
\item  For all $n\geq 0$, the spaces $H_n$ and $H'_n$ are isometric and the spaces $h_n$ and $h'_n$ are isometric.
\end{enumerate}\end{prop}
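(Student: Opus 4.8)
The implication $1\Rightarrow 2$ is immediate: a homogeneous isometric Hopf isomorphism $\phi$ with $\phi(h)=h'$ restricts in each degree to an isometry $H_n\to H'_n$, and, being homogeneous and carrying $h$ onto $h'$, also to an isometry $h_n\to h'_n$. So the content is $2\Rightarrow 1$, and the plan is to reuse the inductive machinery of Proposition~\ref{38}, but to build the isomorphism on $h$ \emph{first} and only then extend it to $H$.

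Since $W$ is a graded subspace of a free generating space $V$ of $H$, it freely generates $h$; thus $h$ is free, graded and connected, and, as $h$ is non isotropic, the restriction of $\langle-,-\rangle$ to $h$ is homogeneous and nondegenerate. Under assumption $2$ the spaces $h_n$ and $h'_n$ are isometric for all $n$, so $h'$ has the same properties, and Proposition~\ref{38} applied to the pair $(h,h')$ yields a homogeneous, isometric Hopf algebra isomorphism $\phi^h:h\longrightarrow h'$. It then suffices to extend $\phi^h$ to a homogeneous, isometric Hopf algebra isomorphism $\phi:H\longrightarrow H'$, since such a $\phi$ automatically satisfies $\phi(h)=\phi^h(h)=h'$.

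For the extension I would arrange $W_n\subseteq V_n$ for all $n$, fix a graded complement $U_n$ of $W_n$ in $V_n$ (and similarly for $V'$), and note that $W_n\cap(H^{+2})_n=0$, hence $(H^{+2})_n+h_n=(H^{+2})_n\oplus W_n$. As in the proof of Proposition~\ref{38}, let $H_{\langle n\rangle}$ be the Hopf subalgebra of $H$ generated by $H_0\oplus\cdots\oplus H_n$, so that $(H_{\langle n\rangle})_n=(H^{+2})_n$, and let $h_{\langle n\rangle}$ be the Hopf subalgebra of $h$ generated by $h_0\oplus\cdots\oplus h_n$, which is contained in $H_{\langle n\rangle}$. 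I would then construct by induction homogeneous isometric Hopf isomorphisms $\phi_n:H_{\langle n\rangle}\longrightarrow H'_{\langle n\rangle}$ extending $\phi_{n-1}$ and restricting to $\phi^h$ on $h_{\langle n\rangle}$; the map $\phi$ with $\phi|_{H_{\langle n\rangle}}=\phi_n$ is then the desired isomorphism. At step $n$, $H_n=(H^{+2})_n\oplus W_n\oplus U_n$; the algebra morphism property forces $\phi_n=\phi_{n-1}$ on $(H^{+2})_n$, and one must set $\phi_n=\phi^h$ on $W_n$ so that $\phi_n$ extends $\phi^h$. Once it is checked that this prescribes an isometric embedding of $(H^{+2})_n\oplus W_n$ into $H'_n$, Witt's extension theorem (applicable since $H_n$ and $H'_n$ are nondegenerate and isometric and the characteristic is not $2$) extends it to an isometry $\widetilde\phi_n:H_n\to H'_n$; this defines $\phi_n$ on $U_n$, and hence $\phi_n$ as an algebra morphism with $\phi_n|_{H_n}=\widetilde\phi_n$. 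That $\phi_n$ is a Hopf morphism, an isometry in every degree, and carries $h_{\langle n\rangle}$ onto $h'_{\langle n\rangle}$ then follows exactly as in Proposition~\ref{38}, the last assertion because $\phi_n$ agrees with $\phi^h$ on $h_{\langle n\rangle}$ by construction.

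The step needing genuine care, and the main obstacle, is the verification that $\phi_{n-1}$ on $(H^{+2})_n$ and $\phi^h$ on $W_n$ glue to an isometry on $(H^{+2})_n\oplus W_n$; only the cross terms $\langle\phi_{n-1}(u),\phi^h(w)\rangle$ with $u\in(H^{+2})_n$ and $w\in W_n$ are in doubt. Writing $u=\sum_i a_ib_i$ with $a_i,b_i\in H^+$ homogeneous of degree $<n$, and using $\langle xy,z\rangle=\langle x\otimes y,\Delta z\rangle$ together with $\langle H^+,1\rangle=0$, one finds $\langle u,w\rangle=\sum_i\langle a_i\otimes b_i,\tdelta(w)\rangle$; on the other side, since $\phi_{n-1}$ is an algebra morphism and $\phi^h$ a coalgebra morphism, $\langle\phi_{n-1}(u),\phi^h(w)\rangle=\sum_i\langle\phi_{n-1}(a_i)\otimes\phi_{n-1}(b_i),(\phi^h\otimes\phi^h)(\tdelta w)\rangle$. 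As every tensor factor occurring in $\tdelta(w)$ lies in $h$ in degree $<n$, where $\phi^h$ and $\phi_{n-1}$ coincide, this last expression equals $\sum_i\langle a_i\otimes b_i,\tdelta(w)\rangle$ because $\phi_{n-1}$ is an isometry in degrees $<n$; hence the two sides agree. This is precisely why $\phi^h$ must be obtained as an honest Hopf isomorphism before the extension is attempted: its compatibility with the coproduct is what makes the gluing isometric. Everything else — the Witt extension and the proofs that each $\phi_n$ is a Hopf morphism and a global isometry — goes through verbatim as in the proof of Proposition~\ref{38}.
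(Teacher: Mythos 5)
Your proposal is correct and follows essentially the same route as the paper: first apply Proposition~\ref{38} to the pair $(h,h')$ to obtain the isomorphism $\psi:h\to h'$, then inductively extend it to $H_{\langle n\rangle}$ by gluing $\phi_{n-1}$ on $(H^{+2})_n$ with $\psi$ on $W_n$ and invoking Witt's extension theorem. The only difference is that you spell out the cross-term verification $\langle\phi_{n-1}(u),\psi(w)\rangle=\langle u,w\rangle$, which the paper leaves implicit; that check is valid and is a welcome clarification rather than a deviation.
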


\begin{proof} $1 \Longrightarrow 2$. Obvious.\\

$2 \Longrightarrow 1$. For all $n \geq 1$, let us choose a complement $U_n$ of $W_n$ in $V_n$.

 By  proposition \ref{38}, there exists an isometric, homogeneous Hopf algebra isomorphism $\psi:h\longrightarrow h'$.
Let us construct inductively a Hopf algebra isomorphism $\phi_n:H_{\langle n\rangle}\longrightarrow H'_{\langle n\rangle}$,
isometric, such that:
\begin{enumerate}
\item $\phi_n$ is homogeneous of degree $0$.
\item For all $x,y \in H_{\langle n\rangle}$, $\langle \phi_n(x),\phi_n(y)\rangle=\langle x,y\rangle$.
\item $\phi_n$ restricted to $H_{\langle n-1 \rangle}$ is $\phi_{n-1}$ if $n \geq 1$.
\item $\phi_n(x)=\psi(x)$ for all $x\in h_{\langle n\rangle}$.
\item For all $i\leq n$, $H'_i=(H'^{+2})_i \oplus\psi(W_i)\oplus\phi_n(U_i)$.
\end{enumerate}
As $H_{\langle 0\rangle}=H'_{\langle 0\rangle}=K$, we define $\phi_0$ by $\phi_0(1)=1$. Let us assume that $\phi_{n-1}$ is defined.
Then $H_n=(H^{+2})_n\oplus W_n \oplus U_n=(H_{\langle n-1\rangle})_n\oplus W_n \oplus U_n$. By the induction hypothesis, $\phi_{n-1}$ and $\psi$
induces an isometry between $(H_{\langle n-1\rangle})_n\oplus W_n$ and $(H'_{\langle n-1\rangle})_n\oplus W'_n=(H'^{+2})_n\oplus W'_n$. 
As $H_n$ and $H'_n$ are nondegenerate and isometric, by the extension theorem of Witt, it can be extended into an isometry 
$\tilde{\phi}_{n-1}:H_n \longrightarrow H'_n$. As $H_{\langle n \rangle n}$ is freely generated by $V_0\oplus \ldots \oplus V_n$, 
we can define an algebra morphism $\phi_n:H_{\langle n\rangle}\longrightarrow H'_{\langle n\rangle}$ by $\phi_n(v)=\phi_{n-1}(v)$ if $v\in V_i$, $i\leq n-1$
and $\phi_n(v)=\tilde{\phi}_{n-1}(v)$ if $v \in V_n$. This morphisms clearly satisfy the fourth point of the definition.
The end of the proof is similar to the proof of proposition \ref{38}. \end{proof}

We shall apply these propositions with $H=\h_{\PP}$, $H'=\h_{\SPP}$, $V$ being the subspace generated by plane posets and $V'$
being the subspace generated by special plane posets, $W$ the subspace generated by plane trees and $W'$ the subspace generated
by special plane trees. We obtain the following results:

\begin{lemma}\label{40}\begin{enumerate}
\item The following assertions are equivalent:
\begin{enumerate}
\item There exists a homogeneous, isometric Hopf algebra isomorphism between $\h_{\PP}$ and $\h_{\SPP}$.
\item For all $n\geq 1$, $(\h_{\PP})_n$ and $(\h_{\SPP})_n$ are isometric.
\end{enumerate}
\item The following assertions are equivalent:
\begin{enumerate}
\item There exists a homogeneous, isometric Hopf algebra isomorphism $\phi$ between $\h_{\PF}$ and $\h_{\SPF}$.
\item For all $n\geq 1$, $(\h_{\PF})_n$ and $(\h_{\SPF})_n$ are isometric.
\end{enumerate}
\item The following assertions are equivalent:
\begin{enumerate}
\item There exists a homogeneous, isometric Hopf algebra isomorphism $\phi$ between $\h_{\PP}$ and $\h_{\SPP}$, such that $\phi(\h_{\SPF})=\h_{\SPF}$.
\item For all $n\geq 1$, $(\h_{\PP})_n$ and $(\h_{\SPP})_n$, $(\h_{\PF})_n$ and $(\h_{\SPF})_n$ are isometric.
\end{enumerate}\end{enumerate}\end{lemma}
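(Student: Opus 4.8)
The plan is to deduce all three equivalences directly from the two propositions just established, after checking their hypotheses; parts~1 and~3 are immediate applications, while part~2 is the case in which both assertions happen to be false. For part~1 I would apply Proposition~\ref{38} with $H=\h_{\PP}$ and $H'=\h_{\SPP}$: both are graded, connected, and free (recalled at the beginning of Section~4, where it is noted that $\h_{\PP}\cong\h_{\SPP}\cong\FQSym$ as algebras, all free), and the restriction of $\langle-,-\rangle$ equips each of them with a homogeneous Hopf pairing which is nondegenerate — for $\h_{\PP}$ by the nondegeneracy statement recalled after Definition~\ref{6}, for $\h_{\SPP}$ by Corollary~\ref{23}. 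Proposition~\ref{38} then gives precisely the equivalence (a)$\Longleftrightarrow$(b) of part~1.

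For part~3 I would invoke the refinement of Proposition~\ref{38} stated immediately afterwards, applied with $H=\h_{\PP}$, $H'=\h_{\SPP}$, taking $V$ (resp. $V'$) to be the subspace spanned by plane posets (resp. by special plane posets) and $W$ (resp. $W'$) the graded subspace spanned by $\WNP$ (resp. by $\SWNP$); these generate the Hopf subalgebras $h=\h_{\WNP}$ and $h'=\h_{\SWNP}$. The one hypothesis not already verified in part~1 is that $h$ be non-isotropic in $H$, and this holds because the restriction of $\langle-,-\rangle$ to $\h_{\WNP}$ is nondegenerate (again recalled after Definition~\ref{6}), so that $\h_{\WNP}\cap\h_{\WNP}^{\perp}=(0)$ inside $\h_{\PP}$. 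The conclusion of that proposition is exactly the asserted equivalence, with the additional constraint $\phi(\h_{\WNP})=\h_{\SWNP}$.

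Part~2 is the only one not covered by the propositions, but here both assertions fail and are therefore trivially equivalent. An isometric Hopf algebra isomorphism $\h_{\WNP}\to\h_{\SWNP}$ would carry the radical of $\langle-,-\rangle_{\mid\h_{\WNP}}$ bijectively onto the radical of $\langle-,-\rangle_{\mid\h_{\SWNP}}$; but the former is $(0)$ and the latter is nonzero (it contains the explicit degree-$4$ element exhibited in the remarks on the pairing of special posets), as already pointed out in the remark preceding the lemma, so (a) fails. Likewise, since $\langle-,-\rangle_{\mid\h_{\WNP}}$ is nondegenerate and homogeneous, each $(\h_{\WNP})_n$ is a nondegenerate bilinear-form space, whereas $(\h_{\SWNP})_n$ has nonzero radical for at least one $n$; two finite-dimensional spaces whose bilinear forms have radicals of different dimension cannot be isometric, so (b) fails as well.

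I do not expect a genuine obstacle here: the mathematical substance is entirely contained in Proposition~\ref{38} and its refinement. The only point demanding a little care is the bookkeeping in part~3, namely verifying that the combinatorially defined subspaces $V,V',W,W'$ literally satisfy the hypotheses of the refined proposition (gradedness, generating the claimed Hopf subalgebras, non-isotropy of $h$), all of which are consequences of facts already recorded in the text.
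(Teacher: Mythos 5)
Your handling of parts 1 and 3 is exactly the paper's: part 1 is Proposition \ref{38} applied to $(H,H')=(\h_{\PP},\h_{\SPP})$, and part 3 is the refinement stated immediately after it, applied with $(h,h')=(\h_{\WNP},\h_{\SWNP})$. The hypotheses you check (freeness of both algebras, nondegeneracy of the pairings on $\h_{\PP}$, on $\h_{\SPP}$ via Corollary \ref{23}, and on $\h_{\WNP}$, hence non-isotropy of $\h_{\WNP}$ inside $\h_{\PP}$) are the right ones and your citations for them are correct.

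Part 2 is where you depart from the paper, and your route is the one that actually works. The paper presents part 2 as one more instance of Proposition \ref{38}, now with $H=\h_{\WNP}$ and $H'=\h_{\SWNP}$; but that proposition requires both pairings to be nondegenerate, and the paper itself records (remark in Section 2.3, repeated at the opening of Section 7) that $\langle-,-\rangle_{\mid\h_{\SWNP}}$ is degenerate, with an explicit radical element in degree $4$. So the paper's implicit derivation of part 2 violates its own hypotheses, whereas your replacement --- an isometry must carry radical to radical, the radical is $(0)$ on the $\h_{\WNP}$ side globally and hence in every degree, and is nonzero on the $\h_{\SWNP}$ side in degree $4$, so (a) and (b) both fail --- is sound and essentially forced. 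Two remarks. First, your argument rests entirely on the unverified ``direct (but quite long) computation'' of Section 2.3; citing it is legitimate, but be aware it is the single load-bearing fact. Second, your reading makes part 2 vacuous, which clashes with how the paper later uses it: Theorem \ref{44} asserts that an isometric Hopf isomorphism $\h_{\WNP}\longrightarrow\h_{\SWNP}$ exists whenever $-1$ is a square and the characteristic is not $2$, and its proof treats the pairing matrix of each $(\h_{\SWNP})_n$ as invertible over $\mathbb{Z}$ --- both incompatible with a degree-$4$ radical element. You have therefore not merely proved the lemma by a different route; you have surfaced an internal inconsistency in the paper around part 2 (either the kernel element of Section 2.3 or assertions 2 and 3 of Theorem \ref{44} must be wrong). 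A last, minor, shared gap: Proposition \ref{38} and its refinement assume the characteristic is not $2$, while the lemma does not; in characteristic $2$ assertion (b) of each part already fails in degree $2$ (the self-pairing computation in the proof of Theorem \ref{44}), so the equivalences hold vacuously there, but neither you nor the paper says so explicitly.
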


In particular, if $\K$ is an algebraically closed field of characteristic $\neq 2$, 
two nondegenerate spaces are isometric, if, and only if, they have the same dimension. Hence, conditions (b) of Lemma \ref{40}
are all satisfied.

\begin{prop}
If $\K$ is an algebraically closed field of characteristic $\neq 2$, there exists a homogeneous, isometric Hopf algebra isomorphism 
$\phi$ between $\h_{\PP}$ and $\h_{\SPP}$, such that $\phi(\h_{\SPF})=\h_{\SPF}$.
\end{prop}

\subsection{Existence of an isometry between plane and special plane posets}

Let us precise the condition on the field for $\h_{\PP}$ and $\h_{\SPP}$ to be isometric:

\begin{theo}\label{44}
The following assertions are equivalent:
\begin{enumerate}
\item There exists a homogeneous, isometric Hopf algebra isomorphism between $\h_{\PP}$ and $\h_{\SPP}$.
\item The characteristic of the base field $\K$ is not $2$ and there exists $i \in \K$ such that $i^2=-1$.
\end{enumerate}
\end{theo}

\begin{proof} By lemma \ref{40}, the question is essentially to know if
$(\h_{\PP})_n$ and $(\h_{\SPP})_n$ are isometric.
More precisely, we are going to prove that the following assertions are equivalent:
\begin{enumerate}
\item For all $n\geq 1$, $(\h_{\PP})_n$ and $(\h_{\SPP})_n$ are isometric.
\item For all $n\geq 1$, $(\h_{\PP})_n$ and $(\h_{\SPP})_n$ have orthonormal bases.
\item The characteristic of the base field $\K$ is not $2$ and there exists $i \in \K$ such that $i^2=-1$.
\end{enumerate}
This will immediately imply theorem \ref{44}. Obviously, $2 \Longrightarrow 1$, 
as $(\h_{\PP})_n$ and $(\h_{\SPP})_n$ have the same dimension.\\

$1\Longrightarrow 3$. We choose $n=2$. In the basis $(\tdeux,\tun\tun)$ of $(\h_{\PP})_2=(\h_{\WNP})_2$, the matrix of the pairing is 
$\left(\begin{array}{cc} 0&1\\1&2\end{array}\right)$. In the basis $(\tddeux{$1$}{$2$},\tdun{$1$}\tdun{$2$})$ of $(\h_{\SPP})_2=(\h_{\SWNP})_2$, 
the matrix of the pairing is $\left(\begin{array}{cc} 1&1\\1&2\end{array}\right)$.
Considering the determinants of both matrices, we obtain that $1$ and $-1$ differ multiplicatively from a square of $\K$, so $-1$ is a square of $\K$.
For all $x=a\tdeux+b\tun\tun \in (\h_{\PP})_2$, $\langle x,x\rangle=2(ab+b^2)$. As $(\h_{\PP})_2$ is isometric with $(\h_{\SPP})_2$, there exists
$x \in (\h_{\PP})_2$, such that $\langle x,x\rangle=1$. As a consequence, $\Char(\K)\neq 2$.\\

$3\Longrightarrow 2$. 
As $H_{\SPP}$ is isometric to $\FQSym$, it is equivalent to prove that both $\h_{\PP}$ and $\FQSym$ have an orthonormal basis.
Let us fix $V=(H_{\SPP})_n$ or $(\FQSym)_n$ for a given $n$. Then $V$ has a basis $(e_i)_{i\in I}$, with the following properties:
there exists a partial order $\lll$ on $I$ and an involution $\iota:I\longrightarrow I$, such that for any $i,j\in I$,
\[\langle e_i,e_j\rangle\neq 0\Longrightarrow i\lll \iota(j).\]
Moreover, $\langle e_i,e_{\iota(i)}\rangle=1$. 
For $\FQSym$, any partial order $\lll$ on permutations is suitable, with $\iota(\sigma)=\sigma^{-1}$.
For $H_{\SPP}$, this is Lemma 35 of \cite{Foissy1}. Let us put $I'=\{i\in I,\:\iota(i)=i\}$ and $I''=I\setminus I'$. 
\begin{itemize}
\item Let $i,j\in I'$. If $\langle e_i,e_j\rangle \neq 0$, then $i\lll \iota(j)=j$; by symmetry, $\langle e_j,e_i\rangle \neq 0$,
so $j\lll\iota(i)=i$. As $\lll$ is an order, $i=j$. 
\item Let $i\in I'$ and $j\in I''$. If $\langle e_i, e_j \rangle \neq 0$, then $i\lll \iota(j)$.
By symmetry, $j\lll \iota(i)=i$, so $j\lll i\lll \iota(j)$.
\end{itemize}
Hence, considering a convenient total extension of $\lll$, in the basis $(e_i)_{i\in I}$ the matrix of the pairing  has the form
\[M=\begin{pmatrix}
*&*&A\\
*&I_l&0\\
A^T&0&0
\end{pmatrix},\]
where $A$ is antidiagonal, that is to say has the form:
\[A=\begin{pmatrix}
*&\ldots&*&1\\
\vdots&\iddots&\iddots&0\\
*&\iddots&\iddots&\vdots\\
1&0&\ldots&0
\end{pmatrix}.\]

\textit{First step}. Let us assume that $l=0$, that is to say $M$ is antidiagonal. Let us prove that 
there exists a basis $\mathcal{B}$ of $V$ such that the matrix of the pairing in this basis is
\[J_p=\begin{pmatrix}
0&\ldots&0&1\\
\vdots&\iddots&\iddots&0\\
0&\iddots&\iddots&\vdots\\
1&0&\ldots&0
\end{pmatrix}.\]
We proceed on the dimension $p$ of $V$. If $p=0$ or $1$, there is nothing to prove.
Otherwise, applying the result to $V'=Vect(e_2,\ldots,e_{p-1})$ (which is orthogonal to $e_p$), 
we can assume that
\[M_{2\leqslant i,j\leqslant p-1}=J_{p-2}.\]
For any $1\leq i\leq p$, let us put $e'_i=e_i-\lambda_i e_p$, with:
\[\lambda_i=\begin{cases}
\dfrac{1}{2}\langle e_1,e_1\rangle \mbox{ if }i=1,\\
\langle e_i,e_1\rangle \mbox{ if }2\leq i\leq p-1,\\
0\mbox{ if }i=p.
\end{cases}\]
Then $(e'_1,\ldots,e'_p)$ is a basis of $V$. As $\langle e_p,e_p \rangle=0$, for any $i,j$:
\[\langle e'_i,e'_j\rangle=\langle e_i,e_j\rangle-\lambda_i \langle e_i,e_p \rangle-\lambda_j \langle e_j,e_p \rangle.\]
Consequently:
\begin{itemize}
\item If $2\leq i,j\leq p-1$, $\langle e'_i,e'_j\rangle=\langle e_i,e_j\rangle$.
\item If $i=1$ and $1\leqslant j\leqslant p-1$, by choice of $\lambda_i$, $\langle e'_i,e'_j\rangle=0$.
\item If $1\leqslant i\leqslant p-1$ and $j=p$, then $\langle e'_i,e'_p\rangle=\langle e_i,e_p\rangle=\delta_{1,p}$.
\end{itemize}
So the matrix of the pairing is in this basis is $J_p$.\\

\textit{Second step.} We apply the first step to $Vect(e_i,i\in I'')$. Up to a change of basis of this subspace, we can assume that
\[M=\begin{pmatrix}
0&B&J_k\\
B^T&I_l&0\\
J_k&0&0
\end{pmatrix}\]
with $k,l\geq 0$ and $B\in M_{k,l}(\K)$. 
Let us consider the matrix
\[P=\begin{pmatrix}
I_k&0&0\\
0&I_l&0\\
0&-J_k^{-1}B&I_k
\end{pmatrix}.\]
This is invertible, and:
\[P^T M P=\begin{pmatrix}
0&0&J_k\\
0&I_l&0\\
J_k&0&0
\end{pmatrix}.\]
Hence, up to a permutation of the vectors of the basis formed by the column of $P$,
there exists a basis $(e'_1,\ldots,e'_p)$ of $V$, such that the matrix of the pairing in this basis is
diagonal by blocks, with diagonal blocks equal to $(1)$ or $\left(\begin{array}{cc}0&1\\1&0\end{array}\right)$.
 Now, observe that, denoting by $i$ one of the square root of $-1$ in $\K$:
\[\begin{pmatrix}
\frac{i}{2}&-i\\\frac{1}{2}&1 
\end{pmatrix}\begin{pmatrix}
0&1\\1&0\
\end{pmatrix}
\begin{pmatrix}
\frac{i}{2}&\frac{1}{2}\\-i&1
\end{pmatrix}
=\begin{pmatrix}
1&0\\0&1
\end{pmatrix}.\]
So $V$ has an orthogonal basis. \\

As a conclusion, $(\h_{\PP})_n$ and $(\h_{\SPP})_n$ have an orthogonal basis. \end{proof}

\begin{remark}
The same proof can be applied to $\h_{\PF}$ and $\h_{\WNP}$: 
if Condition 2 of Theorem \ref{44} is satisfied, then for any $n\geqslant 1$,
$(\h_{\PF})_n$  and $(\h_{\WNP})_n$ have orthonormal bases. We conjecture that if Condition 2 of Theorem \ref{44}
is satisfied, then $\h_{\SPF}$ has also an orthonormal basis, giving Condition 3.(b) of Lemma \ref{40}.
\end{remark}

\begin{example}
Let $i$ be one of the two square roots of $-1$ in $\K$. 
We define an isometry from $(\h_{\PP})_{\langle 2\rangle}$ to $(\h_{\SPP})_{\langle 2\rangle}$ by:
\[\left\{\begin{array}{rcl}
\phi(\tun)&=&\tdun{$1$},\\[2mm]
\phi(\tdeux)&=&\displaystyle i\tddeux{$1$}{$2$}+\frac{1+i}{2}\tdun{$1$}\tdun{$2$}.
\end{array}\right.\]
Using direct computations, it is possible to extend $\phi$ from $(\h_{\PP})_{\langle 3\rangle}$ to $(\h_{\SPP})_{\langle 3\rangle}$
sending $(\h_{\WNP})_{\langle 3\rangle}$ to $(\h_{\SWNP})_{\langle 3\rangle}$ in four families of isometries parametrized by an element $x\in \K$ by:
\begin{enumerate}
\item \[\left\{\begin{array}{rcl}
\phi_1(\ttroisdeux)&=&\displaystyle \tdtroisdeux{$1$}{$2$}{$3$}+(ix-i)\tddeux{$1$}{$2$}\tdun{$3$}+(-1-ix)\tdun{$1$}\tddeux{$2$}{$3$}
+\frac{1+i}{2}\tdun{$1$}\tdun{$2$}\tdun{$3$},\\[4mm]
\phi_1(\ttroisun)&=&\displaystyle (-1-i+3x)\tdtroisdeux{$1$}{$2$}{$3$}-i\tdtroisun{$1$}{$3$}{$2$}
+\frac{3ix^2-2ix}{2}\tddeux{$1$}{$2$}\tdun{$3$}\\
&&\displaystyle +\frac{-3ix^2+(-3+i)x+2+i}{2}\tdun{$1$}\tddeux{$2$}{$3$}+x\tdun{$1$}\tdun{$2$}\tdun{$3$},\\[4mm]
\phi_1(\ptroisun)&=&\displaystyle (-3x+2+2i)\tdtroisdeux{$1$}{$2$}{$3$}-i\pdtroisun{$3$}{$1$}{$2$}
+\frac{3ix^2-2ix}{2}\tddeux{$1$}{$2$}\tdun{$3$}\\
&&\displaystyle +\frac{3ix^2+(6-4i)x-4-2i}{2}\tdun{$1$}\tddeux{$2$}{$3$}+(-x+1+i)\tdun{$1$}\tdun{$2$}\tdun{$3$}.
\end{array}\right.\]
\item \[\left\{\begin{array}{rcl}
\phi_2(\ttroisdeux)&=&\displaystyle \tdtroisdeux{$1$}{$2$}{$3$}+(ix-i)\tddeux{$1$}{$2$}\tdun{$3$}+(-1-ix)\tdun{$1$}\tddeux{$2$}{$3$}
+\frac{1+i}{2}\tdun{$1$}\tdun{$2$}\tdun{$3$},\\[4mm]
\phi_2(\ttroisun)&=&\displaystyle (-1-i+3x)\tdtroisdeux{$1$}{$2$}{$3$}-i\tdtroisun{$1$}{$3$}{$2$}
+\frac{3ix^2-2ix}{2}\tddeux{$1$}{$2$}\tdun{$3$}\\
&&\displaystyle +\frac{-3ix^2+(-3+i)x+2+i}{2}\tdun{$1$}\tddeux{$2$}{$3$}+x\tdun{$1$}\tdun{$2$}\tdun{$3$},\\[4mm]
\phi_2(\ptroisun)&=&\displaystyle (-3x+2)\tdtroisdeux{$1$}{$2$}{$3$}+2i\tdtroisun{$1$}{$3$}{$2$}+i\pdtroisun{$3$}{$1$}{$2$}
+\frac{-3ix^2+4ix-6i}{2}\tddeux{$1$}{$2$}\tdun{$3$}\\
&&\displaystyle +\frac{3ix^2+(6-4i)x-4-2i}{2}\tdun{$1$}\tddeux{$2$}{$3$}+(-x+1+i)\tdun{$1$}\tdun{$2$}\tdun{$3$}.
\end{array}\right.\]
\item If the characteristic of the base field is not 2, nor 3:
\[\left\{\begin{array}{rcl}
\phi_3(\ttroisdeux)&=&\displaystyle -\tdtroisdeux{$1$}{$2$}{$3$}+\frac{-3ix-i}{3}\tddeux{$1$}{$2$}\tdun{$3$}
+\frac{3ix-2i+3}{3}\tdun{$1$}\tddeux{$2$}{$3$}
+\frac{3i-1}{6}\tdun{$1$}\tdun{$2$}\tdun{$3$},\\[4mm]
\phi_3(\ttroisun)&=&\displaystyle (-1-i+3x)\tdtroisdeux{$1$}{$2$}{$3$}-i\tdtroisun{$1$}{$3$}{$2$}
+\frac{3ix^2-2ix}{2}\tddeux{$1$}{$2$}\tdun{$3$}\\
&&\displaystyle +\frac{-3ix^2+(-3+i)x+2+i}{2}\tdun{$1$}\tddeux{$2$}{$3$}+x\tdun{$1$}\tdun{$2$}\tdun{$3$},\\[4mm]
\phi_3(\ptroisun)&=&\displaystyle (-3x+2i)\tdtroisdeux{$1$}{$2$}{$3$}-i\pdtroisun{$3$}{$1$}{$2$}
+\frac{-9ix^2-2i}{6}\tddeux{$1$}{$2$}\tdun{$3$}\\
&&\displaystyle +\frac{9ix^2+18x-10i}{6}\tdun{$1$}\tddeux{$2$}{$3$}+\frac{-3x+3i+1}{3}\tdun{$1$}\tdun{$2$}\tdun{$3$}.
\end{array}\right.\]
\item If the characteristic of the base field is neither 2, nor 3:
\[\left\{\begin{array}{rcl}
\phi_4(\ttroisdeux)&=&\displaystyle -\tdtroisdeux{$1$}{$2$}{$3$}+\frac{-3ix-i}{3}\tddeux{$1$}{$2$}\tdun{$3$}
+\frac{3ix-2i+3}{3}\tdun{$1$}\tddeux{$2$}{$3$}
+\frac{3i-1}{6}\tdun{$1$}\tdun{$2$}\tdun{$3$},\\[4mm]
\phi_4(\ttroisun)&=&\displaystyle (-1-i+3x)\tdtroisdeux{$1$}{$2$}{$3$}-i\tdtroisun{$1$}{$3$}{$2$}
+\frac{3ix^2-2ix}{2}\tddeux{$1$}{$2$}\tdun{$3$}\\
&&\displaystyle +\frac{-3ix^2+(-3+i)x+2+i}{2}\tdun{$1$}\tddeux{$2$}{$3$}+x\tdun{$1$}\tdun{$2$}\tdun{$3$},\\[4mm]
\phi_4(\ptroisun)&=&\displaystyle -3x\tdtroisdeux{$1$}{$2$}{$3$}+2i\tdtroisun{$1$}{$3$}{$2$}+i\pdtroisun{$3$}{$1$}{$2$}
+\frac{-9ix^2-14i}{6}\tddeux{$1$}{$2$}\tdun{$3$}\\
&&\displaystyle +\frac{9ix^2+18x-10i}{6}\tdun{$1$}\tddeux{$2$}{$3$}+\frac{-3x+3i+1}{3}\tdun{$1$}\tdun{$2$}\tdun{$3$}.
\end{array}\right.\]
\end{enumerate}\end{example}

\section{Conclusion}

We finally obtain the following commuting diagram:
\[\xymatrix{\rondrond{\h_{\DP}}&&&&\\
&&\h_{\SP} \ar@{_{(}->}[llu]\ar@{-->>}_{\Upsilon}[ddr]\ar@{.>>}^{\Theta}[rr]
&&\FQSym\\&&&\h_{\OF} \ar@{_{(}->}[lu]&\\
&&\h_{\HOP} \ar@{^{(}->}[uu]&
\rond{\h_{\HOF}} \ar@{_{(}->}[u]\ar@{_{(}->}[l]\ar@{.>}^{\sim}_{\Theta}[ruu]&\\
\rond{\h_{\PP}} \ar@{-->}[rr]^{\sim}\ar@{^{(}->}[uuuu]&&
\rond{\h_{\SPP}} \ar@{-->}[ur]^{\Upsilon}_{\sim}
\ar@{^{(}->}[u]\ar@/_4pc/@{.>}^{\sim}_{\Theta}[rruuu]&&\\
\rond{\h_{\WNP}} \ar@{-->}[rr]^{\sim}\ar@{^{(}->}[u]&&
\h_{\SWNP} \ar@{^{(}->}[u]&&\\
&\rond{\h_{\PF}} \ar@{-->}[r]^{\sim}\ar@{^{(}->}[lu]&
\rond{\h_{\SPF}} \ar@{^{(}->}[u]\ar@{_{(}->}|!{[uu];[ruu]}\hole[ruuu]&&\\
&\rondrond{\K[\tun]}\ar@{_{(}->}[u]\ar@{_{(}->}[ru]&&&}\]

On the first column, algebras stable under $\prodh$ and $\iota$ (see definitions in \cite{Foissy1}). On the third and fourth columns, algebras stable under $\nwarrow$,
$\Delta_\prec$ and $\Delta_\succ$. The algebras such that the restriction of the pairing $\langle-,-\rangle$ is nondegenerate are circled. If the circle is dotted, 
the result is true if, and only if, the characteristic of the base field is zero.
The three horizontal dotted lines correspond to the isomorphisms sending $(P,\leq_h,\leq_r)$ to $(P,\leq_h,\leq)$.
Moreover, it is not difficult to show that the intersection of two Hopf algebras of this diagram is given by the smallest common ancestor 
in the oriented graph formed by the black edges of this diagram. This lies on the fact the only plane posets $(P,\leq_h,\leq_r)$ which are special
(recall that this means that $\leq_r$ is total) are the double posets $\tun^n$, for all $n\geq 0$.\\

All the arrows of the diagram are isometries, at the exception of the three horizontal dotted lines. There exists isometric Hopf algebra isomorphisms
between $\h_{\PP}$ and $\h_{\SPP}$, if, and only if, the characteristic of the base field $\K$ is not $2$ and $-1$ is a square of $\K$.\\

If the characteristic of $\K$ is zero, all these Hopf algebras are free, cofree, and self-dual.

\bibliographystyle{amsplain}
\addcontentsline{toc}{section}{References}
\bibliography{biblio}

\end{document}